\theoremstyle{plain}
\newtheorem{thm}{Theorem}[]
\newtheorem{prop}[thm]{Proposition}
\newtheorem{cor}[thm]{Corollary}
\newtheorem{lem}[thm]{Lemma}
\newtheorem{conj}[thm]{Conjecture}
\newcommand{\comm}[1]{}
\def\l{{\lambda}}
\theoremstyle{definition}
\newtheorem{defi}[thm]{Definition}
\theoremstyle{remark}
\newtheorem{rem}[thm]{Remark}
\newtheorem{exas}[thm]{Examples}
\newtheorem{exa}[thm]{Example}
\begin{document}
\title{Categorified $\mathfrak{sl}_N$ invariants of colored rational tangles}
\author{Paul Wedrich}
\address{Centre for Mathematical Sciences, University of Cambridge, CB3 0WB, England}
\email{P.Wedrich@dpmms.cam.ac.uk}
\begin{abstract}
We use categorical skew Howe duality to find recursion rules that compute categorified $\mathfrak{sl}_N$ invariants of rational tangles colored by exterior powers of the standard representation. Further, we offer a geometric interpretation of these rules which suggests a connection to Floer theory. Along the way we make progress towards two conjectures about the colored HOMFLY homology of rational links.
\end{abstract}
\maketitle
\setcounter{page}{1}



\section{Introduction and statement of results}

Reshetikhin and Turaev \cite{ReT} define invariants of framed oriented tangles (and thus also knots and links) with components labelled (ˋcolored´) by irreducible representations of a semi-simple Lie algebra. Starting from the work of Khovanov \cite{Kh1} and Khovanov-Rozansky \cite{KR1} on case of $\mathfrak{sl}_N$ with the standard representation, much of the $\mathfrak{sl}_N$ package of Reshetikhin-Turaev invariants has been categorified using a variety of different methods, for a recent survey see e.g. \cite{Tur}. The best studied case is the one of fundamental (minuscule) $\mathfrak{sl}_N$ representations, i.e. the exterior powers $\Lambda^k$ of the standard representation. 

On the decategorified level it is well known that $\mathfrak{sl}_N$ Reshetikhin-Turaev invariants stabilize for $N\to \infty$ and hence can be interpreted as specializations of two-variable HOMFLY-type invariants via setting $a=q^N$. Khovanov-Rozansky's HOMFLY homology \cite{KR2}, \cite{Kh2} is a categorification of the HOMFLY polynomial and an extension to \emph{colored HOMFLY homology} with respect to labellings $\Lambda^k$ was developed by Mackaay, Sto\v{s}i\'c and Vaz \cite{MSV} and proved to be a link invariant by Webster and Williamson \cite{WW}.

Colored HOMFLY homology is poorly understood even for the simplest non-trivial knots and links. A good starting point to understand link homologies are rational knots and links, which have been proven to have particular simple uncolored $\mathfrak{sl}_N$ and HOMFLY homologies --- they are essentially determined by decategorified invariants. 
This paper is guided by two conjectures about the colored HOMFLY homology of rational knots and links.

\begin{conj} 
\label{conjA}
`Changing color on unknot components only shifts q-grading.'\\
Let $L$ be a rational two-component link with components colored by a fixed color $\Lambda^j$ and a variable color $\Lambda^i$ and let $HP_{j}(L,\Lambda^i)\in \mathbb{N}[a^{\pm 1}, t^{\pm 1}, q^{-1}][[q]]$ be the Hilbert-Poincar\'e series of the $\Lambda^i$-reduced $(\Lambda^i,\Lambda^j)$-colored HOMFLY homology of $L$.
 
Then there exist rational functions $F_{j}(L)\in \mathbb{Z}[a^{\pm 1}, s^{\pm 1}, t^{\pm 1}](q)$ such that for all $i\geq j$
\[HP_{j}(L,\Lambda^i) = F_{j}(L)(a, s= q^{i-j},t , q)\]
after expanding the right hand side into a power series in $q$.
\end{conj}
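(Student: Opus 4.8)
The plan is to run the skew Howe recursion of this paper with the variable color $i$ kept formal, and to show that $i$ can enter the resulting Poincar\'e series only through the combination $q^{i-j}$. \textbf{First}, present $L$ as the numerator closure of a colored rational tangle and fix a continued fraction expansion $[a_1,\dots,a_n]$ of its slope; the recursion rules then express the $\Lambda^i$-reduced $(\Lambda^i,\Lambda^j)$-colored $\mathfrak{sl}_N$ homology of $L$ as the closure of the iterated complex obtained by applying twist operators $T_{a_1},\dots,T_{a_n}$ to a fixed generator. The key structural observation is that, with $j$ and $N$ held fixed, every object and every matrix entry of this complex is built from the thick $\mathfrak{sl}_N$ calculus, whose structure constants --- $q$-binomials $\binom{i}{k}$, quantum integers $[i-\ell]$, and the like --- are Laurent polynomials in $q^i$ with integer coefficients. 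Writing $q^i = q^j s$ with $s = q^{i-j}$, the whole iterated complex is therefore defined over $\mathbb{Z}[s^{\pm1},q^{\pm1}]$, and it specializes at $s = q^{i-j}$ to the genuine complex computing the corresponding $\mathfrak{sl}_N$ homology for each $i \ge j$.

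\textbf{Next}, pass to the $N \to \infty$ limit of this recursion. By the stabilization $a = q^N$ and compatibility with the Mackaay--Sto\v{s}i\'c--Vaz colored HOMFLY homology, the limit computes the $\Lambda^i$-reduced $(\Lambda^i,\Lambda^j)$-colored HOMFLY homology of $L$, now over $\mathbb{Z}[a^{\pm1},s^{\pm1},t^{\pm1}](q)$. That $F_{j}(L)$ is rational in $q$ but only polynomial in $a,s,t$ should reflect the following: at finite $N$ the iterated complex is finite, so its homology is a Laurent polynomial; in the limit the categorified Jones--Wenzl projectors inside the colored strands become semi-infinite, and after reducing at the $\Lambda^i$-colored component the only surviving infinite tail is that of the projector on the \emph{unreduced}, fixed $\Lambda^j$-colored component, which contributes geometric-series denominators in $q$ alone. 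Reducing at the $\Lambda^i$-colored strand rather than the $\Lambda^j$-colored one also pins the normalization so that the natural scale of the answer is $q^j$, not an absolute $q^i$ --- which is the conceptual reason the relevant variable is $q^{i-j}$ and not $q^i$.

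\textbf{Finally}, one must show that taking homology of the generic complex over $\mathbb{Z}[a^{\pm1},s^{\pm1},t^{\pm1}](q)$ and then substituting $s = q^{i-j}$ reproduces $HP_{j}(L,\Lambda^i)$ for \emph{every} $i \ge j$. For $i \gg 0$ this is automatic: $q^{i-j}$ then avoids the finitely many loci where the ranks of the differentials could drop, and one already obtains a candidate rational function $F_{j}(L)$ recording the stable answer. \textbf{The main obstacle} is to prove that this same $F_{j}(L)$ remains correct all the way down to $i = j$. For small $i$ the differentials of the iterated complex can acquire extra cancellations --- when $q$-binomials $\binom{i}{k}$ vanish --- that do not occur generically, so a priori $HP_{j}(L,\Lambda^i)$ could deviate from the specialized $F_{j}(L)$ in the unstable range where $i$ is close to $j$. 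Ruling this out would require the special geometry of rational links, whose uncolored $\mathfrak{sl}_N$ and HOMFLY homologies are essentially determined by their decategorified invariants; one wants the analogous ``thinness'' for all colors, i.e.\ that the $\Lambda^i$-reduced colored homology of a rational link is exactly as thin as its Euler characteristic forces, uniformly in $i \ge j$. The recursion rules of this paper reduce Conjecture \ref{conjA} to such a uniform thinness statement, but do not by themselves establish it.
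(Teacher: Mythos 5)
The statement you were given to prove is, in fact, Conjecture \ref{conjA} --- the paper does \emph{not} prove it. The author states explicitly that ``we cannot prove these conjectures for rational knots and links,'' and only establishes two related results: Theorem \ref{thmA}, a chain-level statement about the Poincar\'e polynomials of colored HOMFLY \emph{complexes} of rational tangles (counting chain generators and their gradings, without computing homology), and Proposition \ref{colshiftprop}, the decategorified version of the conjecture for arbitrary links with an unknot component. So there is no ``paper's own proof'' against which to compare.

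With that caveat, your analysis is largely consonant with the paper's. The observation that the twist recursion can be run with $i$ formal and that all $i$-dependence is absorbed into $s=q^{i-j}$ is exactly the content of Theorem \ref{thmA} and its proof via the explicit twist rules in section \ref{twistrules}. Your identification of the obstruction --- that passing from the complex to its homology requires controlling induced differentials, hence some uniform-in-$i$ thinness statement that the recursion itself does not supply --- is also precisely why the statement remains a conjecture in the paper. You are candid that your proposal reduces Conjecture \ref{conjA} to that thinness statement rather than proving it, and the paper makes the same reduction implicitly (``A priori the rules \dots only compute the chain spaces \dots To get the full information \dots one would need to compute the induced differentials'').

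One substantive error in your narrative: you attribute the rationality in $q$ (equivalently the power-series rather than polynomial nature of $HP_j$) to ``semi-infinite categorified Jones--Wenzl projectors inside the colored strands.'' But the colors here are \emph{exterior} powers $\Lambda^k$, i.e.\ minuscule fundamental representations, for which no clasps/projectors are needed and the complexes stay bounded (the paper says this explicitly in the remark at the end of section \ref{categ}). The actual source of the $q$-denominators is the passage to the $a$-deformation of quantum binomials in closed web evaluations: in the decategorified computation in section \ref{1stproof} the closure of $UP[i,j,k]$ evaluates to products of ${b\brack c}_a = \prod_{k=1}^c \frac{aq^{b-k+1}-a^{-1}q^{-b+k-1}}{q^k-q^{-k}}$, whose denominators $\prod (q^k-q^{-k})$ are what force $F_j(L)$ into $\mathbb{Z}[a^{\pm1},s^{\pm1},t^{\pm1}](q)$ rather than a Laurent polynomial ring. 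Correcting this does not change your overall conclusion, but it misidentifies the mechanism.

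Finally, note that the paper's first decategorified argument (section \ref{1stproof}) is essentially the $t=-1$ shadow of your strategy restricted to rational links, and its second proof (section \ref{2ndproof}) via the HOMFLY skein of the annulus proves the more general Proposition \ref{colshiftprop}. Neither lifts to a proof of the homological Conjecture \ref{conjA}, for the same reason you give.
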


Conjecture 1 is due to Marko Sto\v{s}i\'c (private communication), see also \cite{GNSS}. 

\begin{conj}
\label{conjB}
`Homologies of higher colors are like powers of uncolored homology.'\\
Let $L$ be a rational knot or link. There exists a corrected $Q$-grading on reduced triply-graded $\Lambda^j$-colored HOMFLY homology with Poincar\'e polynomials $\tilde{P}_j(L)(a,Q,t)$, such that 
\[\tilde{P}_j(L)(a,Q,t)=(\tilde{P}_1(L)(a,Q,t))^j.\]
In particular, the ordinary Poincar\'e polynomials $P_j(L)(a,q,t)$ of $\Lambda^j$-colored HOMFLY homology satisfy
 \[P_j(L)(a,1,t)=(P_1(L)(a,1,t))^j.\] 
\end{conj}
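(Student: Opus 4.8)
The plan is to prove the statement by induction on the length of a continued-fraction expansion of the rational number classifying $L$, using the recursion rules for categorified colored $\mathfrak{sl}_N$ invariants of rational tangles that are the main technical output of this paper. Recall that every rational link is a numerator closure of a rational tangle $T$ assembled from the trivial tangle by an alternating sequence of horizontal and vertical twist regions. After $j$-cabling all strands, the $\Lambda^j$-colored homology of each intermediate tangle becomes an object of a homotopy category carrying a categorical quantum group action, and inserting a twist region amounts to applying a Rickard-type complex of divided-power functors $\mathsf{E}^{(r)},\mathsf{F}^{(r)}$. The base cases --- the $\Lambda^j$-colored unknot and the homology contributed by a single twist region, which are cohomologies of (partial) flag varieties --- are computed directly, and one checks that there a suitably corrected $Q$-graded Poincar\'e polynomial is literally the $j$-th power of the uncolored one.

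First I would extract the combinatorial skeleton: for rational $L$ the colored homology should be \emph{thin}, i.e.\ its generators are in canonical bijection with the monomials of the decategorified $\mathfrak{sl}_N$ invariant and all differentials cancel as the recursion is run. Granting thinness, I would build an explicit bijection between generators of the $\Lambda^j$-colored homology of $L$ and $j$-tuples of generators of the $\Lambda^1$-colored homology of $L$; on the decategorified level this reflects the fact that the relevant weight space of $\Lambda^j\otimes\Lambda^j$ ``looks like'' the $j$-th power of the relevant weight space of $\Lambda^1\otimes\Lambda^1$ under skew Howe duality. The corrected $Q$-grading is then forced by additivity: set the $Q$-degree of a tuple to be the sum of the $Q$-degrees of its entries, and verify that the $a$-degree and $t$-degree are likewise additive by bookkeeping the grading shifts contributed by each elementary twist along the recursion. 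This gives $\tilde P_j(L)=(\tilde P_1(L))^j$, and specializing $q=1$ (equivalently $Q=1$) collapses the $Q$-grading to yield $P_j(L)(a,1,t)=(P_1(L)(a,1,t))^j$.

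The main obstacle is exactly the thinness / no-cancellation input. The recursion rules determine the homology only up to homotopy equivalence, and iterated twisting could a priori produce generators that pair off and vanish, or that survive in a pattern incompatible with any product structure; ruling this out requires showing that at every stage of the recursion the complex representing the partially built colored tangle is already minimal in a suitable $\mathbb{Z}$-graded sense --- a parity/positivity argument that I expect to run cleanly for $j=1$ in general and only for small colors (or restricted families, e.g.\ two-bridge links with short continued fractions) for the colored statement. A secondary, more clerical difficulty is to pin down the $Q$-grading correction canonically and compatibly with the reduction conventions, so that the identity holds on the nose rather than up to an overall monomial factor; the $q=1$ specialization finesses this, which is why the weaker displayed consequence should be provable in considerably greater generality than the full conjecture.
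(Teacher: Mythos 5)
This statement is Conjecture~\ref{conjB}, which the paper explicitly does \emph{not} prove: the introduction states ``While we cannot prove these conjectures for rational knots and links, we make progress towards them.'' What the paper does establish is Theorem~\ref{thmB}, a weaker statement at the level of the colored HOMFLY \emph{complex} rather than its homology, asserting a bijection between generators of the $(\Lambda^i,\Lambda^j)$-colored complex of a rational \emph{tangle} and $j$-tuples of generators of the $(\Lambda^i,\Lambda^1)$-colored complex, with $a$, $s$, $t$, and a corrected $Q$-grading being additive on tuples. That theorem is proved not by directly tracking the algebraic twist rules as you propose, but by constructing the geometric model of Section~\ref{geom} (Theorem~\ref{mainthm}), where the generators are realized as tuples of intersections of symmetrized arcs with verticals and the grading bookkeeping becomes winding-number counting against three special points and the big diagonal.

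Your proposal correctly identifies the essential obstruction: without knowing that the complex carries no further cancellations (what you call thinness), the bijection between generators of chain spaces cannot be transported to a bijection between generators of homology. This is precisely where the paper stops --- as it says after Definition of the colored HOMFLY complex, ``In the general case of a $\mathfrak{sl}_N$ colored HOMFLY complex we find it difficult to compute induced differentials and thus restrict our attention to counting the number and grading shifts of basic objects in its chain spaces.'' Your claim that the $q=1$ specialization ``finesses'' the grading-correction issue is too optimistic: even at $q=1$ one still needs to know that the Euler characteristics of the homology in each $(a,t)$-bidegree agree with the chain-level count, which again requires controlling the differential (or an independent thinness/parity argument such as exists for uncolored Khovanov and $\mathfrak{sl}_N$ homology of alternating links). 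In short, the proposal is a reasonable outline for proving Theorem~\ref{thmB}, and it names the open gap accurately, but it does not prove Conjecture~\ref{conjB}, and neither does the paper.
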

This is a special case of the `refined exponential growth' conjecture in \cite{GGS}, where a similar behaviour is conjectured for rational knots and torus knots in the more general setting of labellings by rectangular Young diagrams. Moreover, there the correction $Q$ of the standard $q$-grading results from comparing two alternative homological gradings.\\

While we cannot prove these conjectures for rational knots and links, we make progress towards them by establishing related results for categorified $\mathfrak{sl}_N$ invariants of colored rational tangles. Rational tangles $T(p,q)$ are built recursively, starting from a trivial two-strand tangle, by planar composition with crossings on the top and on the right, according to the continued fraction expansion of $\frac{p}{q}$. All rational links are closures of rational tangles. We give an example:
\[\text{The tangle } T(7,2)\text{ associated to }\frac{7}{2}=[3,2]\text{ is } \quad \vcenter{\hbox{\includegraphics[height=1.0cm, angle=0]{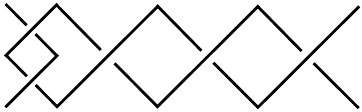}}} \]
Using the categorical skew Howe duality framework developed by Cautis, Kamnitzer and Licata \cite{Cau2}, \cite{CKL2} we study categorified $\mathfrak{sl}_N$ invariants of rational tangles with strands labelled by exterior powers of the standard representation. These invariants take values in a homotopy 2-category of chain complexes over categories that depend on the colors on the strands of the tangle, see section \ref{categ}. In section \ref{rattan} we demonstrate that for fixed colors $\Lambda^i$ and $\Lambda^j$ these invariants can be written as complexes with chain spaces being direct sums of grading shifts of a finite number of basic objects represented by webs (MOY graphs). We use results from \cite{Cau2} to compute explicit twist rules that describe how the invariant of a rational tangle changes under composing with a crossing on the top or on the right. The following example illustrates twist rules in the special case of Bar-Natan's geometric version of Khovanov homology for tangles \cite{BN}.

\begin{exa} The Khovanov homology of a positive rational tangle can be computed recursively by using the following twist rules which describe what happens to a generator of a chain space under tensoring with a crossing complex. We write top- and right twists as operators $T$ and $R$. 
\begin{align*}
  T(\vcenter{\hbox{\includegraphics[height=0.4cm, angle=0]{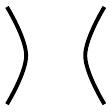}}})\quad&=\quad  \vcenter{\hbox{\includegraphics[height=0.4cm, angle=0]{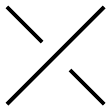}}} \quad=\quad 0\to  q t^{-1}~ \vcenter{\hbox{\includegraphics[height=0.4cm, angle=90]{KUP.pdf}}} \to  \vcenter{\hbox{\includegraphics[height=0.4cm, angle=0]{KUP.pdf}}}\to 0\\
  T(\vcenter{\hbox{\includegraphics[height=0.4cm, angle=90]{KUP.pdf}}})\quad&=\quad \vcenter{\hbox{\includegraphics[height=0.4cm, angle=90]{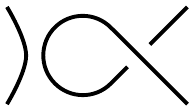}}}\quad =\quad 0\to  q t^{-1}~ \vcenter{\hbox{\includegraphics[height=0.4cm, angle=90]{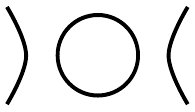}}} \to  \vcenter{\hbox{\includegraphics[height=0.4cm, angle=90]{KUP.pdf}}}\to 0 ~\quad\sim \quad 0 \to  q^2 t^{-1}~ \vcenter{\hbox{\includegraphics[height=0.4cm, angle=90]{KUP.pdf}}}\to 0\to 0\\ 
   R(\vcenter{\hbox{\includegraphics[height=0.4cm, angle=0]{KUP.pdf}}})\quad&=\quad \vcenter{\hbox{\includegraphics[height=0.4cm, angle=0]{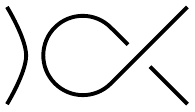}}}~=\quad 0\to  \vcenter{\hbox{\includegraphics[height=0.4cm, angle=0]{KUP.pdf}}} \to  q^{-1} t~ \vcenter{\hbox{\includegraphics[height=0.4cm, angle=0]{Kcirc.pdf}}}\to 0 \quad\sim\quad 0 \to 0\to q^{-2} t~ \vcenter{\hbox{\includegraphics[height=0.4cm, angle=0]{KUP.pdf}}}\to  0\\
R(\vcenter{\hbox{\includegraphics[height=0.4cm, angle=90]{KUP.pdf}}})\quad&=\quad \vcenter{\hbox{\includegraphics[height=0.4cm, angle=0]{Kcr.pdf}}}\quad=\quad 0\to  \vcenter{\hbox{\includegraphics[height=0.4cm, angle=90]{KUP.pdf}}} \to q^{-1} t~ \vcenter{\hbox{\includegraphics[height=0.4cm, angle=0]{KUP.pdf}}}\to 0
\end{align*}
Here all possible non-trivial differentials are given by saddle cobordisms and compared to \cite{BN} we use slightly different grading conventions that are more compatible with the colored $\mathfrak{sl}_N$ case treated in this paper. 
\end{exa}

To be more precise, a priori the rules in the above example only compute the chain spaces and the component of the differential coming from the last added crossing. To get the full information about the complex that is the tangle invariant, one would need to compute the induced differentials coming from previous crossings.

\begin{defi} The \emph{colored HOMFLY complex} of a colored rational tangle $T$ is the chain complex representing the categorified Reshetikhin-Tuarev $\mathfrak{sl}_N$ invariant of $T$ that is obtained recursively by applying the twist rules from section \ref{twistrules} term-wise and computing the induced differentials.
\end{defi}

In the general case of a $\mathfrak{sl}_N$ colored HOMFLY complex we find it difficult to compute induced differentials and thus restrict our attention to counting the number and grading shifts of basic objects in its chain spaces. The correct data structure is, therefore, the following modified notion of Poincar\'e polynomial.

\begin{defi}
The \emph{Poincar\'e polynomial} $\mathcal{P}^N_{i,j}(T)$ of the $(\Lambda^i,\Lambda^j)$-colored HOMFLY complex of a rational tangle $T$ with $i\geq j$ is an element of $\mathbb{N}[q^{\pm 1},t^{\pm 1}]\langle X_0,\dots, X_j\rangle $, the free $\mathbb{N}[q^{\pm 1},t^{\pm 1}]$ module (over a semiring) spanned by basic objects $ X_k$. It is the formal sum of the basic objects $X_k$ appearing in the chain spaces of the complex, weighted by powers of $t$ and $q$ indicating shifts in homological and $q$-grading respectively. The basic objects $X_k$ of weight $k$ are introduced in Definition \ref{BasicMOY}.
\end{defi}

\begin{exa}
Bar-Natan's complex for the $(2,k)$ torus tangle $T(k,1)$ is (with the grading convention introduced above):
\[ q^{2k-1}t^{-k}~ \vcenter{\hbox{\includegraphics[height=0.4cm, angle=90]{KUP.pdf}}} \to \cdots \to q^{3}t^{-2}~ \vcenter{\hbox{\includegraphics[height=0.4cm, angle=90]{KUP.pdf}}} \to q t^{-1}~ \vcenter{\hbox{\includegraphics[height=0.4cm, angle=90]{KUP.pdf}}} \to  \vcenter{\hbox{\includegraphics[height=0.4cm, angle=0]{KUP.pdf}}}\]
The Poincar\'e polynomial of this complex is $ \vcenter{\hbox{\includegraphics[height=0.4cm, angle=0]{KUP.pdf}}} + \sum_{i=1}^k q^{2i-1}t^{-i}~ \vcenter{\hbox{\includegraphics[height=0.4cm, angle=90]{KUP.pdf}}}$.
\end{exa}

The twist rules for the case of colored $\mathfrak{sl}_N$ invariants of rational tangles, which we compute in section \ref{twistrules}, stabilize in an obvious way for large $N$ and exhibit a very simple dependence on the higher color $i$, thus immediately implying the following theorem.

\begin{thm}\label{thmA} The $\mathfrak{sl}_N$ colored HOMFLY complexes of a positive rational tangle $T$ colored by representations $\Lambda^i$ and $\Lambda^j$ with $N\gg i\geq j$, depend essentially only on $j$ up to shifts in $q$-grading.
More precisely, there exist elements $\mathcal{P}_j(T)\in \mathbb{N}[a^{\pm 1},q^{\pm 1},s^{\pm 1},t^{\pm 1}]\langle X_0, \dots X_j \rangle $, such that:
\[\mathcal{P}^N_{i,j}(T) = \mathcal{P}_j(T)(a=q^N, q, s=q^{i-j}, t).\]
\end{thm}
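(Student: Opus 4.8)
The plan is to prove Theorem~\ref{thmA} as a direct consequence of the explicit twist rules established in section~\ref{twistrules}, by induction on the number of crossings in the continued fraction expansion of $T$. First I would set up the base case: the trivial two-strand tangle colored by $\Lambda^i$ and $\Lambda^j$ has a colored HOMFLY complex concentrated in homological degree zero whose chain space is a single basic object (up to a grading shift), and its Poincar\'e polynomial is manifestly of the claimed form $\mathcal{P}_j(T)(a=q^N,q,s=q^{i-j},t)$ --- indeed it is independent of $i$ after the substitution, or depends on $i$ only through an overall power of $s=q^{i-j}$. The key observation is that the basic objects $X_0,\dots,X_j$ are indexed only by a weight $k\le j$ and not by $i$, so the \emph{set} of basic objects that can appear is already $i$-independent; only their multiplicities and grading shifts are at stake.

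Next I would carry out the inductive step. Assume $\mathcal{P}^N_{i,j}(T)=\mathcal{P}_j(T)(a=q^N,q,s=q^{i-j},t)$ for some rational tangle $T$, and let $T'$ be obtained from $T$ by adding one positive crossing on the top or on the right. Applying the top-twist operator $\mathcal{T}$ or the right-twist operator $\mathcal{R}$ from section~\ref{twistrules} term-wise to each basic object $X_k$ appearing in $\mathcal{P}^N_{i,j}(T)$ produces, by those rules, a $\mathbb{N}[q^{\pm1},t^{\pm1}]$-linear combination of basic objects $X_{k'}$ with grading shifts. The crucial input --- which I would extract verbatim from the twist rules --- is that when these rules are written in the stable ($N\gg0$) normalization using the variables $a=q^N$ and $s=q^{i-j}$, the dependence on the larger color $i$ enters only through $s$ (and $a$), in a way uniform in $i$. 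Concretely, each twist rule expresses $\mathcal{T}(X_k)$ and $\mathcal{R}(X_k)$ as the same formal expression in $X_0,\dots,X_j$, $a$, $q$, $s$, $t$ regardless of $i$; specializing $s=q^{i-j}$ and $a=q^N$ recovers the genuine $(\Lambda^i,\Lambda^j)$-colored rule for that particular $N$ and $i$. Hence applying the abstract twist operator to $\mathcal{P}_j(T)$ and then specializing gives the same answer as first specializing and then applying the genuine twist, which by induction equals $\mathcal{P}^N_{i,j}(T')$. Defining $\mathcal{P}_j(T'):=\mathcal{T}(\mathcal{P}_j(T))$ or $\mathcal{R}(\mathcal{P}_j(T))$ closes the induction. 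Since the Poincar\'e polynomial only records chain spaces with their bigrading, the induced differentials play no role here and need not be tracked.

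The main obstacle is not logical but bookkeeping: one must verify that the twist rules of section~\ref{twistrules}, as derived from Cautis's results \cite{Cau2} on the action of Rickard complexes, genuinely have the uniform-in-$i$ shape claimed --- i.e. that the grading shifts appearing when a crossing acts on a basic object $X_k$ depend on $i$ only through the combination $q^{i-j}$ (after passing to $a=q^N$), with no residual $i$-dependence hiding in multiplicities or in the range of summation. This amounts to inspecting each rule and checking that the $\mathfrak{sl}_N$ weight-space combinatorics (the binomial-type coefficients and $q$-exponents coming from MOY relations) organize into expressions in $a$ and $s$; the stabilization for $N\gg0$ is exactly the statement that the $a=q^N$ substitution is injective on the relevant Laurent polynomials, so no collapse occurs. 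Once that inspection is done, the theorem follows formally, and I would remark that $\mathcal{P}_j(T)$ is well-defined independently of the chosen continued fraction expansion because the underlying colored HOMFLY complex is, by construction, a tangle invariant.
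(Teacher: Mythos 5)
Your proof is correct and follows essentially the same route as the paper: the paper states the twist rules in section~\ref{twistrules} and then simply remarks that their dependence on $N$ and $i$ is hidden by introducing $a=q^N$ and $s=q^{i-j}$, which proves Theorem~\ref{thmA}. You have merely spelled out the implicit induction on the number of crossings more explicitly, which is a fair reading of the same argument.
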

We, thus, sometimes consider $a$ and $s$ as two additional gradings of basic objects in the colored HOMFLY complex.

In section \ref{conj2} we show that in the decategorified setting a statement similar to Theorem \ref{thmA} holds for any link with an unknot component:
\begin{prop} \label{colshiftprop}
Let $L'=L \cup U$ be a link with an unknot component $U$ and some fixed coloring on the components of $L$. Then there exists $P^{st}(L',U)\in \mathbb{Z}[a^{\pm 1},s^{\pm 1}](q)$, such that for any $i\in \mathbb{N}$: $P^{st}(L',U)(a,s=q^i,q)$ is the colored HOMFLY polynomial of $L$ with color $\Lambda^i$ on $U$, reduced with respect to $\Lambda^i$.\\
We call $P^{st}(L',U)$ the \emph{color stable HOMFLY polynomial} of $L'$ with respect to the unknot component $U$
\end{prop}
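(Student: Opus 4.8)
The plan is to use the well-known cabling/projector formula for colored HOMFLY invariants together with the observation that the $\Lambda^i$-colored unknot, reduced with respect to $\Lambda^i$, has trivial (i.e. $1$) invariant, so that all the $i$-dependence is funneled through a single twist coefficient associated to the component $U$. First I would recall that the $\Lambda^i$-colored HOMFLY polynomial of a link component can be computed by inserting the $i$-th Jones-Wenzl-type projector (in the relevant Hecke-algebra/MOY-calculus sense) into $i$ parallel copies of that component; for the unknot component $U$ this projector is an idempotent whose only nontrivial effect, after reduction with respect to $\Lambda^i$, is to contribute a framing/twist eigenvalue each time $U$ passes a crossing with itself. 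Since $U$ is an unknot, it can be isotoped to have writhe (self-crossing number) zero, so in fact the cabling of $U$ contributes no self-crossings at all and the projector on $U$-cables acts trivially.

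Concretely, I would set up the state sum as follows. Represent $L' = L \cup U$ by a diagram in which $U$ is a round circle bounding a disk meeting $L$ transversally; the $\Lambda^i$-colored, $\Lambda^i$-reduced invariant of $L'$ is then a contraction of: (i) the already-fixed colored HOMFLY invariant data on $L$, which is independent of $i$; and (ii) a "transfer operator" obtained by threading the $i$-cable of $U$ through the punctures made by $L$. This transfer operator is an endomorphism of a tensor product of exterior-power representations, and standard skew Howe duality (as used throughout the paper, cf. \cite{Cau2}) identifies it with the action of an element of a quantized $\mathfrak{gl}_m$ that depends on $i$ only through the weight $\Lambda^i$. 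Its matrix entries, evaluated against the fixed MOY webs coming from $L$, are therefore Laurent polynomials in $a$ and in $q^i$ — the $q^i$ appearing precisely because exterior-power weights enter quantum-group formulas through quantum integers $[i], [i+1], \dots$ which are, up to monomials in $q$, functions of $q^i$. Collecting terms, the whole evaluation is a rational function in $q$ with coefficients in $\mathbb{Z}[a^{\pm 1}, s^{\pm 1}]$ after substituting $s = q^i$; this rational function is $P^{st}(L',U)$, and by construction $P^{st}(L',U)(a, s = q^i, q)$ recovers the $\Lambda^i$-colored invariant for every $i$.

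The main obstacle is making rigorous the claim that every occurrence of the label $i$ in the state sum is, after the reduction with respect to $\Lambda^i$, an occurrence of $q^i$ (equivalently $a$, via $a = q^N$) rather than a genuinely $i$-dependent combinatorial object such as a sum over $i$ partitions or an $i$-fold iterated braiding. The point to verify is that, because $U$ is an unknot with no framing anomaly, the projector and the self-braiding on the $U$-cable are absent, so the only surviving $i$-dependence is in: the quantum dimension-type normalization (killed by the $\Lambda^i$-reduction), and the eigenvalues of the $R$-matrix for crossings between a $\Lambda^i$-strand and a fixed-color strand of $L$. These eigenvalues, for a crossing between $\Lambda^i$ and $\Lambda^k$, are explicit monomials of the form $(-1)^{?} a^{?} q^{?}$ with exponents affine-linear in $i$ — hence functions of $q^i$. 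One must also check that the "fixed coloring on $L$" and the web-resolution of the tangle $L$ sees $U$ only through such crossings, which follows from isotoping $U$ to bound an embedded disk. Once this structural statement is in place, the rationality in $q$ and membership in $\mathbb{Z}[a^{\pm 1}, s^{\pm 1}](q)$ is automatic from the explicit quantum $R$-matrix formulas, and the proposition follows. I expect the write-up to lean on the decategorified shadow of the twist rules of section \ref{twistrules} specialized to the case where one of the twist-parameters corresponds to an unknotted, unframed component.
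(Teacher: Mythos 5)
Your geometric setup is the same as the paper's: isotope $U$ to a round circle, place $L$ in the complementary solid torus, and separate the invariant into $i$-independent data coming from $L$ and an $i$-dependent ``pairing with $U$.'' The paper formalizes this by writing the reduced invariant as $t_i(\pi(L))$, where $\pi(L)\in S(A)$ is the (fixed, $i$-independent) evaluation of $L$ in the HOMFLY skein of the annulus and $t_i\colon S(A)\to \mathbb{Z}[a^{\pm 1}](q)$ is the algebra homomorphism ``link with a $\Lambda^i$-colored meridian, divide by the $\Lambda^i$-unknot.'' So far the two approaches agree.

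The gap is in how you argue that the $i$-dependence stabilizes. You assert that the $R$-matrix coefficients for a $\Lambda^i$--$\Lambda^k$ crossing are monomials with exponents affine-linear in $i$, and that consequently membership in $\mathbb{Z}[a^{\pm 1},s^{\pm 1}](q)$ is ``automatic from the explicit $R$-matrix formulas.'' Neither claim survives scrutiny. First, the MOY resolution of a $\Lambda^i$--$\Lambda^k$ crossing is not a diagonal eigenvalue but a sum of $\min(i,k)+1$ webs whose \emph{edge labels} involve $i$ (e.g. $i-k+r$); after closing up, these labels feed into quantum binomials such as ${-k\brack i}_a$ or ${0\brack i}_a$ whose number of factors grows with $i$. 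Second, it is therefore not at all automatic that the $\Lambda^i$-reduction (division by ${0\brack i}_a$) produces a stable quantity; one needs a genuine $q$-binomial identity — compare the step ${-k\brack i}_a{0\brack j}_a={-k\brack j-k}_a{-i\brack k}_a{0\brack i}_a$ in the paper's first proof — which converts an $i$-many-factor product into a fixed-length product depending on $i$ only through $q^i$. Your write-up names this as ``the main obstacle'' but then dissolves it by assertion rather than proof. (Your opening remarks about the cabling projector contributing nothing because $U$ has zero writhe are also somewhat beside the point: the $i$-cable of $U$ still crosses $L$ in a number of crossings growing linearly in $i$, which is precisely the sort of uncontrolled $i$-growth you need to rule out; you implicitly switch to the uncabled $\Lambda^i$-strand picture, which is the right one, but then the issue above reappears.)

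The paper sidesteps all of this with a structural observation you don't use: $t_i$ is an algebra homomorphism and $S(A)$ is a polynomial algebra freely generated by the symmetric-power-colored unknots $d_j$, so it suffices to prove stability of $t_i(d_j)$ alone. That single family of evaluations is given by an explicit closed formula of Morton and Lukac, $t_i(d_j)=\langle d_j\rangle\,\frac{a-a^{-1}(q^j-q^{j-i}+q^{-i})}{a-a^{-1}}$, whose $i$-dependence is visibly through $q^i$. Without either this reduction-to-generators step or a hands-on proof of the requisite $q$-binomial identities, your argument does not establish the proposition.
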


In section \ref{geom} we propose a geometric algorithm for computing the colored HOMFLY complex of a colored rational tangle, which is very similar to Bigelow's geometric model for the Jones polynomial \cite{Big1}. The basis for this geometric algorithm is a special planar diagram $P$ of the tangle in a disc $D^2\subset \mathbb{C}$, displaying only one strand $\alpha$. In the case of colors $\Lambda^i$ and $\Lambda^j$ with  $i\geq j$ we establish a bijection between the generators of the colored HOMFLY complex and intersections of $Sym^j(\alpha)$ with certain half-dimensional submanifolds $V_h$ in $Sym^j(D^2)$. We further expect that in this picture differentials are realized as discs connecting intersection points with boundary on $V_h$ and $Sym^j(\alpha)$. Their gradings are determined by graded intersection numbers with certain divisors in $Sym^j(D^2)$. In particular, using the language from Conjecture \ref{conjB}, we identify the difference between the original $q$-grading and the corrected $Q$-grading as coming from intersection of such discs with the big diagonal in $Sym^j(D^2)$.
 \setlength\intextsep{0pt}
\begin{wrapfigure}[6]{r}{4.1cm}
\begin{center}
   \includegraphics[height=3cm]{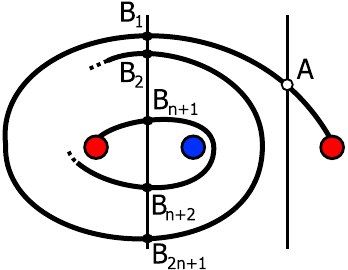}
   \end{center}
\end{wrapfigure} 

The diagram $P$ for the tangle $T(2n+1,1)$ is shown on the right. In the case of colors $(\Lambda^1, \Lambda^1)$ the generators of the colored HOMFLY complex are in bijection with the set $\{A, B_1,\dots, B_{2n+1}\}$. In the case of Bar-Natan's version of Khovanov homology for tangles, $A$ and $B_j$ are generators of type $\hbox{\includegraphics[height=0.4cm, angle=0]{KUP.pdf}}$ and $\hbox{\includegraphics[height=0.4cm, angle=90]{KUP.pdf}}$ respectively.\\
    
In Theorem \ref{mainthm} we prove that this geometric model observes the twist rules from section \ref{twistrules} and, hence, correctly computes the Poincar\'e polynomial of the colored rational tangle. As a corollary we get the following theorem. 

\begin{thm}
\label{thmB} The generators of the $(\Lambda^i,\Lambda^j)$-colored HOMFLY complex of a rational tangle with $i\geq j$ are in bijection with $j$-tuples of generators of the $(\Lambda^i,\Lambda^1)$-colored HOMFLY complex. The $a$-, $s$- and $t$-gradings of these $j$-tuples are additive, i.e. they are the sums of the $a$-, $s$- and $t$-gradings of the components. The $q$-grading, on the other hand, splits into an additive component $Q$ and a non-additive component $q-Q$. With respect to these gradings, colored HOMFLY complexes satisfy an analogue of Conjecture \ref{conjB}. 
\end{thm}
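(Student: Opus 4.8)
The plan is to deduce the theorem from the geometric model of Theorem~\ref{mainthm}, extracting the combinatorial and grading structure of the colored HOMFLY complex directly from the geometry of $Sym^j(D^2)$; the substantive input is already contained in Theorem~\ref{mainthm} and in the twist rules of section~\ref{twistrules}, so what remains is bookkeeping.

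First I would invoke Theorem~\ref{mainthm}, which identifies the generators of the $(\Lambda^i,\Lambda^j)$-colored HOMFLY complex with the transverse intersection points of $Sym^j(\alpha)$ with the half-dimensional submanifolds $V_h\subset Sym^j(D^2)$, and which computes the $a$-, $s$-, $t$- and $q$-gradings as graded intersection numbers with prescribed divisors. By construction the $V_h$ are assembled from the one-strand data entering the $(\Lambda^i,\Lambda^1)$ case, so a point of $Sym^j(\alpha)\cap V_h$ is recorded by a $j$-tuple of points of $\alpha$, each of which lies on a one-dimensional submanifold realizing a generator of the $(\Lambda^i,\Lambda^1)$-complex. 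This yields the asserted bijection, and the basic-object label of such a $j$-tuple is $X_k$ with $k$ the sum of the weights (each $0$ or $1$) of its components, so that indeed $X_k\in\langle X_0,\dots,X_j\rangle$.

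Next, for the gradings I would split the divisors appearing in the geometric model into two classes: those that are symmetrizations of divisors in $D^2$, and the big diagonal $\Delta\subset Sym^j(D^2)$. A graded intersection number with a symmetrized divisor is, by the very definition of the symmetric product, the sum over the $j$ marked points of the corresponding intersection number in $D^2$; applied to the divisors governing the $a$-, $s$- and $t$-gradings and to the divisor governing the additive part $Q$ of the $q$-grading, this shows that these four gradings of a $j$-tuple are the sums of the gradings of its components as $(\Lambda^i,\Lambda^1)$-generators. The remaining contribution $q-Q$ is precisely the part of the $q$-grading read off from intersections with $\Delta$; it depends on the pairwise configuration of the $j$ points, is therefore non-additive, and is exactly the correction by which the corrected $Q$-grading of Conjecture~\ref{conjB} differs from the standard $q$-grading. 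Along the way one uses Theorem~\ref{thmA} to know that all of this is organized by the four formal variables $a,q,s,t$, with the dependence on the larger color $i$ carried by $s$.

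Finally, the tangle analogue of Conjecture~\ref{conjB} is now formal: since the $a$-, $s$-, $t$- and $Q$-gradings as well as the basic-object weight are additive over $j$-tuples, the Poincar\'e polynomial $\tilde{\mathcal{P}}_j(T)$ recomputed in the $Q$-grading equals $\big(\tilde{\mathcal{P}}_1(T)\big)^{\,j}$, where basic objects are multiplied by $X_aX_b=X_{a+b}$ and no $X_k$ with $k>j$ occurs. The main obstacle is the grading bookkeeping of the previous paragraph: one must make precise which of the divisors in the geometric model are symmetrizations of $D^2$-divisors and which are diagonal contributions, verify that the resulting split $q=Q+(q-Q)$ is well defined and consistent with the twist rules already established in Theorem~\ref{mainthm}, and check that the non-additive piece is captured entirely by $\Delta$, with nothing left over in the additive part.
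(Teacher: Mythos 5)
Your proposal is correct and takes essentially the same route as the paper: Theorem~\ref{thmB} is stated there as an immediate consequence of Theorem~\ref{mainthm}, whose proof ends with the one-line deduction ``Then Theorem~\ref{thmB} follows immediately from the fact that the $a$-, $s$-, $t$- and $Q$-gradings are additive.'' You fill in what ``additive'' means geometrically --- the $a$-, $s$-, $t$- and $Q$-gradings come from divisors of the form $\{X^\pm\}\times Sym^{j-1}(D^2)$ and $\{Y\}\times Sym^{j-1}(D^2)$, which are symmetrizations of divisors in $D^2$ and hence contribute additively over the $j$ coordinates, while the correction $q-Q$ is the contribution of the big diagonal $\Delta$ and is manifestly pairwise, hence non-additive --- and this is exactly the language the paper itself uses in the Bigelow--Manolescu comparison (section~\ref{bigelow}), even though it is not spelled out in the proof of Theorem~\ref{thmB}. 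So the identification of the generators with $j$-tuples and the splitting $q = Q + (q-Q)$ are correct and are the content of the paper's argument.

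One caveat worth flagging: the concluding formula $\tilde{\mathcal P}_j(T) = \bigl(\tilde{\mathcal P}_1(T)\bigr)^j$ with the commutative rule $X_aX_b = X_{a+b}$ does not quite hold literally, because the bijection of Theorem~\ref{mainthm} is not with \emph{all} ordered $j$-tuples of $(\Lambda^i,\Lambda^1)$-generators but with tuples in $(w_1\cap\alpha)\times\cdots\times(w_h\cap\alpha)\times(u_1\cap\alpha)\times\cdots\times(u_{j-h}\cap\alpha)$, i.e.\ the weight-$1$ components always occupy the first $h$ slots. In the worked example of $T(3,1)$ with $j=2$ this gives $1+3+9=13$ generators, whereas a naive commutative square of the four $(\Lambda^i,\Lambda^1)$-generators would give $16=1+2\cdot 3+9$. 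So the $X_h$-coefficient of $\mathcal P_j$ is $p_1^{\,h}\,p_0^{\,j-h}$ rather than $\binom{j}{h}p_1^{\,h}\,p_0^{\,j-h}$; the multiplication on the $X_k$ must be understood as ordered (weight-descending) monomials, or equivalently the naive power must be divided by the multinomial factors before reading off coefficients. The paper's statement of Theorem~\ref{thmB} is equally informal on this point, so this is a clarification you would want to make rather than a flaw unique to your proof; apart from it, the argument is sound and matches the paper's.
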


There are obvious structural similarities between our geometric set-up and various Floer theoretic constructions. In particular we want to mention Manolescu's interpretation \cite{Man1} of Seidel-Smith symplectic Khovanov homology \cite{SeS} in terms of Bigelow's model for the Jones polynomial \cite{Big1}, see also section \ref{bigelow}. It is an interesting question whether it is possible to phrase our geometric algorithm in completely symplectic language and hence give an A-model realization of $\mathfrak{sl}_N$ link homologies for colored rational tangles. In the meantime we present a connection to link Floer homology that follows from the geometric model for the colored HOMFLY complex:
\begin{cor}
\label{LFH} Let $T$ be a rational tangle whose denominator closure is a two-component link $L$. Then the $(\Lambda^i,\Lambda^1)$-colored HOMFLY complex of $T$ computes the link Floer homology of $L$.
\end{cor}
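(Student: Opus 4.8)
The plan is to extract a Heegaard diagram for $(S^3,L)$ from the geometric model and to read off $\widehat{HFL}(L)$ from its generators and gradings. Recall from Theorem \ref{mainthm} that for $j=1$ the generators of the $(\Lambda^i,\Lambda^1)$-colored HOMFLY complex of $T$ are in bijection with the intersection points of the displayed strand $\alpha$ of the special planar diagram $P\subset D^2$ with the finite collection of curves $V_h$, and that for $j=1$ these are genuine half-dimensional curves in the surface $D^2$ itself, since $Sym^1(D^2)=D^2$ and $Sym^1(\alpha)=\alpha$. First I would cap this picture off: gluing a second disc along $\partial D^2$ so as to realize the arcs of the denominator closure of $T$ turns $\{\alpha\}\cup\{V_h\}$ into a multi-pointed genus-zero Heegaard diagram $\mathcal{H}$ whose basepoints are distributed according to the two components of $L$. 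Because $T$ is assembled from the trivial tangle by alternately adding top- and right-twists along the continued fraction of $p/q$, tracking how each twist modifies $\alpha$ and the $V_h$ is a finite and explicit computation; this is the same bookkeeping behind the standard bridge (plat) Heegaard diagrams of two-bridge links, and it lets one check that $\mathcal{H}$ is admissible and that its associated link is precisely the denominator closure $L$.

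Next I would set up the grading dictionary. The homological ($t$-)grading of the colored HOMFLY complex matches the Maslov grading on $\widehat{CFL}(\mathcal{H})$, while the prescription of section \ref{geom} that computes the $q$- and $a$-gradings as graded intersection numbers with divisors in $Sym^j(D^2)$ is, for $j=1$, exactly the recipe that computes the Alexander bigrading of link Floer homology as an intersection count of connecting domains with the two types of basepoints; by Theorem \ref{thmA} the dependence of the colored HOMFLY complex on the higher color $i$ is only an overall $q$-shift, which is consistent with the fact that $\widehat{HFL}(L)$ does not see $i$. Hence, after the regrading dictated by these divisor intersections, the bigraded generating function of the $(\Lambda^i,\Lambda^1)$-colored HOMFLY complex agrees with that of $\widehat{CFL}(\mathcal{H})$.

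To conclude I would invoke thinness. Since $L$ is rational it is alternating, so $\widehat{HFL}(L)$ is thin: it is supported along a single line in the (Maslov, Alexander) bigrading and is therefore determined by the multivariable Alexander polynomial together with the signature data of $L$. The previous two steps show that the generators of the $(\Lambda^i,\Lambda^1)$-colored HOMFLY complex and their gradings carry exactly this data, so the homology of the colored HOMFLY complex must coincide with $\widehat{HFL}(L)$ up to the regrading above --- which is all that is needed for the complex to ``compute'' $\widehat{HFL}(L)$. The hard part will be twofold. First, verifying that $\mathcal{H}$, built from the geometric model, really is a Heegaard diagram for $(S^3,L)$ carrying the asserted Maslov and Alexander gradings: this is a concrete but delicate reconciliation of the twist bookkeeping of section \ref{twistrules} with the domain-count grading formulas of link Floer homology. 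Second, a chain-level refinement would run into exactly the induced differentials that the paper flags as hard to compute, and into the geometric realization of differentials as holomorphic discs that section \ref{geom} states only as an expectation; this could be handled either by arranging $\mathcal{H}$ to be a nice diagram so that its differentials become combinatorial and can be matched term by term, or by pairing the bordered Floer type-$D$ invariant of the rational tangle complement --- a ``staircase'' module governed by the continued fraction of $p/q$ --- with the bordered invariant of the closure, but routing through thinness of alternating links avoids these issues entirely.
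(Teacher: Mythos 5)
Your argument is essentially the paper's: Corollary~\ref{LFH} is derived from Lemma~\ref{ratMVA}, which identifies the specialization $a=1$, $s=u/v$, $q=v$ of the $(\Lambda^i,\Lambda^1)$ data with $(1-u^2)\Delta(L')(u^2,v^2)$, combined with the observation that for rational (two-bridge, hence alternating) links $\widehat{HFL}$ is thin and therefore determined by the multivariable Alexander polynomial --- precisely the thinness step you invoke in your last paragraph. The extra material about capping off the geometric model to a Heegaard diagram and matching gradings at the chain level is a reasonable elaboration of the paper's observation that the pictures are ``essentially 2-bridge diagrams,'' but, as you yourself note, the route through thinness makes that reconciliation unnecessary, so the effective proof coincides with the paper's.
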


In the final section we compare the invariants described in this paper with (multivariable) link invariants arising from Lie superalgebras $\mathfrak{sl}_{m|n}$, as defined by Geer and Patureau-Mirand \cite{GP2}, which satisfy very interesting color stability properties \cite{GP1}.

\subsection*{Structure of this paper}

Section \ref{review} is a review of \cite{Cau2}, \cite{CKM} and related papers. In section \ref{rattan} we compute the twist rules that recursively determine the Poincar\'e polynomials of colored rational tangles and, thus, prove Theorem \ref{thmA}. Section \ref{geom} introduces a picture-way of computing the colored HOMFLY complexes of rational tangles, which proves Theorem \ref{thmB} and suggests a Floer theoretic interpretation. Section \ref{conj2} contains proofs of Proposition \ref{colshiftprop} and Corollary \ref{LFH} and makes contact with multivariable link invariants from Lie superalgebras. 

\subsection*{Acknowledgements}
Some of the new results in this paper are based on ideas of my PhD supervisor Jacob Rasmussen and I have profited greatly from discussions with him. Preliminary work was done by his undergraduate summer student Mihajlo Ceki\'c on the twist rules in the case of colors $(\Lambda^2,\Lambda^2)$.  
I would further like to thank Marko Sto\v{s}i\'c for pointing me towards Conjectures 1 and 2, and Sabin Cautis, Ciprian Manolescu, Bertrand Patureau-Mirand, Catharina Stroppel and Daniel Tubbenhauer for helpful email exchanges and discussions. \footnote{
The author's PhD studies at the Department of Pure Mathematics and Mathematical Statistics, University of Cambridge, are supported by the ERC grant ERC-2007-StG-205349 held by Ivan Smith and an EPSRC department doctoral training grant.}

\section{Categorified quantum link invariants via categorical skew Howe duality}
\label{review}
In this section we set up the framework in which our calculations take place.

\subsection{Reshetikhin-Turaev tangle invariants}
\label{RTi}
\begin{defi} A \emph{tangle} is an embedding of pairs $(T,\partial T)\to (D^2\times I,D^2\times \partial I)$, where $T$ is the disjoint union of a finite number of oriented arcs and circles, $I$ is the interval $[-1,1]$ and $D^2$ is the unit disc in $\mathbb{C}$. The ends of the arcs lying on $D^2\times \{-1\}$ and $D^2\times \{1\}$ are called \emph{bottom and top ends} respectively. A \emph{tangle diagram} is a generic projection of the tangle onto $I\times I$, sending  $D^2\times \{\pm 1\}$ to  $I \times \{\pm 1\}$.\\
We consider tangles up to regular isotopies that fix the boundary. This means we identify tangles that are given by tangle diagrams which differ only by planar isotopy and Reidemeister moves of type 2 and 3. To emphasize that we do not allow Reidemeister moves of type 1, we sometimes refer to tangles as framed tangles. In the following, the arcs and circles of tangles (and hence tangle diagrams) are allowed to carry orientations and labels (colors).  
\end{defi}
 
In \cite{Wit} Witten gave a physical interpretation of the Jones polynomial and the $a=q^N$ specializations of the HOMFLY polynomial via Chern-Simons theory with gauge group $G=SU(2)$ and $G=SU(N)$ respectively. Later Reshetikhin and Turaev \cite{ReT} provided a rigorous mathematical framework for such quantum link invariants using the representation theory of quantum groups $U_q(\mathfrak{g})$, which are deformations of the enveloping algebras of the Lie algebras of the gauge groups $G$. More generally, they define invariants of framed tangles $T$ with strands labelled by irreducible representations of $U_q(\mathfrak{g})$ or equivalently by dominant weights of the semi-simple Lie algebra $\mathfrak{g}$. If the bottom end strands are labelled by $\underline{\lambda}=(\lambda_1,\dots, \lambda_n)$ and the top end strands by $\underline{\mu} = (\mu_1,\dots, \mu_{n'})$, they associate to $T$ a map of $U_q(\mathfrak{g})$-representations
\[\psi(T)\colon V_{\underline{\lambda}}=V_{\lambda_1}\otimes\dots\otimes V_{\lambda_n}\to V_{\mu_1}\otimes\dots\otimes V_{\mu_{n'}}= V_{\underline{\mu}}\]
where $V_\lambda$ denotes the irreducible $U_q(\mathfrak{g})$ representation of highest weight $\lambda$. This map is an invariant of the framed labelled tangle. The invariant is computed by isotoping $T$ into generic position, then scanning it from bottom to top and assembling the map from the maps associated to cups, caps and crossings.\\

In the case of links $L$ and complex semi-simple Lie algebras $\mathfrak{g}$, where the bottom and top representations are both the trivial $U_q(\mathfrak{g})$ representation  $\mathbb{C}(q)$, the map $\psi(L)$ is given by multiplying by $\psi(L)(1)\in \mathbb{C}(q)$. We list some prominent instances:
\begin{itemize}
\item If $\mathfrak{g}=\mathfrak{sl}_{2}$ and $L$ is labelled by (symmetric powers of) the standard representation, then $\psi(L)(1)$ is the (colored) Jones polynomial of $L$.
\item If $\mathfrak{g}=\mathfrak{sl}_N$ and $L$ is labelled by the standard representation, then $\psi(L)(1)$ is the $a=q^N$ specialization of the HOMFLY polynomial of $L$.
\item If $\mathfrak{g}=\mathfrak{sl}_N$ and $L$ is labelled by some other irreducible representations, then $\psi(L)(1)$ is the $a=q^N$ specialization of the corresponding colored HOMFLY polynomial of $L$.
\end{itemize} 
\subsection{The $U_q(\mathfrak{sl}_N)$ representation category and web relations}
In this paper we are only interested in the tangle invariants for $\mathfrak{g}=\mathfrak{sl}_N=\mathfrak{sl}_N(\mathbb{C})$ and fundamental representations $\Lambda^iV$, where $V$ is the standard representation. The representation category of the quantum group $U_q(\mathfrak{sl}_N)$ is well understood and has a nice graphical description which ties in nicely with the Reshetikhin-Turaev picture. We follow the exposition in \cite{CKM}.

\begin{defi}
The \emph{free spider category} $\mathrm{FSp}(\mathfrak{sl}_N)$ has as objects finite sequences $\underline{k}$ in $\{1^\pm,\dots, (N-1)^\pm\}$ and morphisms in $Mor(\underline{k},\underline{l})$  are $\mathbb{C}(q)$-linear combinations of oriented planar graphs, called \emph{webs}, in the unit square $[-1,1]\times [-1,1]\subset \mathbb{R}^2$, up to planar isotopy, such that:
\begin{itemize}
\item The edges of the web are labelled by elements of the set $\{1,\dots ,N-1\}$.
\item The boundary of the web splits into a bottom boundary on $[-1,1]\times\{-1\}$ and a top boundary on $[-1,1]\times\{1\}$.
\item The sequence of labels and orientations on the bottom and top boundary agree with the sequences $\underline{k}$ and $\underline{l}$, where $j^+$ and $j^-$ stand for upward and downward oriented strands labelled by $j$.
\item The internal vertices are of only four types:
\[\vcenter{\hbox{\includegraphics[height=1.2cm, angle=0]{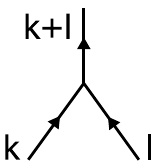}}}\quad \vcenter{\hbox{\includegraphics[height=1.2cm, angle=0]{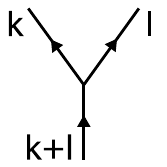}}}\quad \vcenter{\hbox{\includegraphics[height=1.2cm, angle=0]{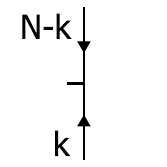}}} \quad \vcenter{\hbox{\includegraphics[height=1.2cm, angle=0]{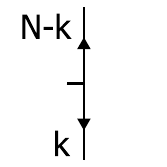}}}\]
\end{itemize} 
The latter two bivalent vertices are called \emph{tags}. In general it does matter on which side the tag is placed and for now we distinguish these local sub-graphs from their mirror images. In some formulas we further allow labellings of edges by integers outside the range of $\{1,\dots,N-1\}$. Edges labelled by $0$ are to be deleted, $N$-labelled edges incident to a trivalent vertex get reduced to tags as shown in the following figure. Any diagram containing a labelling outside the range of $\{0,\dots, N\}$ is defined to be $0$.
\[ \vcenter{\hbox{\includegraphics[height=1.2cm, angle=0]{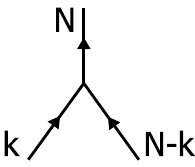}}}=\vcenter{\hbox{\includegraphics[height=1.2cm, angle=0]{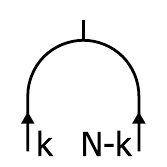}}}\quad \vcenter{\hbox{\includegraphics[height=1.2cm, angle=0]{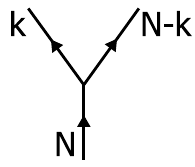}}} = \vcenter{\hbox{\includegraphics[height=1.2cm, angle=0]{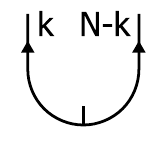}}}\]

Morphisms are composed by vertically stacking graphs and glueing boundary points.\\
 
We further define for $n \in \mathbb{Z}$, $k\in \mathbb{N}$, $k\leq |n|$:
\begin{itemize}
\item  the \emph{quantum integers} $[n]:= \frac{q^{n}-q^{-n}}{q^{1}-q^{-1}}$.
\item  the \emph{quantum factorials} $[n]! := [1][2]\cdots [n]$ if $n\geq 0$ and $[n]! := [-1][-2]\cdots [n]$ if $n\leq 0$.
\item the \emph{quantum binomial coefficients} ${n\brack k }:=\frac{[n][n-1]\cdots [n-k+1]}{[k]!}$.
\end{itemize}
Note that $[-n]=-[n]$ and $[-n]!=(-1)^n[n]$. Negative quantum integers are only used in the last relation in the following definition. For the remainder of this paper we only deal with positive quantum integers.
\end{defi}
\begin{defi}
The \emph{$\mathfrak{sl}_N$ spider category} $\mathrm{Sp}(\mathfrak{sl}_N)$ is the quotient of the free spider category $\mathrm{FSp}(\mathfrak{sl}_N)$ by the following local relations on morphisms

\begin{eqnarray}
\label{rel1} \quad\vcenter{\hbox{\includegraphics[height=1.2cm, angle=0]{Rel12.pdf}}}\quad &=&(-1)^{k(N-k)}\vcenter{\hbox{\includegraphics[height=1.2cm, angle=0]{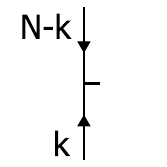}}}\\
\label{rel2}\quad \vcenter{\hbox{\includegraphics[height=2cm, angle=0]{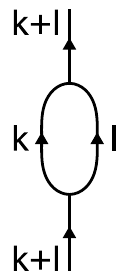}}}\quad&=&{k+l\brack l}\vcenter{\hbox{\includegraphics[height=2cm, angle=0]{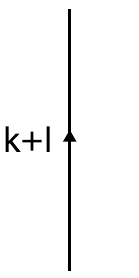}}}\\
\label{rel3} \vcenter{\hbox{\includegraphics[height=2cm, angle=0]{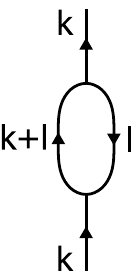}}}\quad&=&{N-k\brack l}\vcenter{\hbox{\includegraphics[height=2cm, angle=0]{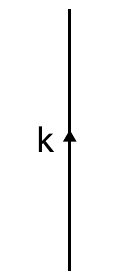}}}\\
\label{rel4}\quad \vcenter{\hbox{\includegraphics[height=2cm, angle=0]{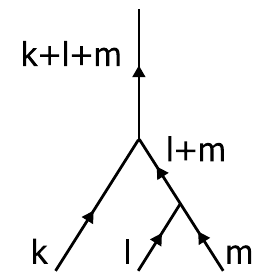}}}&=&\vcenter{\hbox{\includegraphics[height=2cm, angle=0]{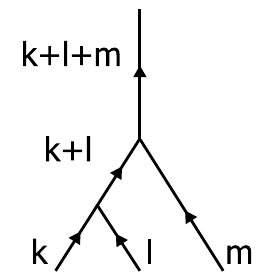}}}\\
\label{rel4b}\quad \vcenter{\hbox{\includegraphics[height=2cm, angle=0]{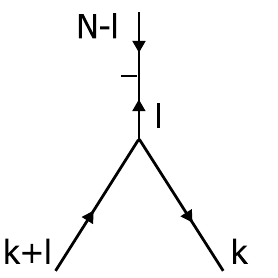}}}&=&\vcenter{\hbox{\includegraphics[height=2cm, angle=0]{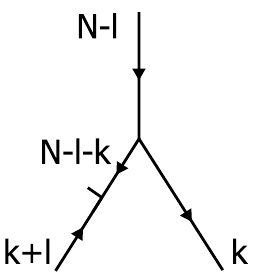}}}\\
\label{rel5}\quad \vcenter{\hbox{\includegraphics[height=2cm, angle=0]{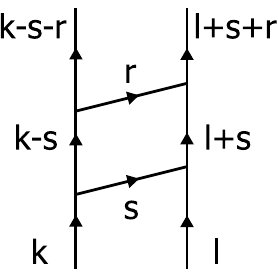}}}&=&{r+s \brack r}\vcenter{\hbox{\includegraphics[height=2cm, angle=0]{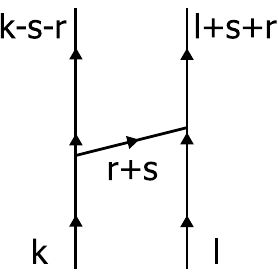}}}\\
\label{rel6}\quad \vcenter{\hbox{\includegraphics[height=2cm, angle=0]{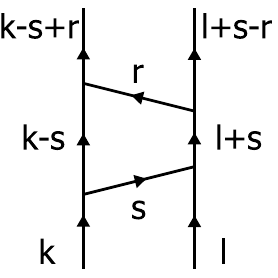}}}&=&\sum_{t} {k-l+r-s \brack t}\vcenter{\hbox{\includegraphics[height=2cm, angle=0]{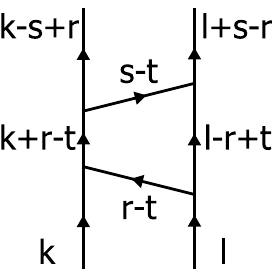}}}
\end{eqnarray}

together with the mirror images and arrow reversals of these. 
\end{defi}

We briefly recall some facts about the representation category of $U_q(\mathfrak{sl}_N)$ and collect them in the following lemma. Details can be found in \cite{CKM}.

\begin{lem}
The representation category of $U_q(\mathfrak{sl}_N)$ is a pivotal category and it is the idempotent completion (Karoubi envelope) of the full subcategory generated by objects isomorphic to tensor products of fundamental representations, which we denote by $\mathrm{Rep}(U_q(\mathfrak{sl}_N))$. \\

As a pivotal category $\mathrm{Rep}(U_q(\mathfrak{sl}_N))$ is generated by up to scaling unique intertwiners
\[M_{k,l} \colon \Lambda^kV\otimes \Lambda^lV \to \Lambda^{k+l}V\]
\[M_{k,l}' \colon \Lambda^{k+l}V\to \Lambda^kV\otimes \Lambda^lV \]

\end{lem}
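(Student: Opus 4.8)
The plan is to verify the three assertions separately, each by assembling standard facts from the representation theory of quantum groups together with the results of \cite{CKM}. For the pivotal structure I would recall that $U_q(\mathfrak{sl}_N)$ is a ribbon Hopf algebra: the square of its antipode is implemented by conjugation with the grouplike element $K_{2\rho}$, where $2\rho$ is the sum of the positive roots, and this element furnishes a pivotal (indeed ribbon) structure on the category of finite-dimensional type-$1$ modules. Since every tangle invariant considered here lives in this finite-dimensional setting, nothing more is needed for the first claim.

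For the idempotent-completion statement I would use that, for generic $q$, the category of finite-dimensional type-$1$ $U_q(\mathfrak{sl}_N)$-modules is semisimple with simple objects $V_\lambda$ indexed by dominant integral weights $\lambda$. The fundamental weights generate the weight lattice, so by highest-weight theory each $V_\lambda$ appears as a direct summand of a suitable tensor product $\Lambda^{k_1}V\otimes\cdots\otimes\Lambda^{k_m}V$ of fundamental representations; conversely any such tensor product, being finite-dimensional, decomposes into a finite sum of $V_\lambda$'s. Thus $\mathrm{Rep}(U_q(\mathfrak{sl}_N))$ is a full additive subcategory containing, up to direct sum, a representative of every simple object, and a semisimple category with this property is precisely the Karoubi envelope of such a subcategory --- which gives the second claim.

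For the generation statement, the key input is the multiplicity-one fact $\dim_{\mathbb{C}(q)}\mathrm{Hom}(\Lambda^kV\otimes\Lambda^lV,\Lambda^{k+l}V)=1$ whenever $k+l\leq N$, which follows from the quantum Pieri rule (the quantum analogue of $\Lambda^kV\otimes\Lambda^lV\cong\Lambda^{k+l}V\oplus(\text{summands of strictly smaller highest weight})$) together with semisimplicity; this pins down $M_{k,l}$ and $M_{k,l}'$ up to a scalar, and the pairing coming from the pivotal structure shows the two are mutually dual up to normalization. That these trivalent intertwiners, together with the cup and cap morphisms built from the pivotal structure, generate all of $\mathrm{Rep}(U_q(\mathfrak{sl}_N))$ as a pivotal category is the content of the main theorem of \cite{CKM}, which exhibits an equivalence $\mathrm{Sp}(\mathfrak{sl}_N)\simeq\mathrm{Rep}(U_q(\mathfrak{sl}_N))$ sending the web generators to the $M_{k,l}$ and $M_{k,l}'$; I would simply invoke that equivalence.

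The main obstacle --- were one to prove the last point from scratch rather than cite \cite{CKM} --- is establishing that the listed webs and relations give a complete presentation, i.e. that the evaluation functor $\mathrm{Sp}(\mathfrak{sl}_N)\to\mathrm{Rep}(U_q(\mathfrak{sl}_N))$ is full and faithful. Fullness is a spanning argument showing every intertwiner is a linear combination of webs; faithfulness is the delicate half, handled in \cite{CKM} via a dimension count matching $\mathrm{Hom}$-spaces of webs against Lusztig's canonical basis for $\dot{U}_q(\mathfrak{sl}_N)$. For the purposes of this paper it suffices to take these results as known.
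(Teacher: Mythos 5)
Your proposal is correct and follows essentially the same route as the paper, which does not give a proof but simply collects these standard facts and refers the reader to \cite{CKM} for details. You have usefully unpacked that citation into its concrete ingredients --- pivotality via the ribbon element $K_{2\rho}$, semisimplicity for generic $q$ together with highest-weight theory for the Karoubi-envelope claim, the quantum Pieri multiplicity-one fact for uniqueness of $M_{k,l}$ and $M_{k,l}'$, and the equivalence $\mathrm{Sp}(\mathfrak{sl}_N)\simeq\mathrm{Rep}(U_q(\mathfrak{sl}_N))$ from \cite{CKM} for the generation statement --- all of which matches what the cited reference establishes.
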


\begin{thm} \label{ckmthm}(Theorems 3.2.1 and 3.3.1 in \cite{CKM}) There is an equivalence of pivotal categories $\phi\colon  \mathrm{Sp}(\mathfrak{sl}_N) \to \mathrm{Rep}(U_q(\mathfrak{sl}_N))$, defined on
\begin{itemize}
\item objects by: 
\[\underline{k}=(k_1^{\epsilon_1},\dots, k_s^{\epsilon_s})\to (\Lambda^{k_1}V)^{\epsilon_1}\otimes\cdots\otimes(\Lambda^{k_s}V)^{\epsilon_s}\] where $\epsilon_k\in \{-1,1\}$ and $W^{-1}$ stands for the representation dual to $W^1:= W$.
\item generating morphisms by:
\[\vcenter{\hbox{\includegraphics[height=1.2cm, angle=0]{MOY1.pdf}}} \to M_{k,l} \quad\quad \vcenter{\hbox{\includegraphics[height=1.2cm, angle=0]{MOY2.pdf}}} \to M_{k,l}'\]
\end{itemize}
\end{thm}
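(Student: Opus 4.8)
The plan is to construct the functor $\phi$ explicitly on generators, verify it is well-defined, and then show it is essentially surjective, full, and faithful; the engine throughout is quantum skew Howe duality. First I would define $\phi$ on objects as in the statement, and on the trivalent generators by sending the ``merge'' vertex to the intertwiner $M_{k,l}\colon \Lambda^k V\otimes \Lambda^l V\to \Lambda^{k+l}V$, which is unique up to scalar and concretely realized by $q$-antisymmetrization inside the quantum exterior algebra, and the ``split'' vertex to a suitably normalized adjoint $M_{k,l}'$. The cups, caps and tags are then forced by the pivotal structure of $\mathrm{Rep}(U_q(\mathfrak{sl}_N))$ together with the isomorphisms $\Lambda^N V\cong \mathbb{C}(q)$ and $\Lambda^k V\cong (\Lambda^{N-k}V)^*$.

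Next comes well-definedness: one must check that the defining relations \eqref{rel1}--\eqref{rel6} of $\mathrm{Sp}(\mathfrak{sl}_N)$ hold among the images of the generators. The tag and arrow-reversal relation \eqref{rel1} and the square-removal relations \eqref{rel4}, \eqref{rel4b} are consequences of the pivotal structure and the complementary isomorphism above; the bigon and ``digon with $N$'' relations \eqref{rel2}, \eqref{rel3} reduce to elementary $q$-binomial identities for iterated quantum wedges (with \eqref{rel3} also using $\Lambda^N V\cong\mathbb{C}(q)$); the associativity-type relation \eqref{rel5} is the image of the divided-power identity $E^{(r)}E^{(s)}={r+s\brack r}E^{(r+s)}$; and the crucial square-switch relation \eqref{rel6} is exactly the image of the commutator relation $E_iF_i-F_iE_i=[H_i]$ in $\dot U_q(\mathfrak{gl}_m)$ once a ``ladder'' web with rungs labelled by $E$'s and $F$'s is recognized as the corresponding element of $\dot U_q(\mathfrak{gl}_m)$ acting on a weight space of $\Lambda_q^\bullet(\mathbb{C}^m\otimes\mathbb{C}^N)$. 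Essential surjectivity is then immediate, since $\mathrm{Rep}(U_q(\mathfrak{sl}_N))$ is by definition generated by tensor products of fundamental representations.

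For fullness I would invoke quantum skew Howe duality directly: for a sequence of upward labels $\underline k=(k_1,\dots,k_m)$ the representation $\Lambda^{k_1}V\otimes\cdots\otimes\Lambda^{k_m}V$ is identified with a $\mathfrak{gl}_m$-weight space of $\Lambda_q^\bullet(\mathbb{C}^m\otimes\mathbb{C}^N)$, and the commuting $U_q(\mathfrak{gl}_m)$-action surjects onto the space of $U_q(\mathfrak{sl}_N)$-intertwiners between such weight spaces. Translating the generators $E_i^{(a)},F_i^{(a)}$ of $\dot U_q(\mathfrak{gl}_m)$ into their ladder webs shows every such intertwiner lies in the image of $\phi$; objects with mixed orientations and general $\underline k$ are reduced to this case using the cup/cap/tag (pivotal) structure, which is why one needs the two variant theorems of \cite{CKM} rather than just the positively-oriented statement.

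The main obstacle is faithfulness: one must show the listed relations are \emph{complete}, i.e.\ that they already cut the web space $\mathrm{Mor}(\underline k,\underline l)$ down to dimension at most $\dim \mathrm{Hom}_{U_q(\mathfrak{sl}_N)}(\phi\underline k,\phi\underline l)$. I would argue by spanning: use \eqref{rel1}--\eqref{rel6} to rewrite an arbitrary web first as a ladder web, and then, using the square-switch and bigon relations, into a Lusztig-style ``double ladder'' canonical form indexed by the same combinatorial data --- lattice paths, equivalently Littlewood--Richardson fillings --- that computes $\dim\mathrm{Hom}$. Since $\phi$ is already full and the source then has a spanning set no larger than the dimension of the target, $\phi$ must be faithful, and the equivalence follows. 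The delicate point, and where essentially all the work lies, is verifying that no relation beyond those listed is ever needed to reach this canonical form; here the precise shape of the coefficients in \eqref{rel5} and \eqref{rel6} is exactly what makes the reduction go through.
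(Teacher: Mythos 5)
The paper itself offers no proof of Theorem \ref{ckmthm}: it is quoted verbatim from Theorems 3.2.1 and 3.3.1 of \cite{CKM} and thereafter used as a black box, so there is no internal argument to compare yours against. That said, your sketch does track the strategy of \cite{CKM} reasonably faithfully: define $\phi$ on generators, check the relations by translating into identities in $\dot{U}_q(\mathfrak{gl}_m)$ under the skew Howe map, observe essential surjectivity is definitional, obtain fullness because quantum skew Howe duality makes the $\dot{U}_q(\mathfrak{gl}_m)$ action (realized by ladder webs) surject onto $U_q(\mathfrak{sl}_N)$-intertwiners between weight spaces, and then do the real work of faithfulness by showing the listed relations already reduce every web to a spanning set no larger than the target Hom space.

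Two points of precision are worth flagging. First, the square-switch relation \eqref{rel6} carries divided powers $E^{(a)}$, $F^{(b)}$ on the rungs and corresponds to the full divided-power commutation identity — essentially relation xi listed later in this paper, decategorifying the isomorphism of \cite{KLMS} — not merely to $E_iF_i-F_iE_i=[H_i]$, which is only the $a=b=1$ specialization. Second, the ``Lusztig-style double ladder canonical form'' you invoke is terminology and technology that postdates \cite{CKM}; their faithfulness argument is organized somewhat differently, around identifying the kernel of the skew Howe map and checking that the spider relations account for it, rather than via an explicit canonical basis in the web category. Neither point is a logical gap in your outline, but if you were to write this out in full you would need either to import the actual CKM kernel computation or to substitute the later double-ladder machinery with care, since ``verifying that no relation beyond those listed is ever needed'' is, as you rightly say, where essentially all the work lies.
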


As a by-product, this theorem says that all computations of Reshetikhin-Turaev invariants of tangles labelled with fundamental representations can be done in the spider category of $\mathfrak{sl}_N$. However, this idea is much older and goes back at least to the paper \cite{MOY} by Murakami, Ohtsuki and Yamada. Thus the morphisms in the $\mathfrak{sl}_N$ spider category are also known as \emph{MOY graphs}.

\begin{rem} In \cite{MOY} tags do not appear and edges labelled by $N$ can be erased completely. Not distinguishing between the two ways of inserting a tag causes an error of a factor of $\pm 1$ on the level of tangle invariants. We adopt this convention and hence work with a smaller category in which the following relation holds:
\begin{equation}
\label{rel7}
\vcenter{\hbox{\includegraphics[height=1.2cm, angle=0]{Rel12.pdf}}}=\vcenter{\hbox{\includegraphics[height=1.2cm, angle=0]{Rel14.pdf}}}
\end{equation}

This new spider category is equivalent to the projective representation category \\$\mathrm{Rep}(U_q(\mathfrak{sl}_N))/{\pm 1}$ and it will suffice for our purposes. 
\end{rem}
\subsection{Constructing Reshetikhin-Turaev tangle invariants}
\label{RT}
We now describe the maps that can be used to define Reshetikhin-Turaev tangle invariants\footnote{The maps we present agree with the original Reshetikhin-Turaev maps up to multiplication by non-zero scalars and powers of $q$.}.

\begin{lem}
\label{doubletag}
Every oriented strand in a web can be inverted at the expense of changing its label $k$ into the complementary label $N-k$ and introducing two tags. 
\end{lem}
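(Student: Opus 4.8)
The plan is to deduce the lemma from two facts about tags that are already built into the definition of $\mathrm{Sp}(\mathfrak{sl}_N)$: first, that a single tag on an edge labeled $k$ glues it to an oppositely oriented edge labeled $N-k$; and second, that two tags placed in succession on a $k$-labeled strand compose to the identity. The first point is essentially the definition of a tag together with its image under the equivalence $\phi$ of Theorem \ref{ckmthm}: read as a morphism in $\mathrm{Rep}(U_q(\mathfrak{sl}_N))$, a tag is, up to a nonzero scalar, the canonical pivotal isomorphism $\Lambda^k V \xrightarrow{\sim} (\Lambda^{N-k}V)^\ast$, which is precisely what an $N$-labeled edge incident to a trivalent vertex collapses to under the reduction rules for $N$-labeled edges recalled after the definition of $\mathrm{FSp}(\mathfrak{sl}_N)$; in web language this says that past a tag an upward $k$-strand continues as a downward $(N-k)$-strand. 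The second point is relation \eqref{rel1}, which states that a $k$-strand carrying two tags equals $(-1)^{k(N-k)}$ times the bare $k$-strand; since we work in the smaller spider category of the preceding Remark, in which relation \eqref{rel7} holds, the sign is $+1$ and a double tag is literally the identity morphism on the $k$-strand.

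Granting these, the argument is short. Fix the edge $e$ of the web to be inverted and choose a short sub-arc in its interior. Inserting a double tag along this sub-arc does not change the morphism the web represents, by the second fact above. By the first fact, the portion of $e$ lying between the two newly inserted tags now carries the label $N-k$ with the reversed orientation, while the two outer portions of $e$ keep the label $k$ and the original orientation. A planar isotopy then slides one of the two tags toward each endpoint of $e$ (a trivalent vertex of the web, or a boundary point), displaying $e$ as an $(N-k)$-labeled, orientation-reversed edge with a single tag inserted at each of its two ends — which is the assertion. Applying the same move edge by edge reverses any chosen connected strand.

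The only thing needing genuine care is the bookkeeping of conventions, and I expect this to be the sole, minor obstacle: $\mathrm{FSp}(\mathfrak{sl}_N)$ distinguishes a tag from its mirror image, so one must check that the two tags produced above sit on the sides for which \eqref{rel1} (respectively \eqref{rel7}) is stated, and that the passage ``$k$ upward $\leftrightarrow$ $N-k$ downward'' is compatible with the labeling convention $j^{\pm}$ for up/down strands. Once the conventions of \cite{CKM} are fixed this is routine, and in the quotient category where mirror tags are identified it is automatic; no deeper difficulty should arise, as the lemma simply repackages the duality $\Lambda^k V \cong (\Lambda^{N-k}V)^\ast$ that the tag formalism is designed to express.
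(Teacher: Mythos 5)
Your proposal hinges on the assertion that relation \eqref{rel1} (and its sign-free form \eqref{rel7} in the projective category) says ``a $k$-strand carrying two tags equals $(-1)^{k(N-k)}$ times the bare $k$-strand.'' That is a misreading. Relation \eqref{rel1} compares a \emph{single} tag to its mirror image: placing the tag on one side of the strand equals $(-1)^{k(N-k)}$ times placing it on the other side, and \eqref{rel7} drops the sign. Neither of these is a double-tag cancellation. The statement that two tags in succession compose to the identity on a $k$-strand is precisely the substantive content of Lemma \ref{doubletag}; it is not ``already built into the definition,'' and it does not follow from \eqref{rel1} or \eqref{rel7} alone. Since your entire argument rests on inserting a double tag in the middle of an edge and declaring this a no-op by citing \eqref{rel1}, the proof has a real gap at exactly the point where something must be proved.

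The paper's actual route is to expand each tag back into a trivalent vertex with an incident $N$-labelled edge (as recalled after the definition of $\mathrm{FSp}(\mathfrak{sl}_N)$), which turns the doubly-tagged, orientation-reversed $(N-k)$-strand into the picture governed by the bubble/circle-removal relation \eqref{rel3}. Applying \eqref{rel3} with $l = N-k$ gives a coefficient of $\binom{N-k}{N-k} = 1$ and hence the bare $k$-strand. Note that this is a genuine identity among webs using the binomial-coefficient relations, not a formal consequence of pivotal duality or the tag-flip relation. Your first fact (a tag realizes $\Lambda^k V \cong (\Lambda^{N-k}V)^*$) and your planar-isotopy bookkeeping are fine; what is missing is the actual computation that a double tag is the identity, for which you should invoke relation \eqref{rel3} rather than \eqref{rel1}.

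Note also that you are right that the projective category, via \eqref{rel7}, dispenses with the side-of-tag bookkeeping, but that is orthogonal to the cancellation issue: even in $\mathrm{Sp}(\mathfrak{sl}_N)$ with signs, the proof goes through \eqref{rel3}, and the only role of \eqref{rel7} is to let one freely reposition the tags afterwards.
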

\begin{proof}
To see this, we use relation \eqref{rel3} in the projective $\mathfrak{sl}_N$ spider category:
 \[\vcenter{\hbox{\includegraphics[height=1.9cm, angle=0]{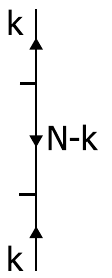}}}=\vcenter{\hbox{\includegraphics[height=1.9cm, angle=0]{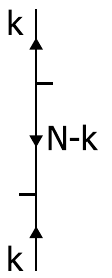}}}=\vcenter{\hbox{\includegraphics[height=1.9cm, angle=0]{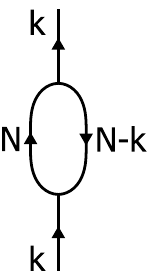}}}= {N-k \brack N-k} \vcenter{\hbox{\includegraphics[height=1.9cm, angle=0]{Rel8.pdf}}}= \vcenter{\hbox{\includegraphics[height=1.9cm, angle=0]{Rel8.pdf}}}\]
\end{proof}

Let $T$ be a tangle in general position labelled by fundamental representations of $U_q(\mathfrak{sl}_N)$. In order to define the Reshetikhin-Turaev invariant of $T$, it suffices to define the maps that correspond to cups, caps and crossings. Via Theorem \ref{ckmthm} we can present these maps in terms of linear combinations of webs.

\begin{defi} (Building blocks for RT invariants) 
To define the maps associated to cups and caps, replace each downward oriented $k$-strand by an upward oriented strand labelled by $N-k$ and place tags on cups and caps, e.g.:
\begin{itemize}
\item For cups colored by $\Lambda^kV$: $\vcenter{\hbox{\includegraphics[height=1.2cm, angle=0]{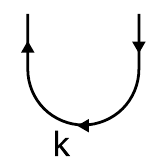}}} \to \vcenter{\hbox{\includegraphics[height=1.2cm, angle=0]{RT2.pdf}}}= M_{k,N-k}'$.
\item For caps colored by $\Lambda^kV$: $\vcenter{\hbox{\includegraphics[height=1.2cm, angle=0]{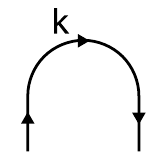}}} \to \vcenter{\hbox{\includegraphics[height=1.2cm, angle=0]{RT4.pdf}}}= M_{k,N-k}$.
\end{itemize}
The maps associated to positive crossings are:
\begin{itemize}
\item $\vcenter{\hbox{\includegraphics[height=2cm, angle=0]{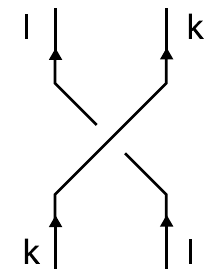}}}\mapsto\quad\sum_{r=0}^l (-1)^{r+(k+1)l} q^{r}\quad \vcenter{\hbox{\includegraphics[height=2cm, angle=0]{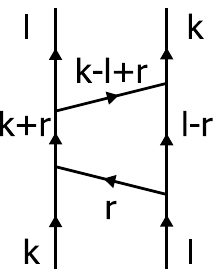}}}$ $\quad$ if $l\leq k$
\item $\vcenter{\hbox{\includegraphics[height=2cm, angle=0]{RT5.pdf}}}\mapsto\quad \sum_{r=0}^k (-1)^{r+(l+1)k} q^{r}\quad\vcenter{\hbox{\includegraphics[height=2cm, angle=0]{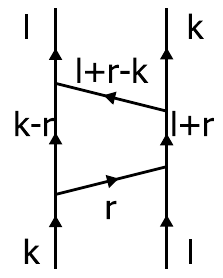}}}$ $\quad$ if $l>k$
\end{itemize}
The formulas for negative crossings are obtained by replacing $q$ by $q^{-1}$. These formulas are taken from \cite{MOY} with the $q$-grading adjusted to our purposes.
\end{defi}

\begin{rem}
For Reshetikhin-Turaev invariants of links colored by fundamental representations of $\mathfrak{sl}_N$ there is an algorithm that does not use any tags at all: without replacing any cups or caps, immediately use the description of the crossing maps to locally replace all crossings by linear combination of webs. The result is a linear combination of closed webs which can then be evaluated to elements in the ground ring. An explicit algorithm for evaluating closed webs that does not use tags is described in Corollary 14.8 and the proof of Theorem 14.7 in \cite{Wu}.
\end{rem}

\begin{rem}
Similar as in the uncolored case, where all involved representations on the strands are standard, there is a stability in the sequence of colored $\mathfrak{sl}_N$ invariants for varying $N$. Using this stability, one can define two-variable colored HOMFLY invariants. However, we have to explain what it means to fix representations on the strands when dealing with $\mathfrak{sl}_N$ invariants of different $N$. Formally, colored HOMFLY link invariants can be defined as invariants of oriented links labelled by Young diagrams, or equivalently by partitions of integers. These partitions specify an irreducible representation as highest weight representation in a tensor product of fundamental representations, independent of $N\gg 0$. In particular, a one-part partition $(k)$ stands for the $k$-th exterior power of the standard representation. To compute the colored HOMFLY invariant of a link pick a $N\gg 0 $, interpret labels $(k)$ as $\Lambda^k V$ and proceed in the same way as when computing the corresponding $\mathfrak{sl}_N$ invariant. However, in doing this treat $a=q^N$ as an independent variable and replace every occurrence of ${N+b \brack c }= \prod_{k=1}^c \frac{[N+b-k+1]}{[k]}= \prod_{k=1}^c \frac{q^{N+b-k+1}-q^{-N-b+k-1}}{q^{k}-q^{-k}} $ by ${b \brack c}_a:= \prod_{k=1}^c \frac{a q^{b-k+1}-a^{-1}q^{-b+k-1}}{q^{k}-q^{-k}}$.
This recipe still produces invariants of framed colored links and it takes values in $\mathbb{C}(q)[a^{\pm 1}]$, but usually not in $\mathbb{C}[a^{\pm 1}, q^{\pm 1}]$. Nevertheless, these invariants are sometimes called \emph{colored HOMFLY polynomials}. Similarly one can define colored HOMFLY invariants of tangles, see section \ref{2ndproof}. 
\end{rem}

\subsection{Web relations via skew Howe duality}
Quantum skew Howe duality and its categorical analogues are powerful tools for studying the (2-)representation category of $U_q(\mathfrak{sl}_N)$. 

Let $X$ be the weight lattice of $\mathfrak{sl}_N$ with simple roots $\alpha_1,\dots, \alpha_{N-1}$ and fundamental weights $\Lambda_1,\dots, \Lambda_{N-1}$ and the non-degenerate bilinear form $\langle . , . \rangle$ given by:
\[\langle \alpha_i, \alpha_j\rangle=\begin{cases}
2 &  \quad\text{if}\quad i=j\\ 
-1 & \quad\text{if}\quad |i-j|=1 \\
0 & \quad\text{else}
\end{cases}.\]
which furthermore satisfies $\langle \alpha_i, \Lambda_j\rangle = \delta_{i j}$. Denote by $Y$ the root lattice of $\mathfrak{sl}_N$ and $Y_\mathbb{C}:=Y\otimes_\mathbb{Z} \mathbb{C}$.

\begin{defi} 
The quantum group $U_q(\mathfrak{sl}_N)$ of $\mathfrak{sl}_N$ is the $\mathbb{C}(q)$ algebra with generators $E_i,F_i,K_i$ for $i=1,\dots, N-1$ and the relations:
\[K_iK_j = K_jK_i, \quad K_jE_iK_j^{-1} = q^{\langle \alpha_i,\alpha_j\rangle},\quad K_jF_iK_j^{-1} = q^{-\langle \alpha_i,\alpha_j\rangle} \]
\[[E_i,F_j] =\delta_{ij} \frac{K_i-K_i^{-1}}{q-q^{-1}}\]
\[[2] E_iE_jE_i =E_i^2E_j + E_jE_i^2 \quad \text{if } |i-j|=1, \quad [E_i,E_j]=0 \quad \text{if } |i-j|>1 \text{ and similarly for Fs.}\]

If $\mu = \sum l_i \Lambda_i \in \mathbb{Z}\langle \Lambda_1,\dots, \Lambda_{N-1}\rangle$ is an integral weight, then we write $K_\mu$ for the product $K_1^{l_1}\cdots K_{N-1}^{l_{N-1}}$. 
\end{defi}

$U_q(\mathfrak{sl}_N)$ is a deformation of the universal enveloping algebra of $\mathfrak{sl}_N$. We now recall the Beilinson-Lusztig-MacPherson idempotent modification $\dot{U}(\mathfrak{sl}_N)$ of $U_q(\mathfrak{sl}_N)$. 
\begin{defi} The idempotent version $\dot{U}(\mathfrak{sl}_N)$ of the quantum group of $\mathfrak{sl}_N$ is defined by adjoining orthogonal idempotents $1_\lambda$ for integral $\mathfrak{sl}_N$ weights $\lambda\in X$ to $U_q(\mathfrak{sl}_N$, subject to the following relations:
\[K_\mu 1_\lambda = 1_\lambda K_\mu = q^{\langle \mu, \lambda\rangle}1_\lambda, \quad E_i 1_\lambda = 1_{\lambda + \alpha_i}E_i,\quad F_i1_\lambda = 1_{\lambda-\alpha_i}F_i\]

Alternatively, $\dot{U}(\mathfrak{sl}_N)$ can be considered as a $\mathbb{C}(q)$-linear category with:
\begin{itemize}
\item objects: integral weights $\lambda \in X$.
\item morphisms: generated by $E_i1_\lambda \in Hom(\lambda,\lambda + \alpha_i )$ and $F_i1_\lambda \in Hom(\lambda,\lambda - \alpha_i )$, where $1_\lambda$ denotes the identity morphism of $\lambda$.
\end{itemize}

Of special importance to categorification is the $\mathbb{Z}[q^{\pm 1}]$-subalgebra $_{\mathcal{A}}\dot{U}(\mathfrak{sl}_N)$ of $\dot{U}(\mathfrak{sl}_N)$, which is generated by divided powers $E_i^{(k)}:= \frac{E_i^k}{[k]!}$ and $F_i^{(k)}:=\frac{F_i^k}{[k]!}$.
 \end{defi}

There exist several variants of the following theorem in the literature. For a more thorough treatment we refer to \cite{Cau2}, \cite{CKM} and \cite{CKL2}. Let $\mathbb{C}_q^s$ and $\mathbb{C}_q^N$ be the standard representations of $U_q(\mathfrak{sl}_{s})$ and $U_q(\mathfrak{sl}_N)$ respectively and denote by $\Lambda$ the appropriate $q$-deformed exterior algebra functor. Then we have:

\begin{thm} (Quantum skew Howe duality) 
The algebra $\Lambda(\mathbb{C}_q^s \otimes \mathbb{C}_q^N)$ carries commuting actions of $U_q(\mathfrak{sl}_{s})$ and $U_q(\mathfrak{sl}_N)$ that constitute a Howe pair. I.e. for any $M\in \mathbb{N}$:
\[End_{U_q(\mathfrak{sl}_N)} \Lambda^M(\mathbb{C}_q^s \otimes \mathbb{C}_q^N) \text{ is generated by } U_q(\mathfrak{sl}_{s})\]
\[ End_{U_q(\mathfrak{sl}_{s})} \Lambda^M(\mathbb{C}_q^s \otimes \mathbb{C}_q^N) \text{ is generated by } U_q(\mathfrak{sl}_N)\]
Moreover, $\Lambda(\mathbb{C}_q^s \otimes \mathbb{C}_q^N)$ splits into $U_q(\mathfrak{sl}_{s})$ weight spaces as
\[\Lambda^M(\mathbb{C}_q^s \otimes \mathbb{C}_q^N)\cong \bigoplus_{i_1+\cdots +i_s=M} \Lambda^{i_1}(\mathbb{C}_q^N)\otimes \cdots \otimes \Lambda^{i_s}(\mathbb{C}_q^N)\] where the summands on the right hand side are weight spaces $V(\lambda)$ associated to weights $\lambda=\sum_{j=1}^{s-1} (i_{j+1}-i_{j}) \alpha_j$ and Chevalley generators act by:

\begin{figure}[h]
\centerline{
\begindc{\commdiag}[40]
\obj(0,1)[1a]{$\cdots \otimes \Lambda^{i_k}(\mathbb{C}_q^N)\otimes \Lambda^{i_{k+1}}(\mathbb{C}_q^N)\otimes \cdots $}
\obj(60,1)[1b]{$ \qquad \cdots \otimes \Lambda^{i_k-1}(\mathbb{C}_q^N)\otimes \Lambda^{i_{k+1}+1}(\mathbb{C}_q^N)\otimes \cdots $}
\obj(20,2)[1c]{$~$}
\obj(40,2)[1d]{$~$}
\obj(20,0)[1e]{$~$}
\obj(40,0)[1f]{$~$}
\mor{1f}{1e}{$F_k$}[\atleft,0]
\mor{1c}{1d}{$E_k$}[\atleft,0]
\enddc
}
\end{figure}
\end{thm}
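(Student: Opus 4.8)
\medskip
\noindent\textbf{Proof strategy.}
The plan is to work with the explicit model in which $\Lambda(\mathbb{C}_q^s\otimes\mathbb{C}_q^N)$ is the quantum exterior algebra on an $sN$-dimensional space, using the identification $\mathbb{C}_q^s\otimes\mathbb{C}_q^N\cong(\mathbb{C}_q^N)^{\oplus s}$ as a $U_q(\mathfrak{sl}_N)$-module together with the graded-tensor factorisation of the exterior algebra of a direct sum. Restricted to the degree-$M$ part this already gives the asserted decomposition $\bigoplus_{i_1+\cdots+i_s=M}\Lambda^{i_1}(\mathbb{C}_q^N)\otimes\cdots\otimes\Lambda^{i_s}(\mathbb{C}_q^N)$, with $U_q(\mathfrak{sl}_N)$ acting diagonally through the iterated coproduct; the indicated $U_q(\mathfrak{sl}_s)$-weights are then read off directly from the action of the $K_j$.

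Next I would put the $U_q(\mathfrak{sl}_s)$-action on this module by the formulas in the figure: $E_k^{(r)}$ sends the tensor factors $\Lambda^{i_k}(\mathbb{C}_q^N)\otimes\Lambda^{i_{k+1}}(\mathbb{C}_q^N)$ to $\Lambda^{i_k-r}(\mathbb{C}_q^N)\otimes\Lambda^{i_{k+1}+r}(\mathbb{C}_q^N)$ as the suitably normalised composite $(\mathrm{id}\otimes M_{r,i_{k+1}})\circ(M'_{i_k-r,r}\otimes\mathrm{id})$, and $F_k^{(r)}$ is the reverse composite, while the $K_j$ act by the weight. Checking that these satisfy the defining relations of $U_q(\mathfrak{sl}_s)$ --- in particular the $[E_i,F_j]$ relation with its $\frac{K_i-K_i^{-1}}{q-q^{-1}}$ term and the quantum Serre relations --- reduces to two local computations: the $s=2$ case, an $\mathfrak{sl}_2$-action on $\Lambda(\mathbb{C}_q^N)\otimes\Lambda(\mathbb{C}_q^N)$ which one verifies on wedge monomials using the merge/unmerge relations \eqref{rel2}--\eqref{rel3}; and the interaction of two such $\mathfrak{sl}_2$'s on non-adjacent nodes (they act on disjoint tensor slots, so commute trivially) versus adjacent nodes, where the Serre relation becomes an identity among triple compositions of the $M$'s and $M'$'s that follows from the associativity relations \eqref{rel5}--\eqref{rel6}.

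That the two actions commute is then essentially formal: every Chevalley generator of $U_q(\mathfrak{sl}_s)$ has been assembled from the intertwiners $M_{k,l}$ and $M'_{k,l}$, which --- up to scalars --- are the defining $U_q(\mathfrak{sl}_N)$-intertwiners of Theorem \ref{ckmthm}, hence commute with the diagonal $U_q(\mathfrak{sl}_N)$-action; so the images of $\dot{U}(\mathfrak{sl}_s)$ and $\dot{U}(\mathfrak{sl}_N)$ in $\operatorname{End}\Lambda^M(\mathbb{C}_q^s\otimes\mathbb{C}_q^N)$ centralise one another.

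The remaining --- and genuinely harder --- point is the double-centralizer statement, namely that each image is the \emph{full} commutant of the other on $\Lambda^M$. Here I would argue by comparison with the classical ($q=1$) skew Howe duality for $\mathfrak{gl}_s\times\mathfrak{gl}_N$, which gives both the module decomposition $\bigoplus_\lambda S_\lambda(\mathbb{C}^s)\otimes S_{\lambda'}(\mathbb{C}^N)$ (the dual Cauchy identity) and, in its multiplicity-free form, the double centralizer. Since at generic $q$ the category of finite-dimensional $U_q(\mathfrak{sl}_N)$-modules is semisimple with the same simples and the same tensor-product multiplicities as at $q=1$, the dimension of $\operatorname{End}_{U_q(\mathfrak{sl}_N)}\Lambda^M(\mathbb{C}_q^s\otimes\mathbb{C}_q^N)$ is independent of $q$; a dimension count then forces the $q$-deformed image of $\dot{U}(\mathfrak{sl}_s)$ to exhaust this endomorphism algebra, and symmetrically with the roles of $s$ and $N$ exchanged. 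The main obstacle is making this deformation argument airtight --- one must know that the specialisation is flat and that no collapse of multiplicities occurs --- and for that the cleanest route is to invoke the general machinery of \cite{Cau2}, \cite{CKM}, \cite{CKL2} rather than to reprove semisimplicity of $U_q(\mathfrak{sl}_N)$-mod and skew Howe duality from scratch.
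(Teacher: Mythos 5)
The paper offers no proof of this theorem: it is quoted verbatim as background, prefaced by ``There exist several variants of the following theorem in the literature.\ For a more thorough treatment we refer to \cite{Cau2}, \cite{CKM} and \cite{CKL2}.''\ So there is no paper proof for your sketch to be compared against, and the statement is an imported result rather than a new contribution.

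Your sketch is nonetheless a reasonable outline of how such a proof is organized in the literature, and you have correctly isolated the two halves: the ``soft'' half (the weight-space decomposition from the exterior-algebra factorisation, the construction of the $U_q(\mathfrak{sl}_s)$-action from merges/unmerges, and commutativity because merges/unmerges are $U_q(\mathfrak{sl}_N)$-intertwiners) and the ``hard'' half (the double-centralizer statement). Two remarks. First, a small misattribution: to verify the defining relation $[E_k,F_k]=\tfrac{K_k-K_k^{-1}}{q-q^{-1}}$ for the $\mathfrak{sl}_2$ acting on a pair of adjacent tensor slots you need the square-switch relation \eqref{rel6}, not just the bigon relations \eqref{rel2}--\eqref{rel3}; the bigon relations give you divided-power identities, not the $[E,F]$ commutator. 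Second, and more importantly, the step you correctly flag as the genuine content --- that the image of $\dot{U}(\mathfrak{sl}_s)$ is the \emph{full} commutant of $U_q(\mathfrak{sl}_N)$ on $\Lambda^M$ and vice versa --- is where all of the work of the theorem lives, and in your write-up it is resolved by invoking \cite{Cau2}, \cite{CKM}, \cite{CKL2}.\ That is an honest reduction, but it means your argument is essentially a gloss on the references, which is what the paper does as well, only more briefly.\ If the goal were to produce a self-contained proof, the flat-deformation/dimension-count step would need to be carried out in full (identifying the $R$-form over $\mathbb{Z}[q^{\pm1}]$, checking freeness, and confirming that the classical decomposition specialises without multiplicity collapse at generic $q$); that is precisely the content one cannot skip.
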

From the $U_q(\mathfrak{sl}_{s})$ action on $\Lambda^M(\mathbb{C}_q^s \otimes \mathbb{C}_q^N)$ we immediately get a map:
\[{1_{\lambda'}}_{\mathcal{A}}\dot{U}(\mathfrak{sl}_{s})1_{\lambda} \to Hom_{U_q(\mathfrak{sl}_N)}(V(\lambda),V(\lambda'))\]
The weight spaces $V(\lambda)$ are exactly the possible domains and targets for the Reshetikhin-Turaev invariants of tangles with at most $s$ top and bottom boundary points and a labelling by fundamental representations. Furthermore, the basic building blocks of these invariants --- the maps associated to crossings, cups and caps --- are in the image of the above map and hence can be lifted to $_{\mathcal{A}}\dot{U}(\mathfrak{sl}_{s})$. This is due to Cautis, Kamnitzer and Licata \cite{CKL2}.\\

We now provide a dictionary to translate between the two descriptions of maps of $U_q(\mathfrak{sl}_N)$ representations, given by the action of $_{\mathcal{A}}\dot{U}(\mathfrak{sl}_{s})$ on $\Lambda^M(\mathbb{C}_q^s \otimes \mathbb{C}_q^N)$ on the one hand, and by webs on the other hand. In doing this we omit the weight space idempotents $1_\lambda$ and the subscript $r$ of $E_r^{(k)}$ and $F_r^{(k)}$ from the notation. \\

 The action of Chevalley generators $E= E^{(1)}$ and $F= F^{(1)}$ and divided powers $E^{(k)}$ and $F^{(k)}$ can be written in the following way:
\[E^{(k)}\mapsto \vcenter{\hbox{\includegraphics[height=1.2cm, angle=0]{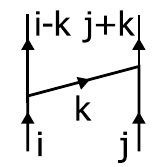}}},\quad F^{(k)}\mapsto \vcenter{\hbox{\includegraphics[height=1.2cm, angle=0]{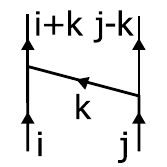}}}\]

The crossing formulas can be written in terms of $E$s and $F$s, e.g. in the case $l\leq k$: \[\vcenter{\hbox{\includegraphics[height=2cm, angle=0]{RT5.pdf}}}\mapsto \quad \sum_{r=0}^l (-1)^{r+(k+1)l} q^{r}\quad \vcenter{\hbox{\includegraphics[height=2cm, angle=0]{UPs.pdf}}}=\sum_{r=0}^l (-1)^{r+(k+1)l} q^{r} E^{(k-l+r)}F^{(r)}\]

Cups and caps can be described as:
\begin{align*}
\vcenter{\hbox{\includegraphics[height=1.2cm, angle=0]{RT4.pdf}}} & \mapsto \vcenter{\hbox{\includegraphics[height=1.2cm, angle=0]{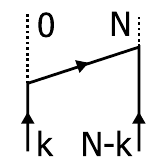}}} = 1_{N}E^{(k)} \cong  1_{-N}F^{(N-k)} =\vcenter{\hbox{\includegraphics[height=1.2cm, angle=0]{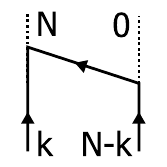}}}\\
\vcenter{\hbox{\includegraphics[height=1.2cm, angle=0]{RT2.pdf}}} &\mapsto \vcenter{\hbox{\includegraphics[height=1.2cm, angle=0]{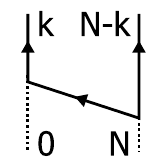}}} = F^{(k)}1_{N} \cong  E^{(N-k)}1_{-N} = \vcenter{\hbox{\includegraphics[height=1.2cm, angle=0]{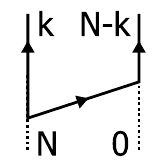}}}
\end{align*}

\begin{wrapfigure}{r}{2.5cm}
  \begin{center}
    \includegraphics[height=3cm]{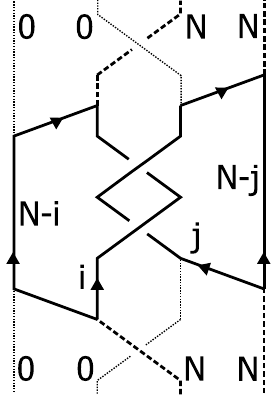}
  \end{center}
\end{wrapfigure} 
Here an issue has to be addressed. The two supposedly isomorphic ways of writing cups and caps as divided power of $E$ or $F$ have different domain or target. This problem is solved by showing that all weight spaces $\Lambda^{i_1}(\mathbb{C}_q^N)\otimes \cdots \otimes \Lambda^{i_s}(\mathbb{C}_q^N)$ are canonically isomorphic to the weight space where all tensor factors $\Lambda^{0}(\mathbb{C}_q^N)$ are permuted to the left and all factors $\Lambda^{N}(\mathbb{C}_q^N)$ are permuted to the right. The isomorphism is realized by braiding the $0$- and $N$-labelled strands (which are indicated by dotted lines) to the outside, using the standard crossing formula. The isomorphism is canonical because it does not depend on how the braiding is realized, as long as all involved crossings have at least a $0$- or $N$-colored strand in them. This is Lemma 7.2 in \cite{Cau2}.

This realization of Reshetikhin-Turaev tangle invariants as images of elements of $_{\mathcal{A}}\dot{U}_q(\mathfrak{sl}_{s})$ under the skew Howe map has the great advantage that many relations necessary for regular isotopy invariance already hold in the quantum group, see \cite{Cau2} Section 7. In fact, quantum skew Howe duality is the main ingredient in the proof of the characterization of the representation category of $U_q(\mathfrak{sl}_N)$ as $\mathfrak{sl}_N$ spider category, as given in \cite{CKM}. 

We summarize the situation:
\begin{thm} (Cautis, \cite{Cau2}, Sections 6 \& 7)
\label{RTviaSHD}
The maps associated to projections of framed oriented tangles colored with irreducible $U_q(\mathfrak{sl}_N)$ representations via the $U_q(\mathfrak{sl}_{s})$ action on the $U_q(\mathfrak{sl}_N)$ representation $\Lambda(\mathbb{C}_q^s\otimes \mathbb{C}_q^N)$ for a suitable $s\in \mathbb{N}$, are invariants of such tangles and agree with the maps defined by Reshetikhin and Turaev up to non-zero scalars and powers of $q$. 
\end{thm}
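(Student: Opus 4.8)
The plan is to follow \cite{Cau2}: realize the Reshetikhin--Turaev map of a tangle diagram as the image of an explicit element of ${}_{\mathcal{A}}\dot{U}(\mathfrak{sl}_{s})$ under the skew Howe map, and then deduce both invariance and the comparison statement from relations that already hold in the quantum group. First I would fix a tangle diagram $D$ whose strands are colored by fundamental representations and isotope it into a ``ladder'' form, i.e.\ a thin horizontal strip in which $D$ is the left-to-right composite of elementary pieces, each occupying a sub-rectangle containing a single cup, a single cap, or a single crossing, with all other strands vertical. Choosing $s$ large enough that $D$ admits such a presentation and padding with $0$- and $N$-labelled phantom strands, every elementary piece is a morphism between $U_q(\mathfrak{sl}_{s})$-weight spaces $V(\lambda)$ of $\Lambda^M(\mathbb{C}_q^s\otimes\mathbb{C}_q^N)$. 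Using the dictionary above for $E^{(k)}$, $F^{(k)}$ and for crossings, cups and caps, each piece is the image under the skew Howe map of an explicit element of $1_{\lambda'}\,{}_{\mathcal{A}}\dot{U}(\mathfrak{sl}_{s})\,1_{\lambda}$; composing these gives a distinguished element $u_D\in{}_{\mathcal{A}}\dot{U}(\mathfrak{sl}_{s})$, whose image is the candidate invariant $\psi(D)$. Gluing adjacent pieces requires Lemma 7.2 of \cite{Cau2} to identify the shared weight space (permuting the $0$-labelled strands to the left and the $N$-labelled strands to the right), so I would begin by checking that this ``canonical isomorphism'' is genuinely independent of the phantom braiding used to realize it.

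The technical heart is then to show that $\psi(D)$ depends only on the tangle, i.e.\ that planar isotopy and Reidemeister moves of types $2$ and $3$ translate, after applying the skew Howe map, into identities in ${}_{\mathcal{A}}\dot{U}(\mathfrak{sl}_{s})$. Concretely I would verify: the distant-commutativity (``sliding past'') moves follow from $E_iF_j=F_jE_i$ for $i\neq j$ together with the weight-idempotent bookkeeping; the cup--cap straightening (zig-zag) relations and the pivotal axioms follow from the $[E_i,F_i]$ commutator and the divided-power identities; Reidemeister $2$ is the statement that the positive- and negative-crossing elements of ${}_{\mathcal{A}}\dot{U}(\mathfrak{sl}_{s})$ are mutually inverse, a finite computation with the crossing sums and the commutator; and Reidemeister $3$ is the braid relation for the crossing elements, which is exactly where the quantum Serre relations enter. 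All of this is carried out in \cite{Cau2}, Sections 6--7; reproducing it requires only careful tracking of the idempotents $1_\lambda$ and of the normalizing $q$-powers and signs. I expect this step --- in particular the braid relation and the independence of the Lemma 7.2 isomorphism --- to be the main obstacle, since it is where the Serre relations, the divided-power structure and the grading shifts all interact at once.

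Finally, for the comparison with the Reshetikhin--Turaev functor itself, I would note that its building blocks are also cups, caps and crossings for fundamental representations, and that by Theorem \ref{ckmthm} (equivalently, the Murakami--Ohtsuki--Yamada calculus) these coincide, as morphisms in $\mathrm{Rep}(U_q(\mathfrak{sl}_N))$, with the webs assigned above up to a non-zero scalar and a power of $q$. Since both constructions assemble $\psi(D)$ by composing the same elementary pieces in the same planar order, the two maps agree up to the product of the per-piece scalars; and since both are now known to be tangle invariants, that product is forced to be an isotopy-independent non-zero scalar times a power of $q$, which yields the theorem.
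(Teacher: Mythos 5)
Your proposal correctly reconstructs the approach of Cautis \cite{Cau2}, which the paper cites without reproducing a proof; the strategy of writing the diagram in ladder form, lifting cups, caps and crossings to explicit elements of ${}_{\mathcal{A}}\dot{U}(\mathfrak{sl}_{s})$, using Lemma 7.2 of \cite{Cau2} for the weight-space identifications, and deriving planar isotopy and Reidemeister identities from quantum group relations is exactly what the paper's surrounding exposition summarizes before stating the theorem. Since the paper gives no proof of its own (only the citation), your sketch is the intended argument and matches it in all essentials, including the final comparison via Theorem~\ref{ckmthm}.
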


\subsection{Categorification}
\label{categ}
Simulating skew Howe duality on a categorical level is an attractive approach to defining categorified Reshetikhin-Turaev invariants, because one can get many necessary relations for the invariance proof from the categorified quantum group in a similar way as one gets relations of the $U_q(\mathfrak{sl}_N)$ representation category from the ordinary quantum group. Categorified quantum groups were defined and studied by Lauda in the case of $\mathfrak{sl}_{2}$, see \cite{Lau1}, \cite{Lau2} and the survey \cite{Lau3}, and Khovanov and Lauda, see \cite{KhL1}, \cite{KhL2} and especially \cite{KhL3}, for the $\mathfrak{sl}_N$ case.\\ 

Once one knows categorified quantum groups, it is another problem to define what it means to have an action of a categorified quantum group that categorifies a quantum group representation, e.g. the $U_q(\mathfrak{sl}_{s})$ representation $\Lambda(\mathbb{C}_q^s\otimes \mathbb{C}_q^N)$. Yet another problem is to show that such categorical actions exist at all. We follow Cautis' approach from \cite{Cau1} and \cite{Cau2} where he addresses the latter two problems. He axiomatically defines a what it means to have a $(\mathfrak{g}, \theta)$ action on a 2-category that is the categorical analogue of a $U_q(\mathfrak{g})$ action. This is a fairly minimal definition and because it, a priori, does not mention the full higher representation theoretic structure of the categorified quantum group, existence of such actions is easier to check in practice. However, in \cite{Cau1} Cautis shows that a $(\mathfrak{sl}_{s}, \theta)$ action always extends to a categorical $\mathfrak{sl}_{s}$ action in the stronger sense of \cite{KhL3}. This guarantees that 2-categories with $(\mathfrak{sl}_{s}, \theta)$ actions satisfy a number of further relations which allow the construction of categorical tangle and link invariants via a categorical analogue of skew Howe duality. To keep the exposition compact, we only present Cautis' definition of a $(\mathfrak{sl}_{s}, \theta)$ action and the additional relations necessary for the definition of categorical tangle and link invariants.

We want to mention that there also exists a categorification of skew Howe duality in a strong sense due to Ehrig and Stroppel \cite{ESt}. It consists of commuting 2-actions of quantum groups on derived categories of abelian categories with Koszul self duality intertwining the actions. Furthermore, the involved categories restrict to the ones used in the Mazorchuk-Stroppel \cite{MSt} construction of $\mathfrak{sl}_N$ link homology.  \\

In the following definition we allow the case $s=\infty$, in which we assume that the Lie algebra $\mathfrak{sl}_{\infty}$ has roots and fundamental weights indexed by $\mathbb{Z}$. $X$ still denotes the weight lattice of $\mathfrak{sl}_{s}$.

\begin{defi} (\cite{Cau2}, Section 2.2) 
A $(\mathfrak{g},\theta)$ action consists of a target graded, additive, $\mathbb{C}$-linear idempotent complete 2-category $\mathfrak{K}$ with:
\begin{enumerate}
\item Objects: indexed by $\lambda \in X$.
\item 1-morphisms: including $E_i 1_\lambda = 1_{\lambda+\alpha_i} E_i$ and $F_i 1_{\lambda+\alpha_i} = 1_\lambda F_i$, where $1_\lambda$ is the identity 1-morphism of $\lambda$.
\item 2-morphisms: including for each $\lambda \in X$ a linear map $Y_\mathbb{C} \rightarrow Hom(1_\lambda, 1_\lambda\langle 2 \rangle)$. 
\end{enumerate}
Here $\langle l\rangle$ is the auto-equivalence given by shifting grading down by $l$.  By abuse of notation we denote by $\theta$ the image of $\theta\in Y_\mathbb{C}$ under such a map.\\
These data are required to satisfy the following relations:
\begin{enumerate}[label=\roman*]
\item $Hom(1_\lambda, 1_\lambda \langle l \rangle)$ is zero if $l < 0$ and one-dimensional if $l=0$ and $1_\lambda \neq 0$. Moreover, the space of maps between any two 1-morphisms is finite dimensional.
\item  $E_i$ and $F_i$ are left and right adjoints of each other up to shifts:
\begin{enumerate}
\item $(E_i 1_\lambda)_R \cong 1_\lambda F_i \langle \langle\lambda, \alpha_i\rangle + 1 \rangle$
\item $(E_i 1_\lambda)_L \cong 1_\lambda F_i \langle - \langle\lambda, \alpha_i\rangle -1 \rangle$.
\end{enumerate}

\item  We have
\begin{align*}
E_i F_i 1_\lambda &\cong F_i E_i 1_{\lambda} \bigoplus_{[\langle\lambda, \alpha_i\rangle]} 1_\lambda \ \ \text{ if } \langle\lambda, \alpha_i\rangle \geq 0 \\
F_i E_i 1_{\lambda} &\cong E_i F_i 1_{\lambda} \bigoplus_{[-\langle\lambda, \alpha_i\rangle]} 1_\lambda \ \ \text{ if } \langle\lambda, \alpha_i\rangle \leq 0
\end{align*}
where $\bigoplus_{p(q)} B$ for a polynomial $p(q)=\sum_ {i\in \mathbb{Z}} a_i q^i \in \mathbb{Z}[q^{\pm 1}]$ means $ \bigoplus_{i\in \mathbb{Z}} (B\langle -i \rangle)^{\oplus a_i}$ and we contract $A\oplus \oplus_{p(q)} B$ to $A\oplus_{p(q)} B$.

\item If $i \neq j$ then $F_j E_i 1_\lambda \cong E_i F_j 1_\lambda$.

\item If $\langle\lambda, \alpha_i\rangle \geq 0$ then map $(I \theta I) \in Hom(E_i 1_\lambda F_i, E_i 1_\lambda F_i\langle 2\rangle)$ induces an isomorphism between $\langle\lambda, \alpha_i\rangle+1$ (resp. zero) of the $\langle\lambda, \alpha_i\rangle+2$ summands $1_{\l+\alpha_i}$ when $\langle \theta, \alpha_i \rangle \neq 0$ (resp. $\langle \theta, \alpha_i \rangle = 0$).\\
If $\langle\lambda, \alpha_i\rangle \leq 0$ then the analogous statement holds for $(I \theta I) \in Hom(F_i 1_\l E_i,F_i 1_\l E_i\langle 2\rangle)$.

\item If $\alpha = \alpha_i$ or $\alpha = \alpha_i + \alpha_j$ for some roots with $\langle \alpha_i,\alpha_j \rangle = -1$ then $1_{\l+r \alpha} = 0$ for $r \gg 0$ or $r \ll 0$.

\item Suppose $i \neq j$ and $\l \in X$. If $1_{\l+\alpha_i}$ and $1_{\l+\alpha_j}$ are nonzero then $1_{\l}$ and $1_{\l+\alpha_i+\alpha_j}$ are also nonzero.
\end{enumerate}
\end{defi}

In \cite{Cau1} Cautis proves that a $(\mathfrak{g},\theta)$ action carries an action of the quiver Hecke algebras and, in the case of $\mathfrak{g}=\mathfrak{sl}_{s}$, such an action extends to a categorical $\mathfrak{sl}_{s}$ action in the sense of \cite{KhL3}. As a consequence, $\mathfrak{K}$ also contains the divided powers $E_i^{(r)}$ or $F_i^{(r)}$, which are adjoint 1-morphisms up to shifts:
\begin{enumerate}[label=\roman*]
\setcounter{enumi}{8}
\item $(E^{(r)}_i 1_\l)_R \cong 1_\l F^{(r)}_i \langle r(\langle\lambda, \alpha_i\rangle+r) \rangle$
\item $(E^{(r)}_i 1_\l)_L \cong 1_\l F^{(r)}_i \langle -r(\langle\lambda, \alpha_i\rangle+r) \rangle$.
\end{enumerate}
The following relations in $\mathfrak{K}$ then follow from Theorem 5.9 and Theorem 5.1 in \cite{KLMS}:
\begin{enumerate}[label=\roman*]
  \setcounter{enumi}{10}
  \item We have
  \[E_i^{(a)}F_i^{(b)}1_\lambda \cong \bigoplus_{j\geq 0}\bigoplus_{{\langle \lambda, \alpha_i\rangle+a-b \brack j}} F_i^{(b-j)}E_i^{(a-j)}1_\lambda\quad \text{if } \langle \lambda, \alpha_i\rangle+a-b\geq 0\]
   \[F_i^{(b)}E_i^{(a)}1_\lambda \cong \bigoplus_{j\geq 0}\bigoplus_{{-\langle \lambda, \alpha_i\rangle-a+b \brack j}} E_i^{(a-j)}F_i^{(b-j)}1_\lambda\quad \text{if } \langle \lambda, \alpha_i\rangle+a-b\leq 0.\]
  
 \item $E^{(a)}E^{(b)} \cong {a+b \brack a } E^{(a+b)}$ and analogously for $F$s.
\end{enumerate}

\comm{
\begin{defi} (\cite{Cau2}, Section 2.2)
A \emph{categorical 2-representation of $\mathfrak{sl}_N$} consists of a graded, additive, $\mathbb{C}$-linear, idempotent complete 2-category $\mathfrak{K}$ with:
\begin{itemize}
\item Objects: Graded, additive, $\mathbb{C}$-linear categories $D(\lambda)$ indexed by integral weights $\lambda\in X$.
\item 1-morphisms: Functors between these categories, including:
\[ 1_\lambda\colon D(\lambda)\to D(\lambda),\quad\text{the identity functor} \] 
\[ E_i^{(r)}1_\lambda\colon D(\lambda)\to D(\lambda + r \alpha_i) \quad \text{and}\quad 1_\lambda F_i^{(r)}\colon D(\lambda+ r \alpha_i)\to D(\lambda)\] 
where $r\in \mathbb{Z}$ and $i\in I$. By convention $E_i^{(r)}1_\lambda =0$ for $r<0$ and  $E_i^{(0)}1_\lambda =1_\lambda$.
\item 2-morphisms: Natural transformations between these functors, including $\theta_i\colon 1_\lambda\to 1_\lambda\langle 2\rangle$ for $i\in I$. Here $\langle l\rangle$ is the auto-equivalence given by shifting grading down by $l$. 
\end{itemize} 
These data are required to satisfy the following relations:
\begin{enumerate}[label=\roman*]
\item (Integrability) For any root $\alpha$, the object $D(\lambda \pm r \alpha)$ is zero for $r\gg 0$.
\item Each category $D(\lambda)$ is (split) idempotent complete and the space of morphisms between any two objects is finite dimensional. Moreover, its Grothendieck group $K(D(\lambda))$ is finite dimensional. Here the Grothendieck group $K(C)$ of a graded, additive, $\mathbb{C}$-linear category $C$ is the graded, free abelian group generated by isomorphism classes of objects of $C$ modulo the relations $[A]=[B_1]+[B_2]$ for all triples $A\cong B_1\oplus B_2$ of objects.
\item If $\lambda\neq 0$ then $Hom_{\mathfrak{K}}(1_\lambda, 1_\lambda \langle l \rangle)$ is zero if $l<0$ and one-dimensional if $l=0$. Moreover the space of 2-morphisms between any two 1-morphisms is finite dimensional.
\item $E_i^{(r)}1_\lambda$ and $1_\lambda F_i^{(r)}$ are right adjoints of each other up to shift:
\begin{enumerate}
\item $(E_i^{(r)}1_\lambda)_R\cong 1_\lambda F_i^{(r)}\langle r(\langle \lambda, \alpha_i \rangle +r) \rangle$ 
\item $(E_i^{(r)}1_\lambda)_L\cong 1_\lambda F_i^{(r)}\langle -r(\langle \lambda, \alpha_i \rangle +r) \rangle.$ 
\end{enumerate}
\item We have 
\[ F_i E_i1_\lambda \cong E_i F_i \oplus_{[-\langle \lambda, \alpha_i\rangle]} 1_\lambda\quad \text{ if }\langle \lambda, \alpha_i\rangle \leq 0 \]
\[ E_i F_i 1_\lambda \cong F_i E_i \oplus_{[\langle \lambda, \alpha_i\rangle]} 1_\lambda \quad \text{ if }\langle \lambda, \alpha_i\rangle \geq 0\]
where $\oplus_{p(q)} B$ for a polynomial $p(q)=\sum_ {i\in \mathbb{Z}} a_i q^i \in \mathbb{Z}[q^{\pm 1}]$ means $ \bigoplus_{i\in \mathbb{Z}} (B\langle -i \rangle)^{\oplus a_i}$ and we contract $A\oplus \oplus_{p(q)} B$ to $A\oplus_{p(q)} B$.
\item We have $E_iE_i^{(r)}1_\lambda \cong \oplus_{[r+1]}E_i^{(r+1)}1_\lambda\cong E_i^{(r)}E_i 1_\lambda$ and likewise for   $F$s instead of $E$s.
\item If $|i-j|=1$ then
 \[E_i E_j E_i 1_\lambda \cong E_i^{(2)} E_j 1_\lambda\oplus  E_j E_i^{(2)}1_\lambda\]
 while if $|i-j|>1$ then $E_i E_j 1_\lambda\cong  E_j E_i 1_\lambda$.
 \item If $i\neq j$ then $F_j E_i1_\lambda\cong E_i F_j 1_\lambda$.
 \item The map $I\theta_j I \colon E_i^{(2)}\langle -1\rangle \oplus E_i^{(2)}\langle 1\rangle \cong E_i 1_\lambda E_i \to E_i 1_\lambda E_i\langle 2\rangle\cong E_i^{(2)}\langle 1\rangle \oplus  E_i^{(2)}\langle 3\rangle$ induces an isomorphism between the summands $ E_i^{(2)}\langle 1\rangle$ on both sides if $i=j$ and zero otherwise.
\end{enumerate}
These are the axioms stated by Cautis. We additionally include a stronger version of v, the commutation relation for divided powers of $E$ and $F$, which is not proved in \cite{Cau2}, but imported from \cite{KLMS} Theorem 5.9, and furthermore a strengthening of vi, which is also proved in the stricter axiomatics of \cite{KLMS}, Theorem 5.1.
\begin{enumerate}[label=\roman*]
  \setcounter{enumi}{9}
  \item We have
  \[E_i^{(a)}F_i^{(b)}1_\lambda \cong \bigoplus_{j\geq 0}\bigoplus_{{\langle \lambda, \alpha_i\rangle+a-b \brack j}} F_i^{(b-j)}E_i^{(a-j)}1_\lambda\quad \text{if } \langle \lambda, \alpha_i\rangle+a-b\geq 0\]
   \[F_i^{(b)}E_i^{(a)}1_\lambda \cong \bigoplus_{j\geq 0}\bigoplus_{{-\langle \lambda, \alpha_i\rangle-a+b \brack j}} E_i^{(a-j)}F_i^{(b-j)}1_\lambda\quad \text{if } \langle \lambda, \alpha_i\rangle+a-b\leq 0.\]
  
 \item $E^{(a)}E^{(b)} \cong {a+b \brack a } E^{(a+b)}$ and analogously for $F$s.
\end{enumerate}
}

The definition of categorical knot and tangle invariants is now very similar to the construction of Reshetikhin-Turaev invariants via quantum skew Howe duality. The main ingredient is an $(\mathfrak{sl}_{s},\theta)$ action on a 2-category $\mathfrak{K}$ which is a lift of the $U_q(\mathfrak{sl}_{s})$ action on $\Lambda(\mathbb{C}_q^s\otimes \mathbb{C}_q^N)$. We now explain what this means.

\begin{defi} Let $M$ be a weight module of $U_q(\mathfrak{sl}_{s})$ or equivalently a $\dot{U}(\mathfrak{sl}_{s})$-module. Then we can consider $M$ as a $\mathbb{C}(q)$-linear category with:
\begin{itemize}
\item Objects: weight spaces $M_\lambda= 1_\lambda M 1_\lambda$ indexed by $\lambda\in X$.
\item Morphisms: $Hom(M_\lambda,M_\mu)$ is given by the image of $1_\mu \dot{U}(\mathfrak{sl}_{s}) 1_\lambda$ under the action.
\end{itemize}

Let $\mathfrak{K}$ be a 2-category with a $(\mathfrak{sl}_{s},\theta)$-action. Then we form the Grothendieck category $K(\mathfrak{K})$, which consists of:
\begin{itemize}
\item Objects: the same as $\mathfrak{K}$
\item Morphisms: Split Grothendieck groups of morphism categories of $\mathfrak{K}$, tensored with $\mathbb{C}(q)$. 
\end{itemize}
Here the split Grothendieck group $K(C)$ of an additive, $\mathbb{C}$-linear category $C$ is the free abelian group generated by isomorphism classes of objects of $C$ modulo the relations $[A]=[B_1]+[B_2]$ for all triples $A\cong B_1\oplus B_2$ of objects. If $C$ is graded, $K(C)$ can be regarded as a $\mathbb{Z}[q^{\pm 1}]$-module with the autoequivalence $\langle -1 \rangle$ acting by multiplication by $q$.\\

We say the $(\mathfrak{sl}_{s},\theta)$-action on $\mathfrak{K}$ lifts the $\dot{U}(\mathfrak{sl}_{s})$ action on $M$ if its Grothendieck category is isomorphic to $M$ considered as a $\mathbb{C}(q)$-linear category. In particular, this means that the nonzero objects in $\mathfrak{K}$ are in bijection with the nonzero weight spaces of $M$ and that via $K(.)$ the 1-morphisms $E_i$ and $F_i$ in $\mathfrak{K}$ are sent to the image of $E_i$ and $F_i$ in $End(M)$ under the $\dot{U}(\mathfrak{sl}_{s})$ action. 
\end{defi}

From now on we assume that $\mathfrak{K}$ is a 2-category with a $(\mathfrak{sl}_{s},\theta)$-action that lifts the  $\dot{U}(\mathfrak{sl}_{s})$ action on $\Lambda(\mathbb{C}_q^s\otimes \mathbb{C}_q^N)$. As Cautis explains in \cite{Cau2}, this is sufficient to define categorical link invariants that lift the Reshetikhin-Turaev invariants. For the definition of categorical tangle invariants we further require that the objects of $\mathfrak{K}$ are categories themselves - with 1-morphisms (resp. 2-morphisms) in $\mathfrak{K}$ acting as functors (resp. natural transformations), such that on K-theory we recover $\Lambda(\mathbb{C}_q^s\otimes \mathbb{C}_q^N)$. This means that the Grothendieck groups of the objects $\lambda$, tensored with $\mathbb{C}(q)$, are isomorphic as vector spaces to the weight spaces $\Lambda(\mathbb{C}_q^s\otimes \mathbb{C}_q^N)_\lambda$ and the 1-morphisms in $\mathfrak{K}$ decategorify to the corresponding linear maps between weight spaces that come from the $\dot{U}(\mathfrak{sl}_{s})$ action on $\Lambda(\mathbb{C}_q^s\otimes \mathbb{C}_q^N)$.\\

The categorified invariants considered in this paper live in $Kom^b(\mathfrak{K})$, the bounded homotopy 2-category of $\mathfrak{K}$.

\begin{defi}
Let $\mathfrak{K}$ be as above. Then $Kom^b(\mathfrak{K})$, the bounded homotopy 2-category of $\mathfrak{K}$, consists of:
\begin{itemize}
\item Objects: the same as in $\mathfrak{K}$.
\item 1-morphisms: bounded complexes of 1-morphisms in $\mathfrak{K}$ with differentials built out of 2-morph\-isms in $\mathfrak{K}$.
\item 2-morphisms: chain maps built out of 2-morphisms in $\mathfrak{K}$ between bounded complexes of 1-morphisms, with homotopy equivalent chain maps identified.
\end{itemize} 
$Kom^b(\mathfrak{K})$ is $\mathbb{Z}\oplus \mathbb{Z}$ graded, the second $\mathbb{Z}$ grading being of homological nature. For the homological grading shift, denoted by $[l]$ in \cite{Cau2}, we write the coefficient $t^l$ and and for a $q$-grading shift $\langle k \rangle$ we write $q^{-k}$.
\end{defi}

Given a projection of a framed oriented tangle $T$ colored by irreducible $U_q(\mathfrak{sl}_N)$ representations and supposing that the Reshetikhin-Turaev invariant of $T$ is a map $V(\lambda)\to V(\lambda')$, the categorified invariant is a chain complex with chain spaces built out of 1-morphisms in  $Mor(\lambda, \lambda')$ in $\mathfrak{K}$ and with differentials built out of 2-morphisms in $\mathfrak{K}$. Different tangle projections produce different complexes, however, they are chain homotopy equivalent via chain maps built out of 2-morphisms. \\

We now describe the construction of the complex associated to a tangle projection. Again we prepare the tangle diagram be replacing downward oriented strands by upward oriented strands of complementary labelling and mark all critical points for the height function by tags.

Cups and caps can be described by the 1-morphisms replacing the divided powers in the quantum group:
\begin{align*}
\vcenter{\hbox{\includegraphics[height=1.2cm, angle=0]{RT4.pdf}}} & \mapsto \vcenter{\hbox{\includegraphics[height=1.2cm, angle=0]{SHD1.pdf}}} = 1_{N}E^{(k)} \cong  1_{-N}F^{(N-k)} =\vcenter{\hbox{\includegraphics[height=1.2cm, angle=0]{SHD3.pdf}}}\\
\vcenter{\hbox{\includegraphics[height=1.2cm, angle=0]{RT2.pdf}}} &\mapsto \vcenter{\hbox{\includegraphics[height=1.2cm, angle=0]{SHD2.pdf}}} = F^{(k)}1_{N} \cong  E^{(N-k)}1_{-N} = \vcenter{\hbox{\includegraphics[height=1.2cm, angle=0]{SHD4.pdf}}}
\end{align*}

The main difference to the decategorified setting is that crossings do not get replaced by linear combination of webs, but by complexes:

\[\vcenter{\hbox{\includegraphics[height=2cm, angle=0]{RT5.pdf}}}\mapsto \quad q^{k-l} E^{(k)}F^{(l)} \to \dots \to q^{1} E^{(k-l+1)}F^{(1)} \to \underline{E^{(k-l)}} \quad \text{if } l\leq k\]
\[\vcenter{\hbox{\includegraphics[height=2cm, angle=0]{RT5.pdf}}}\mapsto \quad q^{l-k}F^{(l)}E^{(k)} \to \dots \to q^{1} F^{(k-l+1)}E^{(1)} \to \underline{F^{(l-k)}} \quad \text{if } l> k\]

The underlined terms are in homological grading $0$. The differential in the first complex is given by the following composition of 2-morphisms:

\begin{align*}
q^{r} E^{(k-l+r)}F^{(r)}1_{l-k} \to~q^{k-l+3r-2}E^{(k-l+r-1)} E 1_{l-k-2r} F F^{(r-1)}1_{l-k}&\\
\cong q^{r-1}E^{(k-l+r-1)}E E_R F^{(r-1)} 1_{l-k} &\to q^{r-1}E^{(k-l+r-1)} F^{(r-1)}1_{l-k} 
\end{align*}
Here the first map is inclusion into the lowest $q$-grading summand and the last map is adjunction. The differential in the second complex is similar. In fact, the maps in the differential are unique as grading-preserving maps up to non-zero scalar multiple, see \cite{Cau2} Lemma 4.3. These complexes are versions of the Chuang-Rouquier (or Rickard) complex from \cite{ChR}.\\ 

As usual for link homology theories, planar composition of diagrams translates into taking formal tensor products of complexes under the replacement described above.\\

Cautis, Kamnitzer and Licata prove in \cite{CKL1} that the complexes associated to positive crossings are invertible in the homotopy category. An inspection of their proof (Proposition 5.4) shows that the inverse is the left (or right) adjoint of this complex. Informally speaking this means interchanging $E$s and $F$s and inverting both the homological and the $q$-grading. The differentials in the resulting complex are again uniquely determined up to non-zero scalar multiple. In order to get invariance under Reidemeister moves of type 2, the inverse complex is used to replace negative crossing. Since taking the mirror image of tangle diagrams or webs also interchanges positive and negative crossings and $E$s and $F$s we have:

\begin{prop}
\label{Mirror} The complexes assigned to a tangle diagram and its mirror image have chain spaces that differ by interchanging $E$s and $F$s and inverting all gradings.
\end{prop}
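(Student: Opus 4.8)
The plan is to reduce the statement to the elementary pieces of a tangle diagram and then propagate it through planar composition. Every tangle diagram is assembled, by vertical and horizontal stacking, from identity strands, cups, caps, and single positive and negative crossings, and under the construction recalled above such a stacking becomes the corresponding formal tensor product of the complexes of $1$-morphisms assigned to the pieces. The operation ``replace every $E$ by an $F$ and every $F$ by an $E$, and invert the homological and $q$-gradings'' is, term-wise on complexes, nothing but passage to the (left or right) adjoint; together with the reversal of left--right order of the pieces that mirroring also performs, it propagates through the formal tensor products out of which the tangle complexes are built. Hence it suffices to verify the claim on each elementary piece, which I would do by induction on the number of pieces, with the identity strand (complex $1_\lambda$ in bidegree $(0,0)$, containing no $E$s or $F$s) as the trivial base case.

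For cups and caps the assigned $1$-morphism is a single divided power placed in homological degree $0$ with no $q$-shift, and it carries two canonical descriptions, $1_N E^{(k)} \cong 1_{-N} F^{(N-k)}$ for caps and $F^{(k)} 1_N \cong E^{(N-k)} 1_{-N}$ for cups, so that exchanging an $E$-description for an $F$-description is already built into the construction (using the identifications recorded above, cf.\ Lemma~\ref{doubletag} and \cite{Cau2}); a reflection of the diagram again produces a cup or a cap, so the claim holds here with a vacuous grading inversion. For a positive crossing colored by $k,l$ with, say, $l\le k$, the assigned complex is $q^{k-l}E^{(k)}F^{(l)}\to\cdots\to\underline{E^{(k-l)}}$, whereas its mirror is a negative crossing, whose assigned complex is \emph{by definition} the inverse, in the homotopy category, of the positive-crossing complex; and by Cautis--Kamnitzer--Licata (\cite{CKL1}, Proposition~5.4) this inverse is the left/right adjoint of that complex, i.e.\ precisely the original complex with every $E$ turned into an $F$, every $F$ into an $E$, and both gradings inverted. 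That is exactly the asserted relationship, and the negative-crossing case is the same argument run backwards (since the positive-crossing complex is then the inverse, hence the adjoint, of the negative-crossing one).

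The point requiring genuine care --- and the one I expect to be the main obstacle --- is to confirm that the ``$E\leftrightarrow F$ plus invert gradings'' operation appearing separately for cups/caps and for crossings is literally one and the same operation once the $\mathfrak{sl}_s$ data are tracked. A left--right reflection of a diagram with $s$ boundary points reindexes the Chevalley generator at position $r$ to position $s-r$ (the diagram automorphism of $\mathfrak{sl}_s$), conjugates the weight idempotents $1_\lambda$ by the longest Weyl group element $w_0$, and reverses the horizontal order in which the pieces sit; this last reversal must be reconciled with the order-reversal that is intrinsic to taking adjoints of composites. I would check that after this reindexing the reflected divided powers coincide on the nose with the adjoint ones, so that the net effect on chain spaces is exactly ``interchange $E$s and $F$s, invert all gradings,'' uniformly across cups, caps and crossings. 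This is routine but sign- and index-sensitive --- it is the precise content of the informal observation made just before the proposition --- and once it is pinned down the inductive assembly of the global statement is immediate.
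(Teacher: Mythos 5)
Your argument matches the paper's implicit proof (the paper gives no formal proof; the proposition simply follows a short discussion paragraph): the two ingredients you isolate are exactly the ones the paper appeals to --- the negative crossing complex is the adjoint, hence inverse, of the positive one by \cite{CKL1} Proposition 5.4, which informally swaps $E$s and $F$s and inverts both gradings, and mirroring a web also interchanges $E$s and $F$s. Your more systematic reduction to elementary pieces (identity strand, cup, cap, crossing) and your flagging of the index-reconciliation and composition-order subtlety stay at the paper's level of informality, so the proposal is a correct fleshing-out of the same approach.
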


This matches well with the crossing formulas in the decategorified setting. Finally we get the categorified analogue of Theorem \ref{RTviaSHD}:

\begin{thm} (Cautis, \cite{Cau2} , Proposition 7.9) The complex associated to a framed oriented tangle diagram colored by fundamental representations via categorical skew Howe duality, as described above, is an invariant of the tangle in the homotopy 2-category $Kom^b(\mathfrak{K})$.
\end{thm}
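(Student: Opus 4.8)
The plan is to cut the statement into a finite list of elementary local moves and to verify each of them inside $Kom^b(\mathfrak{K})$, exploiting that every relation needed is already present in the $(\mathfrak{sl}_s,\theta)$ action. Since framed tangles are taken up to planar isotopy and Reidemeister moves of types $2$ and $3$ only, a standard argument reduces the problem to three checks: (a) invariance under planar isotopy of the diagram, in particular the zig-zag (snake) relations for cups and caps and the moves that slide a crossing complex past a cup or a cap; (b) invariance under Reidemeister $2$; (c) invariance under Reidemeister $3$. In addition I would verify that the auxiliary data of the construction --- which downward strands are reversed, where tags are placed, the number $s$ of auxiliary $\mathfrak{sl}_s$-strands, and how the $0$- and $N$-labelled strands are braided to the sides --- do not affect the homotopy type of the resulting complex. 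For the braiding of the $0$- and $N$-labelled strands I would cite \cite{Cau2} Lemma 7.2, which says this isomorphism is canonical; the sign ambiguity of tag placement is exactly the $\pm 1$ that has been quotiented out by passing to the projective spider category, i.e.\ to relation \eqref{rel7}; and enlarging $s$ only appends $0$-labelled strands whose braiding is trivial.

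The heart of the argument is (b) and (c). The crossing complexes are versions of the Chuang--Rouquier/Rickard complex \cite{ChR} built from the divided powers $E^{(k)}$, $F^{(l)}$ and the dot $\theta$, and by \cite{Cau2} Lemma 4.3 their differentials --- indeed all grading-preserving $2$-morphisms between the relevant $1$-morphisms --- are unique up to a nonzero scalar. For (b) I would invoke \cite{CKL1} Proposition 5.4, where Cautis--Kamnitzer--Licata show that the positive-crossing complex is invertible in the homotopy category with inverse its left (equivalently right) adjoint; since the negative-crossing complex is \emph{defined} to be that adjoint, composing the two and simplifying --- using the decomposition of $E_i^{(a)}F_i^{(b)}1_\lambda$ as a direct sum of terms $F_i^{(b-j)}E_i^{(a-j)}1_\lambda$ from relation (xi), followed by iterated Gaussian elimination --- collapses the tensor product onto the underlined identity $1$-morphism. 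For (c) I would expand the two sides of the braid relation as total complexes of cubes of divided-power $1$-morphisms; relations (xi), (xii) and the $\theta$-relation (v) should identify the two total complexes term by term up to shift, and Lemma 4.3 then promotes this term-wise identification to a homotopy equivalence, because the connecting differentials are forced once the underlying $1$-morphisms are matched. This is the categorical lift of the fact that the Reshetikhin--Turaev crossing maps already satisfy the braid relation inside ${}_{\mathcal{A}}\dot{U}(\mathfrak{sl}_s)$: the point is that every relation invoked lives already in $\mathfrak{K}$, so the decategorified identities of \cite{Cau2} Section 7 lift.

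For (a), the snake relations for a cup stacked on a cap follow from the biadjunctions (ii), (ix), (x) together with relation (i) --- one-dimensionality of $Hom(1_\lambda,1_\lambda)$ in degree zero --- which pins the unit and counit $2$-morphisms down to a scalar; the pitchfork and slide moves (a crossing complex passing a cup or a cap) are then formal consequences of naturality of these units and counits against the Rickard complexes. One checks along the way that the grading shifts built into the cup/cap normalizations and the crossing complexes are arranged so that they cancel in every admissible isotopy; Reidemeister $1$ is deliberately not among the moves, so the framing anomaly is permitted and need not be addressed. Here ``different diagrams of a tangle produce homotopy equivalent complexes'' always means homotopy equivalence as $1$-morphisms in $Kom^b(\mathfrak{K})$, realized by chain maps built from $2$-morphisms of $\mathfrak{K}$.

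The main obstacle I anticipate is (c): the triple tensor product of Rickard complexes is genuinely intricate, and matching the two sides amounts to organizing the cancellation of the ``extra'' summands produced by relation (xi) via iterated Gaussian elimination while tracking which differentials survive. All of this is already carried out in \cite{Cau1}, \cite{CKL1} and \cite{Cau2}; here one only invokes the end product, so the proof in practice consists of assembling these inputs and carefully bookkeeping the grading and sign conventions fixed above.
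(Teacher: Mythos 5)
Your proposal is correct and follows the same path as the paper, which simply defers to Cautis by remarking that the argument parallels the decategorified invariance proof (\cite{Cau2}, Proposition 7.4) and citing the statement from \cite{Cau2}, Proposition 7.9. You have in effect unpacked that citation into the standard checklist --- planar isotopy and zig-zag moves via biadjunction, Reidemeister 2 via \cite{CKL1} Proposition 5.4 and relation (xi) with Gaussian elimination, Reidemeister 3 via term-matching and the uniqueness of grading-preserving differentials from \cite{Cau2} Lemma 4.3, plus the well-definedness of the auxiliary choices via Lemma 7.2 and the projective convention --- which is exactly the content behind the paper's one-line reference.
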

\begin{proof}
The proof is similar to the proof in the decategorified setting via skew Howe duality, see \cite{Cau2} Proposition 7.4.
\end{proof}

\begin{rem} Cautis uses this setup to define categorifications of Reshetikhin-Turaev invariants for labellings by arbitrary irreducible representations. Every irreducible representation $W$ of $U_q(\mathfrak{sl}_N)$ is the highest weight irreducible summand in some $\Lambda^{i_1}(\mathbb{C}_q^N)\otimes\cdots \otimes \Lambda^{i_l}(\mathbb{C}_q^N)$. The main idea is to replace a strand labelled by $W$ by $l$ parallel strands labelled  by $\Lambda^{i_1},\dots, \Lambda^{i_l}$ and to include a highest weight projector, also called a clasp, somewhere along the cable. Following ideas of Rozansky \cite{Roz} Cautis shows that clasps can be realized as infinite twists in the cable. For details see sections 5 and 6 of \cite{Cau2}.

In this paper, we are only interested in invariants of links colored by fundamental representations and thus we can get away with working with bounded complexes. In the more general case one allows complexes to be infinite in one direction and makes additional assumptions on the asymptotic behaviour of the $q$-grading. 
\end{rem}

\subsection{Link homology}
\label{simpl}
To get from the categorified colored $\mathfrak{sl}_N$ invariant of a link that lives in a homotopy 2-category to computationally useful link invariants, an additional step is necessary. We recall that the categorified colored $\mathfrak{sl}_N$ invariant of a link $L$ is a complex $\Psi(L)$ of endomorphisms of the highest weight object $\lambda_h$ which corresponds to the sequence $(0,\dots,0, N,\dots,N)$ of labels on the vertical strands under the skew Howe map. Applying the functor $Hom(1_{\lambda_h},.)$ to this complex gives a complex of finite dimensional graded vector spaces with grading preserving differentials whose homology is the required bi-graded link invariant.

In computational practice, however, it is better to find a complex isomorphic to $\Psi(L)$ whose terms are grading shifts of the identity 1-morphism $1_\lambda$. For chain spaces this is equivalent to evaluating a closed web to an element of the ground ring. We demonstrate this in section \ref{conj2}. That the relations in $\mathfrak{K}$, imposed by the existence of a categorical $\mathfrak{sl}_N$ action, are sufficient to simplify the complex follows from the proof of the main theorem in \cite{CKM}, which is called Theorem \ref{ckmthm} here. An example of a computation simplifying  chain spaces and differentials can be found in \cite{Cau2} section 10.

If the categorical 2-representation additionally satisfies that $Hom(1_\lambda,1_\lambda)$ is 1-dimensional and concentrated in $q$-grading $0$, then the resulting link homology theory is completely determined by the defining relations of the categorified quantum group, see \cite{Cau2} section 7.5 and \cite{LQR} section 4.2. Otherwise one can get deformed theories as in \cite{Lee} and \cite{Gor}.

\section{The colored HOMFLY complexes of rational tangles}
\label{rattan}
In this section we use categorical skew Howe duality to compute explicit twist rules that determine the chain spaces of the colored HOMFLY complexes of positive rational tangles labelled by fundamental representations of $\mathfrak{sl}_N$. 

\subsection{Rational tangles}
All two-strand tangles that we consider have boundary points lying on the corners of the unit square in $\mathbb{R}^2$, which we denote by $NE$, $SE$, $SW$ and $NW$. Such tangles can be vertically stacked and horizontally composed by glueing $N*$ to $S*$ and $*E$ to $*W$ labelled boundary points of the respective tangles. 
Consider the special two-strand tangle $T(1,1)$ that is given by a single crossing with the $SW-NE$ strand lying on top. One can act on two-strand tangles by stacking $T(1,1)$ on top or by composing with $T(1,1)$ on the right; we  refer to these operations as \emph{top twist} and \emph{right twist} respectively. The two trivial tangles consisting of the obvious crossing-less matchings in $N-S$ and $W-E$ directions are called $T(0,1)$ and $T(1,0)$.  

The closure of the set containing the trivial tangles $T(0,1)$ and $T(1,0)$ under the operations of top and right twisting is known as the set of \emph{positive rational tangles}. Positive rational tangles are either trivial or can be described by a sequence of positive natural numbers $[a_1,\dots, a_r]$ which describes the construction process: start with $T(1,0)$ and add $a_r$ top twists, then add $a_{r-1}$ right twists, then again $a_{r-2}$ top twists and so on. We can label such a tangle by the rational number 
$\frac{p}{q}>1$ with \[\frac{p}{q} = a_1+\frac{1}{a_2 +\frac{1}{\ddots +\frac{1}{a_r}}}= [a_1,a_2,\dots a_r].\] 

\[\text{E.g. the tangle } T(7,2)\text{ associated to }\frac{7}{2}=[3,2]\text{ is } \quad \vcenter{\hbox{\includegraphics[height=1.0cm, angle=0]{EX72d.pdf}}} \]

More generally, a rational tangle is defined to be a proper embedding of two arcs $\alpha_1$, $\alpha_2$ in a three-ball $B^3$ with ends of the arcs lying on the boundary of $B^3$, such that there is a homeomorphism of pairs:
\[(B^3, \alpha_1\sqcup \alpha_2)\to (D^2\times I, \{x,y\}\times I)\]
 
It is well known that all rational tangles are equivalent (up to isotopy) to a (possibly trivial) rotation of either a positive rational tangle, as described above, or the mirror image of a positive rational tangle. For a proof of this fact see \cite{KaLa}. We restrict our attention to positive rational tangles. \\

\begin{wrapfigure}{r}{2cm}
  \begin{center}
    \includegraphics[height=2cm]{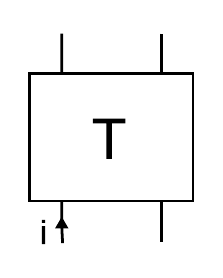}
  \end{center}
\end{wrapfigure} We consider positive rational tangles with additional data. Both arcs of the tangle are equipped with an orientation and a labelling by a fundamental representation $\Lambda^k$ of $U_q(\mathfrak{sl}_N)$ for which we only record the natural number $k$. If we start with an orientation and a labelling on $T(0,1)$ or $T(1,0)$, this induces compatible data on tangles obtained by adding top and right twists. Unless explicitly stated otherwise we assume that the $SW$ boundary point of a tangle is incoming and the corresponding strand is labelled by the color $i$ while the other strand is labelled by a color $j$ with $i\geq j$, see the figure on the right. Clearly this is preserved by adding top or right twists. 

\subsection{Objects in the chain complex}
\label{obj}
We now introduce basic webs that appear as objects in the colored HOMFLY complex of a positive rational tangle.

\begin{defi}
\label{BasicMOY}
The first two rows in Figure \ref{basicGraphs} show our notation for the basic webs that arise in the colored HOMFLY complex of a positive rational tangle labelled by fundamental representations $\Lambda^i$ and $\Lambda^j$ with $i\geq j$. The six variants $UP$, $UPs$, $OP$, $OPs$, $RI$ and $RIs$ correspond to the six possible patterns of boundary data. The first two arguments, e.g. $i$ and $j$ in $UP[i,j,k]$, refer to the colors on the boundary and the third is an index $k$, which we call the \emph{weight} of the web. These webs form bases for the vector spaces of webs with matching boundary data, although we do not immediately use this fact. In intermediate steps we will also use the more general web $UP[i,j,k,l]$ and the operation $.\mapsto (.)^r$ that rotates webs by $\pi$ around the vertical axis, as shown the Figure \ref{basicGraphs}.
\end{defi}
 \begin{figure}[h]
\centerline{
\begindc{\commdiag}[10]
\obj(0,120)[c0]{$UP[i,j,k] =~~ \vcenter{\hbox{\includegraphics[height=2cm, angle=0]{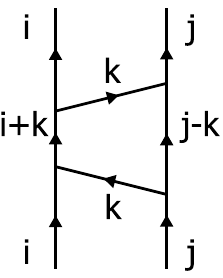}}} $}
\obj(0,60)[c4]{$UPs[i,j,k]=\vcenter{\hbox{\includegraphics[height=2cm, angle=0]{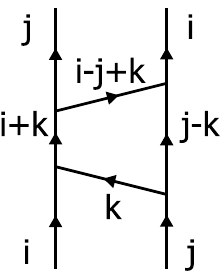}}}$}
\obj(150,120)[c1]{$OP[i,j,k]=~~\vcenter{\hbox{\includegraphics[height=2cm, angle=0]{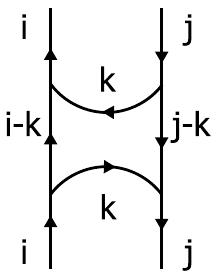}}}$}
\obj(150,60)[c5]{$OPs[i,j,k]=\vcenter{\hbox{\includegraphics[height=2cm, angle=0]{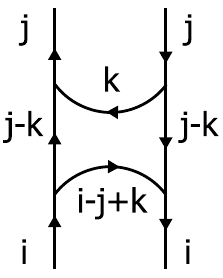}}}$}
\obj(300,120)[c3]{$RI[i,j,k]=\vcenter{\hbox{\includegraphics[height=2cm, angle=0]{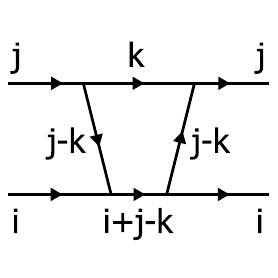}}}$}
\obj(300,60)[c6]{$RIs[i,j,k]=\vcenter{\hbox{\includegraphics[height=2cm, angle=0]{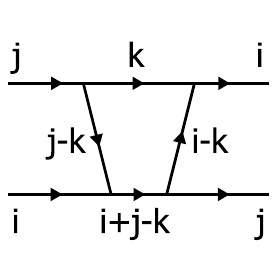}}}$}
\obj(0,0)[c7]{$UP[i,j,k,l]=\vcenter{\hbox{\includegraphics[height=2cm, angle=0]{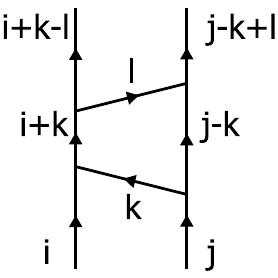}}}$}
\obj(200,0)[c8]{Rotation: $\vcenter{\hbox{\includegraphics[height=2cm, angle=0]{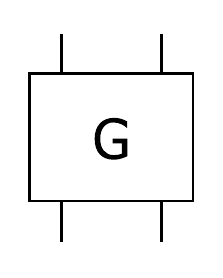}}}\mapsto \vcenter{\hbox{\includegraphics[height=2cm, angle=0]{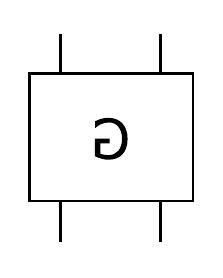}}}$}
\enddc
}
\caption{}
\label{basicGraphs}
\end{figure}~
In Lemma \ref{doubletag} we have seen that two adjacent tags cancel. This shows that the operation of adding a tag close to the end of a strand in a web is an involution and we consider webs that are related by such tag additions as isomorphic. Because we work in the projective setting, it does not matter on which side the tag is placed. Furthermore, relations \eqref{rel4} and \eqref{rel4b} in the spider category say that tags slide past trivalent vertices. In particular we obtain isomorphisms between the special webs introduced above.

\begin{lem} In the projective $U_q(\mathfrak{sl}_N)$ representation category we have:
\label{TagSlideLem}
\begin{align*}
(1)& \quad OPs[i,j,k] \cong UP[N-i,i,i-j+k,k]^r\\
(2)& \quad RI[i,j,k] \cong UP[i,N-i,N-j-i+k,k]\\
(3)& \quad OP[i,j,k] \cong UP[N-j,i,k]^r\\
(4)& \quad RIs[i,j,k]\cong UPs[N-j,i,h]^r\\
(5)& \quad OPs[i,j,k] \cong  RI[N-i,j,k]^r
\end{align*}
\end{lem}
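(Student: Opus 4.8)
The plan is to prove each of the five isomorphisms by the same two-step recipe: first rewrite one of the two webs using Lemma \ref{doubletag} (reversal of a strand at the expense of replacing its label $k$ by $N-k$ and inserting two tags), and then push the resulting tags toward the boundary or past the trivalent vertices using relations \eqref{rel4} and \eqref{rel4b} (tag sliding past trivalent vertices) together with the cancellation of adjacent tags from Lemma \ref{doubletag}. Since we work in the projective setting, all sign ambiguities from relation \eqref{rel1}/\eqref{rel7} disappear, so we never have to track factors of $\pm 1$, and the side on which a tag is placed is immaterial. Throughout I would keep in mind that adding a tag near the end of a strand is the involution identified in the paragraph before the lemma, so "$\cong$" is precisely equality up to such tag-additions.

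Concretely, I would start with identity (2), $RI[i,j,k]\cong UP[i,N-i,N-j-i+k,k]$, since the $RI$-family has a single tag on the right-hand strand and no rotation is involved, making it the cleanest case: reverse the right $(N-i)$-labelled... strand of $UP[i,N-i,N-j-i+k,k]$ using Lemma \ref{doubletag} to turn it into an $i$-labelled strand carrying two tags, then slide one tag across the trivalent vertex via \eqref{rel4} or \eqref{rel4b} and cancel it against a tag already present, and read off that the remaining boundary data and internal labels match those of $RI[i,j,k]$; the bookkeeping of the internal edge labels (which is why $N-j-i+k$ appears as the fourth argument) is the only thing to check. Identities (1), (3), (4) and (5) follow the same pattern but additionally involve the rotation operation $(\cdot)^r$; for those I would first observe that $(\cdot)^r$ commutes with strand reversal and with tag-sliding (it is just a planar isotopy composed with relabelling of the boundary sequence left-to-right), so that applying Lemma \ref{doubletag} to the appropriate strand of the right-hand side and sliding tags produces exactly the left-hand side once one matches up the complementary labels ($i\leftrightarrow N-i$, $j\leftrightarrow N-j$) and recomputes the weight index via $k\mapsto i-j+k$ (for (1)) or $k\mapsto N-j-i+k$ (implicit in (2)) etc.

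I expect the main obstacle to be purely combinatorial rather than conceptual: carefully tracking how the weight index $k$ and the auxiliary label $l$ transform under the combination of strand reversal (which sends a $k$-edge to an $(N-k)$-edge) and the rotation $(\cdot)^r$, and checking that the internal trivalent vertices remain of the four admissible types after the tags have been slid off — in particular that no edge label falls outside $\{0,\dots,N\}$, which would force the web to be zero. A secondary subtlety is making sure that in each case the tag one wishes to cancel really is adjacent (up to isotopy) to the tag introduced by Lemma \ref{doubletag}, i.e. that sliding via \eqref{rel4}/\eqref{rel4b} brings them together rather than leaving a stray tag in the middle of an edge; this is where drawing the diagrams of Figure \ref{basicGraphs} explicitly is essential. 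Once the index arithmetic is pinned down, each of the five cases is a two- or three-move diagrammatic computation, and (5) can alternatively be deduced by composing (1) with (3) and the involutivity of $(\cdot)^r$, which I would include as a consistency check.
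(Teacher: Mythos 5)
Your approach is exactly the paper's: the published proof performs precisely the diagrammatic computation you describe (reverse a strand via Lemma \ref{doubletag}, slide tags through trivalent vertices via relations \eqref{rel4}, \eqref{rel4b}, cancel adjacent tags, and invoke the projective setting to ignore signs), working out case (1) explicitly and declaring the others ``very similar.'' One small slip in your consistency check: (5) does not follow from (1) and (3); it follows from (1) together with (2), since substituting $i\mapsto N-i$ in (2) gives $RI[N-i,j,k]\cong UP[N-i,i,i-j+k,k]$, which rotated is the right-hand side of (1).
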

\begin{proof} 
E.g. for (1) we compute:
\[OPs[i,j,k]=\vcenter{\hbox{\includegraphics[height=1.6cm, angle=0]{G0Ps.pdf}}}\cong\vcenter{\hbox{\includegraphics[height=1.6cm, angle=0]{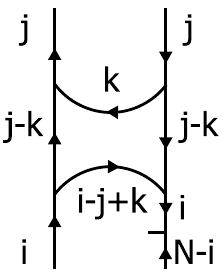}}}=\vcenter{\hbox{\includegraphics[height=1.6cm, angle=0]{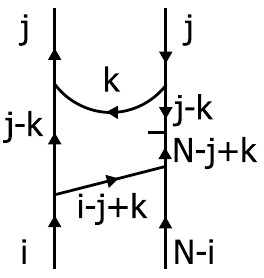}}}=\vcenter{\hbox{\includegraphics[height=1.6cm, angle=0]{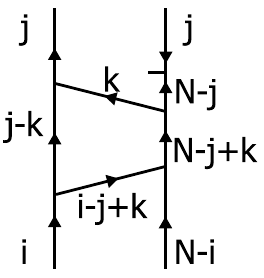}}}\cong \vcenter{\hbox{\includegraphics[height=1.6cm, angle=0]{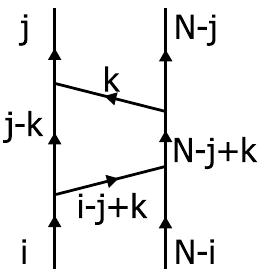}}}= UP[N-i,i,i-j+k,k]^r\]
The other cases are very similar.
\end{proof}

\subsection{Computation of the twist rules}
\label{twistrules}
In order to determine the Poincar\'e polynomial associated to a colored positive rational tangle (or alternatively, the chain spaces in the colored HOMFLY complex), we compute the result of applying a top or right twist to one of the basic webs from Definition \ref{BasicMOY}. It turns out that the dependence of these twist rules on $N\gg 0$ and $i$ for $i\geq j$ can be hidden by introducing the variables $a=q^N$ and $s=q^{i-j}$. Furthermore, we omit global grading shifts in the formulation of the twist rules.

\begin{defi} 
${h \brack k}$ is a Laurent polynomial in $q$ that is supported in degrees $-k(h-k)$ up to $k(h-k)$. We thus define the variants:
\[{h \brack k}^+ := q^{k(h-k)}{h \brack k} \quad \text{and}\quad {h \brack k}^- := q^{-k(h-k)}{h \brack k}\]
which have lowest and highest degree $0$ respectively.
\end{defi} 	
 	
The following proposition relies heavily on results from section 4 in \cite{Cau2} that describe how (generalizations of) crossing complexes can absorb $E$s and $F$s. To be more precise, the result of composing a crossing complex with a divided power 1-morphism is a complex $\tau$ or $\tau'$ which again has chain spaces that are products of $E$s and $F$s and $q$-grading preserving differentials that are uniquely determined up to non-zero scalar. However, in general they are not invertible in the homotopy 2-category. 
\begin{prop} 
Adding top twists to $UP[i,j,k]$, $UPs[i,j,k]$ and $OPs[i,j,k]$ has the following effect on the level of Poincar\'e polynomials:
\comm{\begin{enumerate}
\item $TUP[i,j,k]\cong \sum_{h=k}^j t^{-h} s^{k} q^{k^2+h} {h \brack k}^+ UPs[i,j,h] $
\item $TUPs[i,j,k]\cong \sum_{h=k}^j t^{-h} s^{h} q^{k^2+h} {h \brack k}^+ UP[i,j,h] $
\item $TOPs[i,j,k]\cong \sum_{h=k}^j t^{-h} a^k s^{h-k} q^{k(k-2j)+h} {h \brack k}^+ RI[i,j,h] $
\end{enumerate}}
\begin{align*}
(1)& \quad TUP[i,j,k]\cong \sum_{h=k}^j t^{-h} s^{k} q^{k^2+h} {h \brack k}^+ UPs[i,j,h]\\
(2)& \quad TUPs[i,j,k]\cong \sum_{h=k}^j t^{-h} s^{h} q^{k^2+h} {h \brack k}^+ UP[i,j,h]\\
(3)& \quad TOPs[i,j,k]\cong \sum_{h=k}^j t^{-h} a^k s^{h-k} q^{k(k-2j)+h} {h \brack k}^+ RI[i,j,h]
\end{align*}
\end{prop}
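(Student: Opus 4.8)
The plan is to translate each basic web into the skew Howe picture as a product of divided powers $E^{(\bullet)}$ and $F^{(\bullet)}$ acting on the appropriate weight space, compose with the crossing complex from Section~\ref{categ}, and then absorb the new $E$s and $F$s into the existing word using the results of Section~4 of \cite{Cau2}. Concretely, under the skew Howe dictionary a web of the form $UP[i,j,k]$ corresponds (up to the canonical isomorphisms permuting $0$- and $N$-labelled strands) to a $1$-morphism of the shape $F^{(a)}E^{(b)}1_\lambda$ for weight data determined by $(i,j,k)$ and the fixed boundary colors; likewise $UPs[i,j,k]$ and $OPs[i,j,k]$ have explicit such presentations. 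A top twist is planar composition with the crossing complex, which by the formulas in Section~\ref{categ} is a complex $q^{k-l}E^{(k)}F^{(l)}\to\cdots\to\underline{E^{(k-l)}}$ (or the $F$-version, depending on the local weight). So each of $(1)$, $(2)$, $(3)$ reduces to computing the tensor product of this crossing complex with a single $1$-morphism and simplifying.

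\textbf{Key steps, in order.} First I would fix notation for the weight spaces and verify the skew Howe presentations of $UP$, $UPs$, $OPs$ as products of divided powers — this is bookkeeping using the dictionary and Lemma~\ref{TagSlideLem} to normalize tags. Second, I would invoke the relevant lemma from Section~4 of \cite{Cau2} (the one computing $\tau$, $\tau'$, i.e. the complex obtained by composing a Rickard/Chuang--Rouquier complex with a divided power) to rewrite the composite as a complex whose chain objects are again products of divided powers; the point is that this is exactly the situation those lemmas are designed for, since one side of the crossing is a cap-like configuration and the divided powers it meets get absorbed. Third, I would identify each resulting chain object with one of the basic webs $UPs[i,j,h]$ (resp. $UP[i,j,h]$, resp. $RI[i,j,h]$) for $k\le h\le j$, reading off the summation range from the weight constraints (the quantum binomial ${h\brack k}$ counting multiplicities appears here via relations xi--xii of Section~\ref{categ}, i.e. the $E^{(a)}F^{(b)}$ decomposition). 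Fourth, I would track gradings: the homological shift $t^{-h}$ comes from the position in the crossing complex, the $q$-shifts $q^{k^2+h}$, $q^{k(k-2j)+h}$ and the ${h\brack k}^+$ normalization come from the explicit $q$-degree shifts in the crossing formula and in the adjunction/decomposition isomorphisms, and the $a=q^N$ and $s=q^{i-j}$ substitutions come from the $N$- and $i$-dependence of those shifts (this is where the ${N-k\brack l}$-type factors get renamed, using Relation~\eqref{rel3} and the ${b\brack c}_a$ convention). Since we only care about Poincar\'e polynomials, the differentials are irrelevant: I only need the multiset of chain objects with their bigradings, which makes the identifications in steps three and four into a finite, purely combinatorial check.

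\textbf{Main obstacle.} The hard part is the careful grading accounting — extracting the exact exponents $k^2+h$ versus $k(k-2j)+h$, and the asymmetry between $s^k$ in $(1)$ and $s^h$ in $(2)$, requires keeping precise track of all the $q$-shifts introduced by adjunctions ((ix), (x)), by the decomposition isomorphisms (xi), by the crossing formula's built-in $q^r$ and sign factors, and by the tag-slide isomorphisms of Lemma~\ref{TagSlideLem} when converting back to the chosen basic webs. A secondary subtlety is that the absorption lemmas in \cite{Cau2} Section~4 produce the complexes $\tau$, $\tau'$ only up to the usual caveat that the resulting chain objects need not be invertible 1-morphisms; but since we pass immediately to Poincar\'e polynomials this does not cause trouble, and the decomposition into basic webs is then forced by the weight data. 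I would organize the computation by doing case $(1)$ in full detail and then indicating that $(2)$ and $(3)$ follow by the same method with the minor modifications coming from the different starting word (an extra $F$ or $E$, hence the $s^h$ and the $a^k s^{h-k}$), exactly as the paper's phrase ``the other cases are very similar'' in Lemma~\ref{TagSlideLem} suggests.
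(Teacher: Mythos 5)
Your proposal matches the paper's proof in both strategy and all of its key ingredients: translating the webs $UP$, $UPs$, $OPs$ into divided-power words via the skew Howe dictionary, composing with the Rickard/Chuang--Rouquier crossing complex, absorbing the $E$s and $F$s via Proposition~4.5 and Corollary~4.6 of \cite{Cau2} (the $\tau$, $\tau'$ lemmas you cite), invoking relations xi--xii for the ${h\brack k}$ decomposition, and using Lemma~\ref{TagSlideLem} to normalize tags in case~(3). You correctly identify that only the chain objects with their bigradings matter and that the residual work is grading bookkeeping, which is exactly how the paper's computation is organized.
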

\begin{proof} For the first isomorphism we just translate into Cautis' notation and apply Proposition 4.5 from \cite{Cau2}.

\begin{align*}
TUP[i,j,k] &= 1_{i-j} \tau_{i-j} 1_{j-i} E^{(k)}F^{(k)} \cong t^{-k} q^{k(i-j+k+1)}1_{i-j} \tau_{i-j+k} 1_{j-i-2k} F^{(k)} \\
&\cong \sum_{r=0}^{j-k} t^{-(k+r)} q^{k(i-j+k+1) + r(k+1)}1_{i-j} E^{(i-j+k+r)}F^{(r)}F^{(k)}1_{j-i} 
\end{align*}
After replacing $F^{(r)}F^{(k)}$ by ${r+k \brack k}F^{(r+k)} = q^{-k r}{r+k \brack k}^+F^{(r+k)}$ via relation xii and re-parametrising the summation, this yields (1).
\begin{align*} TUPs[i,j,k] &=  1_{j-i} \tau'_{i-j} 1_{i-j} E^{(i-j+k)}F^{(k)}  \cong  1_{j-i} \tau'_{i-j} 1_{i-j}F^{(k)} 1_{i-j+2k} E^{(i-j+k)} \\
&\cong t^{-k} q^{k(i-j+k+1)} 1_{j-i} E^{(k)}  1_{j-i-2k} \tau'_{i-j+2k}  1_{i-j+2k} E^{(i-j+k)} \\
&\cong t^{-k} q^{k(i-j+k+1)} 1_{j-i} E^{(k)}  1_{j-i-2k} \tau'_{k}  1_{i-j}  \\
&= \sum_{r=0}^{j-k} t^{-(k+r)} q^{(k+r)(i-j+k+1)} 1_{j-i} E^{(k)}E^{(r)}F^{(k+r)}    1_{i-j}  \\
&\cong \sum_{h=k}^{j} t^{-h} q^{h(i-j+k+1)} {h \brack k}1_{j-i} E^{(h)}F^{(h)}    1_{i-j} \\ &= \sum_{h=k}^j t^{-h} q^{h(i-j)} q^{k^2+h} {h \brack k}^+ E^{(h)}F^{(h)} 1_{i-j} 
\end{align*}
which is (2). Here we have used relations xi, Corollary 4.6 and Proposition 4.5 in \cite{Cau2} and relation xii.

For (3) we first use isomorphism (1) from Lemma \ref{TagSlideLem} together with the fact that tags slide through crossing complexes. Then we apply Corollary 4.6 and Proposition 4.5 from \cite{Cau2} and finally isomorphism (2) from Lemma \ref{TagSlideLem} and relation xii.
\begin{align*}
TOPs[i,j,k] &\cong TUP[N-i,i,i-j+k,k]^r = 1_{2j-N} \tau_{N-2j}' F^{(k)} E^{(i-j+k)} 1_{N-2i}\\
 &\cong t^{-k}q^{k(N-2j+k+1)}1_{2j-N} E^{(k)} 1_{2j-N-2k} \tau_{N-2j+2k}'  E^{(i-j+k)} 1_{N-2i}\\ 
 &\cong t^{-k}q^{k(N-2j+k+1)}1_{2j-N} E^{(k)} 1_{2j-N-2k} \tau_{N-j-i+k}'  1_{N-2i}\\
  &= \sum_{r=0}^{j-k} t^{-(k+r)}q^{k(N-2j+k+1) +r(i-j+k+1) }1_{2j-N} E^{(k)}E^{(r)}F^{(N-j-i+k+r)}  1_{N-2i}\\
  &=\sum_{h=k}^j t^{-h} a^k q^{(h-k)(i-j)} q^{k(k-2j)+h} {h \brack k}^+1_{2j-N} E^{(h)}F^{(N-j-i+h)}  1_{N-2i}\\
   &=\sum_{h=k}^j t^{-h} a^k q^{(h-k)(i-j)} q^{k(k-2j)+h} {h \brack k}^+ UP[i,N-i,N-j-i+h,h]\\
    &=\sum_{h=k}^j t^{-h} a^k q^{(h-k)(i-j)} q^{k(k-2j)+h} {h \brack k}^+ RI[i,j,h].
\end{align*}
\end{proof}

\begin{cor} Adding top twists to $OP[i,j,k]$, $RI[i,j,k]$ and $RIs[i,j,k]$ has the following effect on the level of Poincar\'e polynomials:
\begin{align*}
(1)& \quad TOP[i,j,k]\cong \sum_{h=k}^j t^{-h} a^k s^{-k} q^{k(k-2j)+h} {h \brack k}^+ RIs[i,j,h] \\
(2)& \quad TRI[i,j,k]\cong \sum_{h=k}^j t^{-h} a^h s^{k-h} q^{k^2+h(1-2j)} {h \brack k}^+ OPs[i,j,h]\\
(3)& \quad TRIs[i,j,k]\cong \sum_{h=k}^j t^{-h} a^h s^{-h} q^{k^2+h(1-2j)} {h \brack k}^+ OP[i,j,h]
\end{align*}
\end{cor}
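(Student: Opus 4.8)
The plan is to deduce each of the three formulas from the corresponding statement of the preceding proposition by applying the rotation involution $(.)^r$ together with the tag-slide isomorphisms of Lemma \ref{TagSlideLem}. The key observation --- already exploited in the proof of part (3) of the proposition --- is that the isomorphisms of Lemma \ref{TagSlideLem} are realized by sliding tags, and tags slide freely through the crossing complex defining a top twist; consequently the top-twist operator $T$ commutes both with the isomorphisms of Lemma \ref{TagSlideLem} and with $(.)^r$. (Concretely, a $\pi$-rotation about the vertical axis fixes the top-twist crossing, hence sends top twists to top twists, so that $T(\mathcal{T}^r) \cong (T\mathcal{T})^r$.)

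Concretely, for (1) I would start from item (3) of Lemma \ref{TagSlideLem}, $OP[i,j,k]\cong UP[N-j,i,k]^r$, apply $T$ and commute it past the rotation to obtain $TOP[i,j,k]\cong (TUP[N-j,i,k])^r$, expand the right-hand side using part (1) of the proposition, and finally rewrite each summand $UPs[N-j,i,h]^r$ as $RIs[i,j,h]$ via item (4) of Lemma \ref{TagSlideLem}. In this expansion the role of the variable ``$s$'' is played by the one attached to the complementary boundary colors $N-j$ and $i$, namely $q^{N-i-j}$, which equals $as^{-1}q^{-2j}$ under the substitutions $a=q^N$, $s=q^{i-j}$; inserting this into the formula of part (1) yields exactly the coefficient $a^k s^{-k}q^{k(k-2j)+h}$ and the web $RIs[i,j,h]$ claimed in (1). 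Cases (2) and (3) are entirely parallel: (2) comes from part (3) of the proposition via item (5) of Lemma \ref{TagSlideLem} (which gives $RI[i,j,k]\cong OPs[N-i,j,k]^r$, whence $TRI[i,j,k]\cong(TOPs[N-i,j,k])^r$ and each output web $RI[N-i,j,h]^r$ becomes $OPs[i,j,h]$), and (3) comes from part (2) of the proposition using items (4) and (3) of Lemma \ref{TagSlideLem}. In all three cases the same substitution $q^{N-i-j}=as^{-1}q^{-2j}$ reorganizes the monomial prefactors into the stated powers of $a$, $s$ and $q$, while the $t$-powers, the normalized binomials ${h\brack k}^+$, and the summation range $h=k,\dots,j$ are simply inherited from the proposition.

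No new categorical input is needed: the absorption of the divided powers $E^{(k)}$, $F^{(k)}$ into the Chuang--Rouquier complexes $\tau$, $\tau'$ (via the results of Section 4 of \cite{Cau2}) has already been carried out inside the proof of the proposition and is not repeated here. The only genuine work --- and the main source of potential error --- is the grading and color bookkeeping of the second paragraph: keeping track of which boundary color plays the role of ``$i$'' and which of ``$j$'' after each rotation and tag slide, and checking that the single substitution $s\mapsto as^{-1}q^{-2j}$ is indeed the correct one in each of the three cases.
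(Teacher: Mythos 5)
Your proposal is essentially correct and follows the paper's own proof: rotate, slide tags, apply the appropriate case of the preceding proposition, then rotate and slide tags back. Your uniform observation that the substitution $s \mapsto q^{N-i-j} = a s^{-1} q^{-2j}$ converts the prefactors of the proposition into those of the corollary in all three cases is a clean way to package the bookkeeping that the paper carries out line by line, and I have checked that it does yield the stated monomials exactly.

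One small gap: you assert that the summation range $h=k,\dots,j$ is ``simply inherited from the proposition.'' This is true for case (2), but in cases (1) and (3) the proposition formula, after the substitution $j\mapsto i$, produces a sum $\sum_{h=k}^{i}$, not $\sum_{h=k}^{j}$. The range only truncates to $j$ because $UPs[N-j,i,h]^r \cong 0 \cong RIs[i,j,h]$ for $h>j$ (in case (1)) and similarly $UP[N-j,i,h]^r \cong 0 \cong OP[i,j,h]$ for $h>j$ (in case (3)): the middle edge label $k$ in the basic web exceeds the smaller boundary color, so the web vanishes. The paper makes this truncation explicit, and you should too.

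A second, more minor point: the claim that $T$ ``commutes with $(.)^r$'' because the rotation fixes the top-twist crossing needs slight care. The $\pi$-rotation about the vertical axis reflects the planar picture while preserving over/under, so the crossing complex $\tau$ is exchanged with its counterpart $\tau'$ (equivalently, $E$-type and $F$-type divided powers get swapped). What is actually true, and what the paper's computation encodes by passing from $\tau$ to $\tau'$, is that adding a top twist and then rotating equals rotating and then adding the appropriately transported top twist; since the Poincar\'e polynomial computation is insensitive to which of $\tau,\tau'$ is used (both give the same binomial pattern of chain spaces), this does not affect the result, but the justification should be phrased as compatibility rather than literal commutation.

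Apart from these two points, the argument is complete and matches the paper's.
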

\begin{proof} For (1):
\begin{align*}
TOP[i,j,k] \cong TUP[N-j,i,k]^r &\cong \sum_{h=k}^i t^{-h} q^{k(N-i-j)} q^{k^2+h} {h \brack k}^+ UPs[N-j,i,h]^r\\
&\cong \sum_{h=k}^i t^{-h} a^k q^{-k(i-j)} q^{k(k-2j)+h} {h \brack k}^+ RIs[i,j,h]
\end{align*}
Here we have used isomorphism (3) and (4) from Lemma \ref{TagSlideLem} and the rule for $TUP$. The sum gets truncated because $ UPs[N-j,i,h]^r \cong 0\cong RIs[i,j,h]$ for $h>j$. 

For (2):
\begin{align*}
TRI[i,j,k] \cong TOPs[N-i,j,k]^r &= \sum_{h=k}^j t^{-h} a^k q^{(h-k)(N-i-j)} q^{k(k-2j)+h} {h \brack k}^+ RI[N-i,j,h]^r \\
&= \sum_{h=k}^j t^{-h} a^h q^{(k-h)(i-j)} q^{k^2+h(1-2j)} {h \brack k}^+ RI[N-i,j,h]^r \\
&\cong \sum_{h=k}^j t^{-h} a^h q^{(k-h)(i-j)} q^{k^2+h(1-2j)} {h \brack k}^+ OPs[i,j,h] 
\end{align*}
Here we have used isomorphism (5) from Lemma \ref{TagSlideLem} and the rule for $TOPs$.

For (3):
\begin{align*}
TRIs[i,j,k] \cong TUPs[N-j,i,k]^r &\cong 
\sum_{h=k}^i t^{-h} q^{h(N-j-i)} q^{k^2+h} {h \brack k}^+ UP[N-j,i,h]^r\\
&\cong \sum_{h=k}^i t^{-h} a^h q^{-h(i-j)} q^{k^2+h(1-2j)} {h \brack k}^+ OP[i,j,h]\end{align*}
where we have used isomorphisms (4) and (3) from Lemma \ref{TagSlideLem} and the rule for $TUPs$. The sum gets truncated because $UP[N-j,i,h]^r\cong 0 \cong  OP[i,j,h]$ for $h>j$.
\end{proof}

\begin{cor} We compute the right twist rules by reflecting top twist rules.
\begin{align*}
(1)& \quad RUP[i,j,k]\cong\sum_{h=0}^k t^{-h} a^h s^{k-h}q^{k(2j-k) + h(1-2j)} {j-h \brack k-h}^- OP[i,j,h]\\
(2)& \quad RUPs[i,j,k]\cong\sum_{h=0}^k t^{-h} a^h s^{-h}q^{k(2j-k) + h(1-2j)} {j-h \brack k-h}^- OPs[i,j,h]\\
(3)& \quad ROP[i,j,k]\cong\sum_{h=0}^k t^{-h} a^k s^{h-k}q^{-k^2+h} {j-h \brack k-h}^- UP[i,j,h]\\
(4)& \quad ROPs[i,j,k]\cong\sum_{h=0}^k t^{-h} a^k s^{-k}q^{-k^2+h} {j-h \brack k-h}^- UPs[i,j,h]\\
(5)& \quad RRI[i,j,k]\cong\sum_{h=0}^k t^{-h} s^{k}q^{-k(k-2j)+h} {j-h \brack k-h}^- RIs[i,j,h]\\
(6)& \quad RRIs[i,j,k]\cong\sum_{h=0}^k t^{-h} s^{h}q^{-k(k-2j)+h} {j-h \brack k-h}^- RI[i,j,h]
\end{align*}
\end{cor}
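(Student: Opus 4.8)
The plan is to deduce all six right twist rules from the six top twist rules established above by exploiting a symmetry of the construction that exchanges the two kinds of twist. Reflecting a positive rational tangle diagram across the $SW$--$NE$ diagonal --- the line carrying the overstrand of $T(1,1)$ --- defines an operation $\sigma$ which fixes $T(1,1)$, interchanges $T(0,1)$ with $T(1,0)$, and preserves the class of positive rational tangles; after re-orienting strands where necessary, which by Lemma \ref{doubletag} only amounts to inserting pairs of tags, $\sigma$ preserves crossing signs and turns a top twist into a right twist and conversely. Since reflecting a vertical stacking of diagrams yields a horizontal composition, this gives, on the level of colored HOMFLY complexes, the operator identity $R = \sigma\circ T\circ\sigma$, where $R$ and $T$ denote the right and top twist operations. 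It therefore suffices to understand how $\sigma$ acts on the six families of basic webs and on the multigrading, feed this into the formulas for $T$, and translate back.

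First I would determine $\sigma(W)$ for each basic web $W$. Because $\sigma$ fixes the corners $SW$ and $NE$ and exchanges $NW$ with $SE$, it permutes the six boundary patterns; bringing the reflected web into standard form with the tag-slide isomorphisms of Lemma \ref{TagSlideLem} (and cancelling double tags via Lemma \ref{doubletag}) one finds that $\sigma$ sends each of the six families to one of the six families, keeps the colors $i,j$, sends a web of weight $k$ to one of weight $j-k$, and introduces a grading shift that can be read off case by case. The weight reversal $k\mapsto j-k$ is precisely the mechanism by which a right twist lowers the weight whereas a top twist raises it. The shift involves no change of the homological $t$-grading beyond a global factor, while the $q$-, $a$- and $s=q^{i-j}$-exponents are altered by an explicit affine rule; in particular an $a$-dependence appears where none was present, because $\sigma$ trades the $\Lambda^{i}$-coloured parts of a web for $\Lambda^{N-i}$-coloured ones (compare relations \eqref{rel2} and \eqref{rel3}), so that factors of $N$, i.e. of $a=q^N$, are produced.

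With this dictionary in hand, each right twist rule is computed by applying the matching top twist rule to the weight-$(j-k)$ web $\sigma(W)$, applying $\sigma$ term by term to the output, and re-indexing the sum by $h\mapsto j-h$, which turns a sum over $k\le h\le j$ into a sum over $0\le h\le k$. The quantum binomials are brought to the stated shape using ${n\brack m}={n\brack n-m}$ together with $q^{-m(n-m)}{n\brack m}={n\brack m}^-$, which rewrites each ${h\brack k}^+$ occurring in a top rule as the required ${j-h\brack k-h}^-$. I expect the only genuine difficulty to be the bookkeeping: pinning down, for each of the six cases, the exact permutation of web families and the exact grading monomial contributed by $\sigma$ (the shift in the $a$-exponent, the sign of the $s$-exponent, the power of $q$, and which $h$-independent factors are absorbed into the suppressed global shift), and then checking that after substitution and re-indexing the six expressions reproduce the displayed formulas --- including the switch of the truncated quantum binomial from the ${}^+$ to the ${}^-$ variant and the precise prefactor. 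I would verify two or three of the six cases by direct computation and deduce the remaining ones from $\sigma$ by the same argument.
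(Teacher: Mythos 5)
Your geometric starting point is the same as the paper's: reflect the diagram across the $SW$--$NE$ diagonal to turn the problem of adding a right twist into adding a top twist to a transformed web. Where you diverge is in how the reflection is processed afterwards, and this divergence introduces gaps.

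The paper's proof observes that the reflection across the plane through the diagonal and the blackboard normal is orientation-reversing on $3$-space, so it carries a positive rational tangle to its mirror image (positive crossings become negative), and then invokes Proposition \ref{Mirror}: the complexes of a tangle diagram and its mirror differ by inverting all of $t,q,a,s$ and swapping $E$s with $F$s. Translated to webs this interchanges $UP[i,j,k]\leftrightarrow RI[i,j,j-k]$, $UPs[i,j,k]\leftrightarrow RIs[i,j,j-k]$ and $OP[i,j,k]\leftrightarrow OPs[i,j,j-k]$. After negating all exponents, re-indexing $h\mapsto j-h$, and using ${n\brack m}={n\brack n-m}$ together with ${n\brack m}(q^{-1})={n\brack m}(q)$ to convert ${}^+$ into ${}^-$, the six right-twist formulas drop out. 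The whole grading calculation is concentrated in the single, uniform assertion of Proposition \ref{Mirror}.

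You instead insist that $\sigma$ should preserve crossing signs, achieving this by re-orienting one of the two strands. That part of the reasoning is sound at the crossing level. But the trade-offs are substantial, and they are not pure bookkeeping as you suggest. First, after re-orientation the relevant strand carries the label $N-j$ (plus two tags by Lemma \ref{doubletag}), so the claim that $\sigma$ fixes $T(1,1)$ is not literally true --- the output is a crossing with a changed colour; recovering the original labels via the tag-slide isomorphisms of Lemma \ref{TagSlideLem} reintroduces grading shifts that must be tracked. Second, because $N\gg i\geq j$ we have $N-j>i$, so after the relabelling the colour inequality $i\geq j$ under which the twist rules are stated is violated, and the twist rules cannot be fed the relabelled web without further argument. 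Third, and most importantly, you never specify the grading rule for $\sigma$: you assert there is an ``explicit affine rule'' for the $q$-, $a$- and $s$-exponents, but supplying it is precisely the content of Proposition \ref{Mirror} in the paper's argument, and without invoking that proposition (or proving its analogue for your $\sigma$) the derivation is incomplete. What the paper buys by keeping the mirror rather than re-orienting is exactly that this grading rule becomes a one-line uniform statement instead of a six-case computation in shifted colours.
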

 \begin{proof}
	Note that after reflecting across the plane spanned by the $SW-NE$ diagonal and the normal to the blackboard, the problem of adding a right twist transforms into adding a top twist to the reflected web. Proposition \ref{Mirror} extends to the case of complexes associated to knotted webs and thus under reflection $t$, $q$, $a$  and $s$ get replaced by their inverses and $UP(s)[i,j,k]$ and $RI(s)[i,j,j-k]$ as well as $OP[i,j,k]$ and $OPs[i,j,j-k]$ are interchanged.
 	\end{proof}
 	
\begin{rem}
An essential feature of these rules is that their dependence on the rank $N$ and the higher color $i$ can be hidden by introducing $a=q^N$ and $s=q^{i-j}$. This proves Theorem \ref{thmA}.
\end{rem}

\section{A geometric model for the colored HOMFLY complex of a rational tangle}
\label{geom}
In this section we introduce a geometric algorithm for computing the chain spaces in the colored HOMFLY complex of a positive rational tangle $T(p,q)$ labelled with fundamental $\mathfrak{sl}_N$ representations $\Lambda^i$ and $\Lambda^j$ with $i\geq j$.

\subsection{The geometric setup}
We start by drawing a picture that resembles a 2-bridge diagram of the denominator closure of the tangle. Let $\sigma\in\{1,-1\}$ be the parity of the length of the continued fraction expansion of $\frac{p}{q}$ and $par(p)\in\{1,-1\}$ the parity of $p$.

Draw the intervals $[-2,-1]$ and $[1,2]$ on the real axis in $\mathbb{C}$ on a piece of paper, partition them into $p$ parts of equal size and mark the divisions between the parts by small vertical line segments. For both divided intervals, number the $2p$ endpoints of line segments by $0,\dots, 2p-1\in \frac{\mathbb{Z}}{(2p)}$, starting from the points $1$ and $-1$ respectively and proceeding clockwise or anticlockwise, depending on $\sigma=\mp 1$. 
Next, draw an arc $\alpha$ starting at the $0$-labelled point $x=-par(p)\sigma$, then proceeding to the point labelled $q$ on the other interval, intersecting the interval transversely, proceeding to the point labelled $2q$ and so on until it hits the set $\{-2,-1,1,2\}$ again. We require $\alpha$ to have no self intersections and the minimal possible number of intersections with the two intervals. Further, we fix the picture uniquely up to isotopy by requiring that $\alpha$ has no intersections with the interval $(-\infty,-2]$ on the real axis if $\sigma=-1$ and no intersections with the interval $[2,\infty)$ on the real axis if $\sigma=1$.

Next, we draw the imaginary axis labelled $l_p$ and a parallel of it labelled $l_q$ that intersects one of the intervals $[1,2]$ and $[-2,-1]$ on the real axis, depending on $\sigma=\mp 1$, and has the minimal possible number of intersections with the arc $\alpha$. The two vertical lines $l_p$ and $l_q$ then have $p$ and $q$ intersections with $\alpha$ respectively. 
We call these intersections with the left or right vertical \emph{left} and \emph{right primary intersections}. The figure below shows the case $\frac{p}{q}=\frac{5}{2}$ and how this picture can be interpreted as a projection of the tangle. 

\[\vcenter{\hbox{\includegraphics[height=3cm, angle=0]{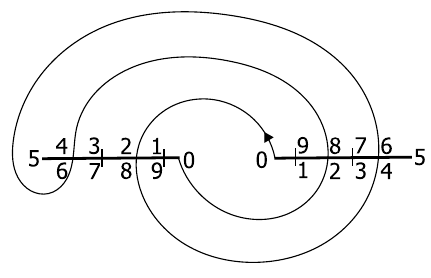}}}\quad\to\quad \vcenter{\hbox{\includegraphics[height=3cm, angle=0]{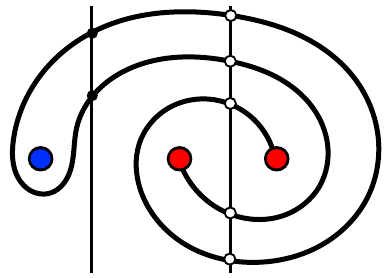}}}\quad = \quad \vcenter{\hbox{\includegraphics[height=3cm, angle=0]{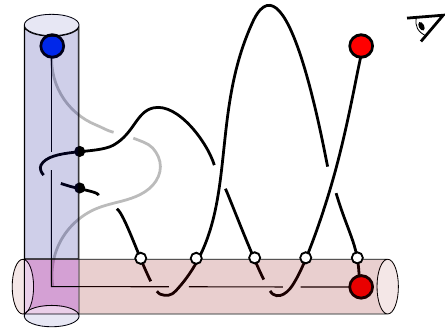}}}\]

The diagrams we draw are essentially 2-bridge diagrams of the denominator closure of the positive rational tangle, with one over-bridge erased. We distinguish the cases according to $\sigma$ and $ par(p)$ and choose a more complicated way of relating the drawing to the rational $\frac{p}{q}$ in order to get a simpler relation between the drawing and the continued fraction expansion of $\frac{p}{q}$. We explain this in the following lemma, whose proof is left to the reader.

\begin{lem} The picture can inductively be constructed in the following way. Start with the trivial diagram. For a top twist, bend the left vertical towards the right one, then flip over the left hand side of the diagram into the middle to straighten the left vertical again. For a right twist, do the analogue for the right vertical. The trivial diagram and a top twist are illustrated below.

\[\text{trivial:}\vcenter{\hbox{\includegraphics[height=1.4cm, angle=0]{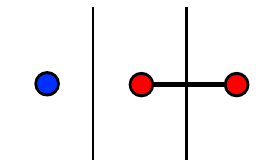}}},\quad\text{top twist:}\vcenter{\hbox{\includegraphics[height=1.4cm, angle=0]{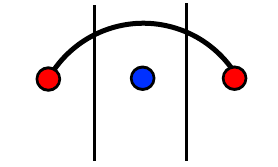}}}  \to \vcenter{\hbox{\includegraphics[height=1.4cm, angle=0]{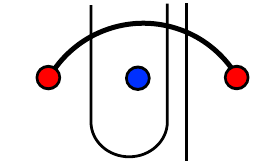}}}=\vcenter{\hbox{\includegraphics[height=1.4cm, angle=0]{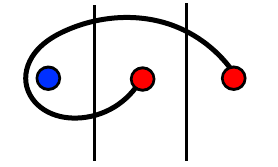}}} \]
\end{lem}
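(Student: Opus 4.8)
The plan is an induction on the total number of twists $a_1+\dots+a_r$ in the continued-fraction expansion $\tfrac pq=[a_1,\dots,a_r]$, carrying along simultaneously the partially built tangle and the combinatorially defined picture $P$.

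I would first record the arithmetic that makes everything work: a single top or right twist replaces the fraction $\tfrac pq$ either by $\tfrac{p+q}{q}$ or by $\tfrac{p+q}{p}$, which of the two occurring being determined by the type of twist and by whether it continues or starts a block of the continued fraction. In particular the number of subdivisions, $p$, always becomes $p+q$, and the new ``step'' is one of $p,q$; so building $T(p,q)$ amounts to a specific finite sequence of such elementary fraction moves, read off from $[a_1,\dots,a_r]$. I would also record how the two parities attached to the picture change along such a move: the length of the continued fraction --- hence the sign $\sigma$ --- is unchanged when a twist continues a block and flips when it starts one, while $\mathrm{par}(p)\mapsto\mathrm{par}(p+q)$. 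The four combinations $(\sigma,\mathrm{par}(p))\in\{\pm1\}^2$ are exactly the cases that the construction of $P$ is engineered to treat uniformly, and keeping track of them is what lets a single ``bend-and-flip'' move be described without further case splitting.

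For the base case one checks by hand that the picture attached to $T(1,1)$ --- one subdivision of each interval, $\alpha$ a single arc with one left and one right primary intersection --- is exactly the trivial diagram of the lemma (and, if one prefers to start the induction from $T(1,0)$, that the first top twist turns its picture into this one). For the inductive step, assume the picture produced by the bending procedure after the first $a_1+\dots+a_r-1$ twists agrees with the combinatorial recipe for the corresponding fraction $\tfrac pq$, and perform one more twist, say a top twist; the right-twist case is handled by the symmetric argument, interchanging the roles of $l_p$ and $l_q$ and of ``left'' and ``right''. Two things must be checked. First, that ``bend the left vertical towards the right one, then flip the left half of the diagram into the middle'' is, on the level of the underlying tangle, an ambient isotopy of the partial tangle together with the adjunction of exactly one crossing on the top, so that the resulting diagram $P'$ is still a legitimate projection of the new tangle; this is a short Reidemeister/isotopy check. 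Second --- and this is the substance --- that $P'$ \emph{is} the combinatorial recipe applied to the new fraction: the $p$ old subdivision points together with the $q$ points dragged across by the bend give the required $p+q$ subdivisions of each interval; the arc $\alpha$, which stepped by $q$ modulo $2p$, now steps by the new value modulo $2(p+q)$; the new vertical $l_p$ meets $\alpha$ in the prescribed number of points and $l_q$ in the other; and the normalization conditions --- $\alpha$ embedded, all intersection numbers minimal, and $\alpha$ disjoint from the distinguished ray on the real axis --- survive. Here one uses that the ``flip'' is precisely the reflection needed to restore the clockwise/anticlockwise labelling dictated by the new value of $\sigma$, and that the new value of $\mathrm{par}(p)$ is what selects the starting endpoint of $\alpha$.

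The hard part will be that second verification: tracking the $2p$ cyclically labelled endpoints, the step-$q$ combinatorics of $\alpha$, and the positions of $l_p$ and $l_q$ all at once through one bend-and-flip, and confirming consistency across the four parity cases. The ``flip'' is the delicate ingredient, since it reverses the cyclic order of the labels and so must be reconciled with the change of $\sigma$. Should the direct bookkeeping become unwieldy, a cleaner route is to characterize $P$ abstractly --- as the unique (up to isotopy rel boundary) diagram in $D^2$ displaying a single strand $\alpha$ that projects to $T(p,q)$, with $\alpha$ embedded, meeting the two intervals minimally, and disjoint from the distinguished ray --- and then to prove separately that (a) such a diagram is unique and (b) the bend-and-flip procedure outputs a diagram with all these properties. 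Since both the combinatorial recipe and the inductive procedure then produce this same normalized diagram, the lemma follows without matching up individual labels.
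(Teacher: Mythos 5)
The paper leaves this lemma's proof to the reader, so there is no argument in the text to compare with; your write-up must therefore stand on its own. Your strategy --- induct on the total number of twists $a_1+\dots+a_r$, verify the base case by hand, and show that one additional twist on the tangle corresponds to one bend-and-flip on the picture --- is the natural one, and the arithmetic you record is correct: continuing the last block of $[a_1,\dots,a_r]$ sends $\tfrac pq$ to $\tfrac{p+q}{q}$, while starting a new block $[1,a_1,\dots,a_r]$ sends it to $\tfrac{p+q}{p}$, so in both cases the subdivision count $p$ becomes $p+q$ and the step of $\alpha$ becomes the other of $p,q$; $\sigma$ flips exactly when the block count $r$ increments; and $\mathrm{par}(p)$ updates by $\mathrm{par}(q)$. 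Your alternative route --- prove the normalized diagram unique and then check the bend-and-flip output satisfies the normalization conditions --- is a sensible way to avoid tracking individual endpoint labels, and it is essentially what the paper already gestures at when it asserts that the minimality and disjointness conditions fix $P$ uniquely up to isotopy.

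The caveat is that you explicitly defer the central verification as ``the hard part,'' so what you have is a correct outline rather than a proof: either route still requires checking that a single bend-and-flip produces a diagram that is embedded, meets both intervals minimally, is disjoint from the distinguished ray on the correct side (which is where the new value of $\sigma$ enters), and starts $\alpha$ at the correct endpoint $-\mathrm{par}(p')\sigma'$ --- and, for the uniqueness route, the uniqueness claim itself must be proved, since the paper also asserts it without argument. One further point to watch in your appeal to a left/right symmetry for the right-twist case: in the paper's construction $l_p$ is always the imaginary axis and $l_q$ sits over whichever interval $\sigma$ dictates, so ``left'' and ``right'' are not literally interchangeable; the asymmetry is controlled by the very parity bookkeeping you set up, and the symmetric-argument shorthand should be unpacked to check it does not hide a sign error.
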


\begin{defi}
We begin the computation of the colored HOMFLY complex by starting with $UP[i,j,0]$ or $OP[i,j,0]$, depending on the orientation on the tangle, and then applying a sequence of top and right twists.  In these two cases we label the three distinguished points by $X^+$, $X^-$ and $Y$ as in the following figures.
\[ \text{Start configuration for UP: }  \vcenter{\hbox{\includegraphics[height=1.4cm, angle=0]{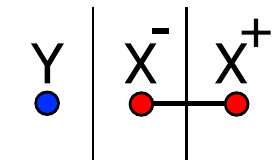}}}\quad \text{Start configuration for OP: } \vcenter{\hbox{\includegraphics[height=1.4cm, angle=0]{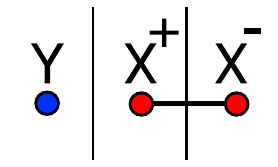}}} \]
\end{defi}

\begin{cor} Via the interpretation of top and right twisting as bending verticals, the six permutations of the labels $X^+ , X^-$ and $Y$ that arise in tangle diagrams correspond to the six types of webs introduced above. This is shown in Figure \ref{configs}.
\end{cor}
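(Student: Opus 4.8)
This corollary is a bookkeeping statement: it claims that the six cyclic/reflective arrangements of the three distinguished points $X^+, X^-, Y$ on the boundary of the disc diagram are in one-to-one correspondence with the six boundary-data patterns $UP, UPs, OP, OPs, RI, RIs$ of Definition \ref{BasicMOY}, and that this correspondence is compatible with the bending-verticals description of top and right twists from the preceding lemma. Since Figure \ref{configs} is being asserted rather than derived, the ``proof'' amounts to checking that the figure is consistent with the two base cases and the two inductive moves. First I would pin down exactly what data a position of $\{X^+, X^-, Y\}$ encodes: the point labelled $Y$ is the free boundary end of the strand not carrying $\alpha$, and the ordered pair $(X^+, X^-)$ records the orientation (in/out) and the color on the $\alpha$-strand endpoints; so a ``permutation of the three labels'' is really a choice of which of the four corners $NW, NE, SW, SE$ carries $Y$ together with an orientation bit, and one checks that exactly six of these are realizable as tangle-diagram boundaries (this matches the six webs).

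**Base cases.** I would verify directly that the start configuration for $UP$ drawn above has its labels $X^+, X^-, Y$ in the positions that, under the dictionary of the previous paragraph, read off the boundary pattern of $UP[i,j,0]$ — i.e. the $i$-colored incoming strand at $SW$, the matching outgoing $i$-strand, and the $j$-colored strand with free end $Y$ — and similarly that the start configuration for $OP$ matches $OP[i,j,0]$. This is immediate from comparing the two ``Start configuration'' pictures with the webs $UP$ and $OP$ in Figure \ref{basicGraphs}.

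**Inductive step.** Here is the real content. Using the lemma's description, a top twist bends the left vertical toward the right and then flips the left half into the middle; a right twist does the mirror operation on the right vertical. I would track what this does to the three boundary points: bending and flipping permutes the corners $NW, NE, SW, SE$ by a specific involution (swapping the two left corners, say, composed with the re-straightening), hence permutes the positions of $X^+, X^-, Y$ in a predictable way, and it also inserts a crossing which toggles orientation bookkeeping in exactly the manner encoded by passing from one of the six webs to another. Concretely, I would check that the permutation of label-positions induced by a top twist on each of the six configurations agrees with the web-level transition recorded in the twist rules of Section \ref{twistrules} (e.g. $UP \mapsto UPs \mapsto UP$ under successive top twists, $UP \mapsto OP$ under a right twist, matching Propositions and Corollaries there), and likewise for a right twist using the reflection symmetry already exploited in the proof of the right-twist corollary. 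Since the six configurations are permuted transitively by these two moves and the base cases are correct, the correspondence propagates to every positive rational tangle.

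**Expected obstacle.** The only genuinely fiddly point is getting the orientation and tag conventions to line up: the ``flip the left half into the middle'' move can reverse the orientation of a strand or move a tag across a trivalent vertex, and one must confirm (using Lemma \ref{doubletag} and the tag-slide isomorphisms of Lemma \ref{TagSlideLem}, which identify, e.g., $OPs[i,j,k]$ with a rotated $UP$ and $RI$ with a rotated $UP$) that the resulting web is isomorphic in the \emph{projective} $U_q(\mathfrak{sl}_N)$ representation category to the one named in Figure \ref{configs}, rather than only to it up to a sign or a reversed tag. Because we work projectively and up to tag-addition isomorphism, all of these ambiguities are harmless, so the argument goes through; but writing it cleanly requires fixing one explicit picture for each of the six configurations and checking the two moves on it, which is precisely what Figure \ref{configs} does. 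Hence the corollary follows by induction on the length of the continued fraction expansion of $\frac{p}{q}$, with the base cases and the two inductive moves verified pictorially as above.
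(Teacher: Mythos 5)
Your proposal is correct and matches the paper's (entirely implicit) argument: the paper offers Figure~\ref{configs} itself as the verification, and what you describe --- checking the two start configurations $UP[i,j,0]$ and $OP[i,j,0]$ against their drawn label placements, then tracking how the bending/flipping move of the preceding lemma permutes the positions of $X^+$, $X^-$, $Y$ under $T$ and $R$, and confirming this matches the hexagonal $T$/$R$ graph in Figure~\ref{configs} --- is precisely what that figure records. Your caveat about tags and projectivity is apt but already resolved by the conventions in force (Lemma~\ref{doubletag} and the remark following Relation~\eqref{rel7}), so nothing further is needed.
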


 \begin{figure}[h]
\centerline{
\begindc{\commdiag}[40]
\obj(45,28)[1a]{$UP\vcenter{\hbox{\includegraphics[height=1.4cm, angle=0]{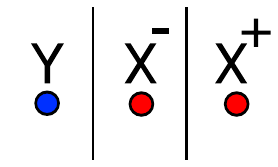}}}$}
\obj(75,28)[1b]{$UPs\vcenter{\hbox{\includegraphics[height=1.4cm, angle=0]{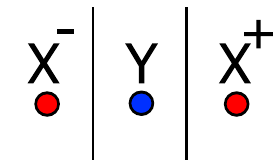}}}$}
\obj(20,14)[2a]{$OP \vcenter{\hbox{\includegraphics[height=1.4cm, angle=0]{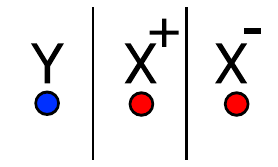}}}$}
\obj(45,0)[2b]{$RIs\vcenter{\hbox{\includegraphics[height=1.4cm, angle=0]{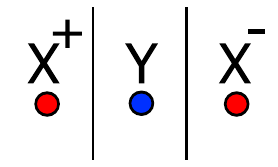}}}$}
\obj(75,0)[2c]{$RI\vcenter{\hbox{\includegraphics[height=1.4cm, angle=0]{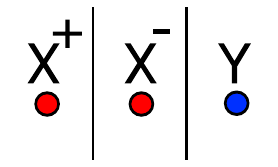}}}$}
\obj(100,14)[2d]{$OPs\vcenter{\hbox{\includegraphics[height=1.4cm, angle=0]{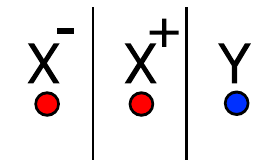}}}$}
\mor{1a}{1b}{$T$}[\atleft,\doubleopposite]
\mor{1a}{2a}{$R$}[\atright,\doubleopposite]
\mor{2a}{2b}{$T$}[\atright,\doubleopposite]
\mor{2b}{2c}{$R$}[\atright,\doubleopposite]
\mor{2c}{2d}{$T$}[\atright,\doubleopposite]
\mor{1b}{2d}{$R$}[\atleft,\doubleopposite]
\enddc
}
\caption{}
\label{configs}
\end{figure}

\subsection{The main theorem}

\begin{defi}
In order to find the generators of weight $h$ for the colored HOMFLY complex with respect to colors $i$ and $j$ in the geometric picture, we replace the left vertical by $h$ parallel copies and the right vertical by $j-h$ parallel copies. We call the first kind of parallels, $w_1,\dots, w_h$, the \emph{weighted verticals} and the others, $u_1,\dots u_{j-h}$, \emph{unweighted verticals}. 
\end{defi}

\begin{thm}
\label{mainthm}
The generators of weight $h$ in the colored HOMFLY complex with respect to colors $i$ and $j$ are in bijection with $j$-tuples \[(x_1,\dots, x_j)\in (w_1\cap \alpha)\times \cdots \times (w_h\cap \alpha)\times (u_1\cap \alpha)\times \cdots \times (u_{j-h}\cap \alpha)\] of intersections points of verticals with $\alpha$. To find the relative gradings of these generators, it is sufficient to determine:
\begin{enumerate}
\item  The grading difference of two generators with equal weight that differ only in one coordinate. 
\item For each $h$, the grading difference of some special generators of weights $h$ and $h-1$.
\end{enumerate}
By iteration rule (1) determines the relative gradings of all generators of the same weight. Rule (2) then fixes the relative gradings between weight groups. \\

Ad (1): The following determines the grading difference between two generators of weight $h$ that differ only in one coordinate. Let $\overline{x}=(x_1,\dots,x_k,\dots, x_j)$ and $\overline{x}'=(x_1,\dots,x_k',\dots, x_j)$ be the two generators with $x_k\neq x_k'$.
Let $\alpha'$ be the segment of $\alpha$ starting at $x_k$ and ending at $x_k'$, $\beta$ the segment of the $k^{th}$ vertical starting at $x_k$ and ending at $x_k'$ and $D$ the domain enclosed by $\alpha'$ and $\beta$ in $\mathbb{R}^2$. $D$ is a singular $2$-chain, i.e. a formal sum of closed discs with multiplicities in $\mathbb{Z}$ determined by the winding of $-\beta\alpha'$ around interior points. In particular, $D$ can be written as a $\mathbb{Z}$-linear combination of \emph{simple discs} which we define to be the closures of bounded path-components of the complement of the union of $\alpha$ with one vertical, equipped with the standard orientation.  The grading difference between $\overline{x}$ and $\overline{x}'$ is the product of two terms:
\begin{itemize}
\item (Additive part $Q$) The graded intersection number of $D$ with the set of distinguished points $\{X^+, X^-, Y\}$ where intersections with a simple disc $D_s$ count as:
\[D_s\cdot X^+=\begin{cases}
\frac{a^2}{t q^{4j-2} s^2}&\text{ if}~ X^+\in D_s\\ 
1 &\text{ if}~ X^+\notin D_s \\
\end{cases}, ~ D_s\cdot X^- = \begin{cases}
\frac{q^2 s^2}{t}&\text{ if}~ X^-\in D_s\\ 
1 &\text{ if}~ X^-\notin D_s \\
\end{cases}, ~ D_s\cdot Y = \begin{cases}
\frac{q^2}{t}&\text{ if}~ Y\in D_s\\ 
1 &\text{ if}~ Y\notin D_s \\
\end{cases}\]
\item (Non-additive part $q-Q$) The graded intersection number of $D$ with the other intersection points $\{x_1,\dots, x_{k-1},x_{k+1},\dots x_j\}$, with a simple disc $D_s$ contributing:
\[D_s\cdot x_r = \begin{cases}
q^4 &  \quad\text{if}\quad x_r\in D_s^\circ\\ 
q^2 & \quad\text{if}\quad x_r \in \partial D_s \\
1 & \quad\text{else}
\end{cases}\]
\end{itemize}

Ad (2): The following determines the grading shift between two generators of weight $h$ and $h-1$ that agree outside the $h^{th}$ coordinate and whose $h^{th}$ coordinates are related by sliding the $h^{th}$ vertical (which is the innermost vertical) from the weighted to the unweighted side. We can think of this sliding as geometric realization of the differential tying together the two generators. The grading of this differential depends on which distinguished point the vertical crosses while sliding:
\[\text{Vertical sliding across } \begin{cases}
Y\\ 
X^-\\
X^+\\
\end{cases} \text{ as in }
\begin{cases}
UPs\text{ and }RIs\\ 
UP\text{ and }RI \\
OP\text{ and }OPs\\
\end{cases}\text{ causes shift by } \begin{cases}
\frac{t}{q}\\ 
\frac{t}{q s}\\
\frac{t q^{2j-1} s}{a}\\
\end{cases}\]

If two generators are related by sliding over the innermost vertical, we say that the generators are related by a \emph{simple slide}.

\end{thm}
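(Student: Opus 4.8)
The plan is to argue by induction on the length of the sequence of top and right twists that builds $T$ from a trivial tangle, carrying along as inductive hypothesis the full conclusion of Theorem~\ref{mainthm}: the bijection between generators and $j$-tuples of intersection points, together with the validity of the grading rules (1) and (2). The base case is the trivial tangle, where $\alpha$ is an unknotted arc meeting each vertical in a single point, only weight $0$ occurs, the unique $j$-tuple matches the single generator of $UP[i,j,0]$ (resp.\ $OP[i,j,0]$), and both grading rules are vacuous. For the inductive step it suffices to treat a single top twist: a right twist becomes a top twist after reflecting the whole picture across the $SW$-$NE$ diagonal, a reflection which --- by the extension of Proposition~\ref{Mirror} already used in section~\ref{twistrules} to pass from the top twist rules to the right twist rules --- inverts all of $a,q,s,t$, and under which the geometric rules of the theorem, being built from winding numbers and intersection counts, are manifestly invariant.

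So let $T'$ be obtained from $T$ by one top twist. The key geometric input is the description of a top twist as bending the left vertical around the right vertical and then flipping the left half of the diagram back to straighten it (the inductive construction of the diagram in section~\ref{geom}): under this move the left vertical passes from $p$ to $p+q$ intersections with $\alpha$, the right vertical keeps its $q$ intersections, and the distinguished points $X^{\pm},Y$ get permuted exactly as $UP\leftrightarrow UPs$, $RI\leftrightarrow RIs$, $OP\leftrightarrow OPs$ in Figure~\ref{configs}. First I would establish the bijection. An intersection point on a weighted vertical of $T'$ lies either in the portion inherited from the old left vertical ($p$ choices) or in the folded portion, which runs parallel to the old right vertical ($q$ choices); recording, for a weight-$h$ tuple of $T'$, how many $k\leq h$ of its weighted coordinates fall in the first portion, which ones they are, and how the folded coordinates interleave, produces a weight-$k$ tuple of $T$ together with a lattice path in a $k\times(h-k)$ box. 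The number of such paths equals $\binom{h}{k}$, i.e.\ the number of terms of ${h\brack k}$ as a sum of monomials, and the identity $\sum_{k=0}^{h}\binom{h}{k}p^kq^{h-k}=(p+q)^h$ turns the geometric count $(p+q)^hq^{j-h}$ of weight-$h$ tuples of $T'$ into $\sum_{k\leq h}$(weight-$k$ tuples of $T$)$\cdot$(terms of ${h\brack k}$), which is exactly the multiplicity with which the relevant basic web appears in the twist rule $TUP[i,j,k]\cong\sum_{h=k}^{j}t^{-h}s^{k}q^{k^2+h}{h\brack k}^{+}UPs[i,j,h]$ (and similarly for the other five webs, via the twist rules of section~\ref{twistrules} and Lemma~\ref{TagSlideLem}). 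Since by induction the count for $T$ is correct, so is the count for $T'$.

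What remains is to check that the geometric grading rules, evaluated on the picture for $T'$, reproduce the gradings that the twist rules predict from the inductively correct gradings of $T$; this is precisely the statement that the geometric model \emph{observes the twist rules}. For rule (1), comparing two weight-$h$ generators of $T'$ differing in one coordinate, I would split into three cases: (a) the differing coordinate lies on an unweighted vertical of $T'$, where the enclosed domain $D$, its intersections with the distinguished points (whose rule-(1) weights do not depend on the configuration type, hence survive the permutation of $X^{\pm},Y$), and its intersections with the remaining coordinates all transport back to the corresponding data for $T$, so the grading difference equals the one for $T$; (b) both tuples descend to weight $k$ in $T$ with the differing coordinate weighted, where the same holds once $D$ is pulled back through the flip, paying attention that the flip reverses the orientation of the folded region and hence the sign of the winding multiplicities of the singular $2$-chain there; (c) the two tuples descend to the same $T$-tuple but to lattice paths differing by one box, where the $q$-grading change must be computed directly and shown to equal the exponent difference of the two corresponding monomials of ${h\brack k}$ --- and it is exactly here that the local weights $q^4,q^2,1$ for passing a coordinate through the interior, boundary, or complement of a simple disc, together with the overall factors $q^{k^2}$, $s^{k}$, $t^{-h}$ carried by the twist rule, are forced. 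Rule (2) is handled the same way: a simple slide of the innermost vertical of $T'$ from the weighted to the unweighted side corresponds, through the fold, either to a simple slide in $T$, absorbing the shift $t/q$, $t/(qs)$ or $tq^{2j-1}s/a$ predicted by whichever of $Y,X^-,X^+$ is crossed, or to a one-box change of a lattice path, whose contribution is again read off the explicit monomial. Finally, the assertion that (1) and (2) determine all relative gradings is the easy observation that the weight-$h$ tuples form a grid connected by single-coordinate moves and that each pair of consecutive nonempty weight groups is joined by a simple slide of suitable special generators.

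I expect the main obstacle to be the bookkeeping in the inductive step: tracking, through the bend-and-flip, exactly which simple discs contain which distinguished points and which other coordinates; controlling the orientation reversal on the folded region and its effect on the multiplicities of the singular $2$-chain $D$; and matching the lattice-path combinatorics underlying ${h\brack k}^{+}$ to the $q$-, $s$-, $a$-, $t$-weights produced by the twist rules. In short, the entire content is to verify that each twist rule of section~\ref{twistrules} is reproduced, term by term, by the three geometric operations of changing a coordinate, sliding the innermost vertical, and bending a vertical; everything else is the binomial theorem and elementary connectivity.
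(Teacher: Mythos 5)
Your overall plan---induct on the twist sequence, start from the trivial tangle, and show in the inductive step that the geometric rules reproduce each twist rule of section~\ref{twistrules}---is exactly the strategy the paper uses, and your decomposition into bijection, intra-weight grading changes, inter-weight simple slides, and the binomial-coefficient sorting is the same content as the paper's parent/child/clone machinery and Lemma~\ref{sortlem}. Two points, however, require attention.

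First, there is a genuine gap: you never establish that iterating rule~(1) yields a \emph{well-defined} grading difference. You observe that the weight-$h$ tuples are connected by single-coordinate moves, which gives existence of a path, but not that two different paths between the same pair of generators compute the same monomial. This is not automatic: the non-additive contributions $q^4,q^2,1$ depend on the \emph{other} coordinates, which move as you traverse the path, so a priori the answer is order-dependent. The paper devotes Lemma~\ref{welldefgrad} precisely to this consistency check, reducing to the case of two simple discs and analysing the three configurations (disjoint; intersecting with opposite orientation; intersecting with matching orientation) to show that the two orders of traversal either produce no shifts, cancelling shifts, or equal shifts at different points. Without such an argument the phrase ``by iteration rule (1) determines the relative gradings'' is not well-posed, and the inductive comparison with the twist rules (which implicitly chooses particular paths) does not by itself certify all other paths.

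Second, the claim that a right twist reduces to a top twist by reflecting across the $SW$--$NE$ diagonal and that the geometric rules are ``manifestly invariant'' is too quick. The reflection is orientation-reversing, so winding multiplicities of the singular $2$-chain $D$ change sign, and simultaneously $a,q,s,t$ get inverted; it also exchanges $h\leftrightarrow j-h$, swaps left and right verticals, and permutes the configuration types $UP\!\leftrightarrow\! RI$, $UPs\!\leftrightarrow\! RIs$, $OP\!\leftrightarrow\! OPs$. These effects do compose to an invariance of the weight assignments in rules~(1) and~(2)---e.g.\ the sign flip on $D$ inverts $\frac{a^2}{tq^{4j-2}s^2}$ while the variable inversion inverts it back---but the verification that each of the three weights in rule~(1), the three shifts in rule~(2), and the $q^4,q^2,1$ weights all survive this double inversion together with the relabelling of special points and verticals needs to be spelled out; the paper instead handles top and right twist cases directly (Cases TUP and ROP) rather than relying on this symmetry at the geometric level.
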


The proof of this theorem occupies the rest of this section and is split into several lemmata. Then Theorem \ref{thmB} follows immediately from the fact that the $a$-, $s$-, $t$- and $Q$-gradings are additive.

\begin{lem}
\label{welldefgrad}
The grading differences stated in the theorem define a $\mathbb{Z}^4$ grading on generators with shifts denoted by powers of $a$, $q$, $t$ and $s$. 
\end{lem}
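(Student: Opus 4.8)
The statement to prove is Lemma \ref{welldefgrad}: that the grading-difference prescriptions in Theorem \ref{mainthm} (rules (1) and (2)) actually assemble into a well-defined $\mathbb{Z}^4$-grading on the set of generators. What needs checking is \emph{consistency}: since the rules only prescribe \emph{relative} gradings along certain moves (single-coordinate changes at fixed weight, and simple slides between adjacent weights), one must verify that any two sequences of such moves connecting the same pair of generators yield the same total grading shift. The plan is to reduce this to a small number of local consistency checks and then to verify each against the twist rules already computed in Section \ref{twistrules}.

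First I would set up the combinatorial picture: fix the weight-$h$ generators as $j$-tuples $(x_1,\dots,x_j)$ with $x_k \in w_k\cap\alpha$ for $k\le h$ and $x_k\in u_{k-h}\cap\alpha$ for $k>h$, as in Theorem \ref{mainthm}. Rule (1) defines, for each coordinate $k$, a grading difference $g_k(\overline{x},\overline{x}')$ whenever $\overline{x},\overline{x}'$ differ only in the $k$-th slot; rule (2) defines a grading difference along a simple slide of the innermost vertical. I would first observe that $g_k$ as defined is automatically a cocycle \emph{in the $k$-th coordinate alone}: the domain $D$ connecting $x_k$ to $x_k'$ is a signed $2$-chain and the ``graded intersection number'' is defined additively on $2$-chains, so for a third point $x_k''$ the domains satisfy $D(x_k,x_k') + D(x_k',x_k'') = D(x_k,x_k'')$ as $2$-chains, and the multiplicative grading contributions (for $\{X^\pm,Y\}$ and for the other intersection points $x_r$) are therefore compatible. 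Hence along any single coordinate the relative grading is consistent and path-independent.

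The real content is the interaction between different coordinates and between rule (1) and rule (2). The key local checks are: (i) \textbf{commuting squares for rule (1)}: changing coordinate $k$ then coordinate $l$ (with $k\ne l$) gives the same total shift as changing $l$ then $k$. This requires that the ``non-additive'' contribution of the domain $D_k$ (connecting $x_k$ to $x_k'$) evaluated against the point $x_l$ agrees, after swapping, with the contribution of $D_l$ evaluated against $x_k$ — i.e. a symmetry of the pairing $D_s\cdot x_r$ between a simple disc and an interior/boundary point; this is exactly the statement that the $q^4$/$q^2$/$1$ pairing is symmetric under exchanging which of the two verticals is cut, which follows because a point $x_l$ lies in the interior (resp. boundary, resp. exterior) of the disc cut by $w_k$ precisely when the corresponding incidence holds for $x_k$ relative to $w_l$ — a planar duality of the two vertical-and-$\alpha$ configurations. (ii) \textbf{compatibility of a simple slide with a rule-(1) move in an outer coordinate}: sliding the innermost vertical from weighted to unweighted side and then changing an outer coordinate $x_r$ should agree with doing these in the other order; this reduces to checking that the rule-(2) shift is insensitive to $x_r$ except through the already-accounted graded intersection, plus the fact that sliding the innermost vertical past the $r$-th vertical's intersection point contributes symmetrically. (iii) \textbf{the weight-changing loop}: going up from weight $h-1$ to $h$ by a simple slide and back down by another simple slide (possibly at a different position, reached via rule-(1) moves) must give zero net shift; this is where one genuinely uses the twist rules — the three cases of rule (2) list shifts $t/q$, $t/(qs)$, $tq^{2j-1}s/a$ according to which distinguished point is crossed, and these must match the grading data extracted from Proposition~and its Corollaries in Section~\ref{twistrules} when one passes between $UP$, $UPs$, $OP$, $OPs$, $RI$, $RIs$ as in Figure \ref{configs}. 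I would verify each of the three shift values directly against the corresponding twist-rule formula (e.g. the leading term $h=k+1$ in $TUP$, $TUPs$, $TOPs$), which is a finite computation.

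I expect the main obstacle to be check (i), the commuting-squares statement for rule (1): one must set up carefully the planar-combinatorial fact that the incidence relation ``$x_l$ is inside/on/outside the simple disc cut off by the $k$-th vertical and $\alpha$'' is symmetric in $k\leftrightarrow l$, and that the signed multiplicities in the $2$-chain decomposition into simple discs behave well when both coordinates move. Once this planar symmetry is established, the rest is bookkeeping: every other consistency relation either follows from the $2$-chain additivity of graded intersection numbers (which handles path-independence within a coordinate and the interaction of a slide with the additive part $Q$) or from a direct match with the explicit twist-rule exponents in Section \ref{twistrules} (which handles the weight-changing loops). Assembling these, the relative-grading function extends consistently to all generators, giving the asserted $\mathbb{Z}^4$-grading; the overall normalization (global grading shift) is irrelevant since only relative gradings are claimed.
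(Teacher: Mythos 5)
Your overall framing---reduce to local consistency checks, then verify each against the twist rules---matches the structure of the paper's proof, and your treatment of the additive part via $2$-chain additivity and of the weight-changing calibration (rule (2)) is fine. But there is a genuine gap in the key step, check (i), and it is precisely the step you flag as the main obstacle.

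You propose to establish commutativity of two rule-(1) moves by a ``planar duality'' giving a term-by-term symmetry of the pairing: $D_k\cdot x_l = D_l\cdot x_k$ (the point $x_l$ lies in the interior/boundary/exterior of the simple disc cut by the $k$-th vertical iff $x_k$ does so relative to the $l$-th). This symmetry is \emph{false} in general, and the paper's own proof contains a counterexample. What actually needs checking is the weaker \emph{product} identity $(D_1\cdot x_2)(D_2\cdot y_1) = (D_2\cdot x_1)(D_1\cdot y_2)$, where $D_1$ connects $x_1$ to $y_1$ and $D_2$ connects $x_2$ to $y_2$. The paper verifies this by a case analysis on whether $D_1$ and $D_2$ intersect and on their relative orientation. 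In the case of intersecting discs with opposite orientations, one order of composition contributes no non-additive shifts at all while the other produces two \emph{cancelling} shifts ($q^{\pm 2}$ or $q^{\pm 4}$): in that case, for instance $D_1\cdot x_2 = 1$ but $D_2\cdot x_1 = q^{\pm 2}$, so your proposed symmetry fails even though the products agree. In the matching-orientation case, the shifts appear at different intersection points but have equal value, again giving the product identity without term-by-term symmetry.

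Secondarily, your ``the rest is bookkeeping'' for the general case (generators differing in several coordinates) glosses over a step that the paper makes explicit: one reduces to a canonical word in simple-disc moves by permuting adjacent moves (using the two-coordinate case just proved) and cancelling inverse pairs, arriving at a reduced expression ordered by coordinate index. This is needed to conclude path-independence, and it is not an automatic consequence of $2$-chain additivity (which only handles path-independence within a single coordinate). To repair your proof: replace the planar-duality claim by a direct verification of the product identity via the orientation-based case analysis, and spell out the reduced-expression argument for the multi-coordinate case.
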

\begin{proof} While rule (2) does not cause any ambiguities, there are several ways to compute the grading difference of two generators of equal weight by repeated application of rule (1). We need to check that all of these ways yield the same result. This is easily seen for the additive part. It remains to check the non-additive part.\\

Consider first the case of generators that differ in exactly two coordinates, say $\overline{x}=(x_1,x_2,\dots)$ and $\overline{y}=(y_1,y_2,\dots)$, where the two domains $D_1$ and $D_2$ connecting $x_1$ to $y_1$ and $x_2$ to $y_2$ respectively are simple discs, possibly with the opposite orientation. We need to check that the non-additive part of the grading that we get from $D_2\circ D_1\colon \overline{x}\to (y_1,x_2,\dots) \to \overline{y}$ is the same as the one from $D_1\circ D_2\colon \overline{x}\to (x_1,y_2,\dots) \to \overline{y}$. It is sufficient to consider the non-additive component that comes from intersections with the first two coordinates, since all other contributions will be equal a priori. 
There are three cases to consider:
\begin{enumerate}
\item $D_1 \cap D_2 = \emptyset$, the trivial case.
\item $D_1$ and $D_2$ intersect and have opposite orientations. In this case, one way of composing $D_1$ and $D_2$ and intersecting with the first two coordinates produces no non-additive grading shifts and the other way produces two cancelling grading shifts. The cancelling shifts are $q^{\pm 2}$ or $q^{\pm 4}$, depending on whether the discs intersect only along their boundary or in their interior.
\item $D_1$ and $D_2$ intersect and have matching orientations. Here the two ways of composing $D_1$ and $D_2$ produce non-additive grading shifts at different intersection points, but of equal value  $q^{\pm 2}$ or $q^{\pm 4}$, depending on whether the discs intersect only along their boundary or in their interior.
\end{enumerate}

In the general case of weight $h$-generators that might differ in several coordinates, every way of getting from one generator to the other by transforming one coordinate at a time can be described as a sequence of changes via simple discs. By the previous argument, we can permute these changes and cancel redundant pairs of inverse changes without altering the grading shift. This way one can reach a unique reduced expression which consists of the minimal number of changes via simple discs and is ordered by the indices of the coordinates in which the changes take place.  
\end{proof}

It is clear that our picture-way of calculating the chain spaces in the colored HOMFLY complex produces only one generator in the case of the trivial tangle. With this as the start of an induction proof it suffices to show that the geometric algorithm observes the twist rules from section \ref{twistrules}.\\

To prepare some notation for the following definition, we look at a top twist applied to a diagram $P_1,$ producing a diagram $P_2$.  If we only draw the verticals $l_p$ and $l_q$ for the moment, we can distinguish three sets of primary intersections: $L_1$, the intersections with the left vertical in $P_1$, $R$, the intersections with the right vertical in $P_1$ and $L_2= L_1\sqcup R'$ the set of intersections with the left vertical in $P_2$. Note that $R'$ is in bijection with $R$ and $R$ also labels the intersections with the right vertical in $P_2$. 

Generators $(x_1,\dots, x_j)\in (w_1\cap \alpha)\times \cdots \times (w_h\cap \alpha)\times (u_1\cap \alpha)\times \cdots \times (u_{j-h}\cap \alpha)$ of the colored HOMFLY complex have an equivalent description by $(x_1',\dots, x_j')$ with $x_1',\dots, x_h'$ being left primary intersections and $x_{h+1}',\dots, x_j'$ right primary intersections, where the order remembers the order in which these primary intersections are placed on parallels of the two verticals.

\begin{defi} (Top twist case)
Let $\overline{x}=(x_1',\dots, x_j')$ be a generator of the colored HOMFLY complex of $P_2$, which is obtained from $P_1$ by a top twist. Partition the left primary intersections $x_1',\dots, x_h'$ into two subsequences according to whether they belong to $L_1$ or $R'$. Next reverse the subsequence in $L_1$ to get $l$, concatenate it with the sequence $r_1$ in $R$ corresponding to the sequence in $R'$ and finally append the sequence $r_2= x_{h+1}',\dots, x_j'$. The sequence $l\cdot r_1\cdot r_2$ represents a generator for the complex of $P_1$ and we call it the \emph{parent} of its \emph{child} $\overline{x}$. The definition of the relation between child and parent is analogous in the case of a right twist.
\end{defi}

One can think of the top twisting process as bending over the left verticals towards the right and allowing them to steal verticals and intersection points from the right verticals. Similarly, in a right twist, the right verticals bend over to the left and steal verticals and intersection points from the left verticals. The twist rules describe how a generator --- the parent --- in $P_1$ gives rise to several generators --- its children --- in $P_2$.\\

The relative gradings of generators in $P_2$  are influenced by three factors in the geometric picture, which we will treat in this order:
\begin{enumerate}
\item How the new verticals that are stolen from the other side get sorted in between the old verticals. This corresponds to the quantum binomial coefficients in the twist rules.
\item How many verticals are stolen. This corresponds to the coefficients depending on $h-k$ in the twist rules.
\item The weight of the parent in $P_1$. This corresponds to coefficients depending on $k$ in the twist rules.
\end{enumerate}

\begin{lem}
\label{sortlem} The generators corresponding to the possible ways of sorting $h-k$ stolen verticals (with intersections) in between $k$ existing verticals have relative $q$-gradings as described by the quantum binomial coefficient ${h \brack k}$. In the picture with bent verticals, the lowest $q$-grading configuration has all intersection points as far right as possible and the highest $q$-grading configuration has all intersection points as far left as possible.
\end{lem}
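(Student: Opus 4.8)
\textbf{Proof proposal for Lemma \ref{sortlem}.}

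The plan is to reduce the statement to a purely combinatorial fact about the action of the categorified quantum group $\dot{U}(\mathfrak{sl}_s)$, together with a careful translation of how ``sorting stolen verticals'' is encoded algebraically. First I would recall from the previous subsection that, in the algebraic picture, a generator of weight $k$ corresponds to a 1-morphism of the form $E^{(a_1)}E^{(a_2)}\cdots$ (or with tags, via Lemma \ref{TagSlideLem}), and that the parent-child relation produced by a top twist is exactly the expansion carried out in the proof of the twist rules, where one uses relation xii, i.e. $E^{(a)}E^{(b)}\cong {a+b\brack a}E^{(a+b)}$, to combine a block of $k$ ``old'' $E$'s with a block of $h-k$ ``stolen'' $E$'s into a single $E^{(h)}$. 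The quantum binomial coefficient ${h\brack k}$ that appears there is precisely the sum over the $\binom{h}{k}$ ways of interleaving the two blocks, each way carrying the monomial $q^{\,2(\text{number of inversions})-k(h-k)}$ (after the normalization to ${h\brack k}^{\pm}$), so the combinatorial content is the standard $q$-binomial identity ${h\brack k}=\sum q^{2\cdot\mathrm{inv}-k(h-k)}$ over shuffles. Thus the first key step is: identify each summand in the expansion of ${h\brack k}$ with one interleaving pattern of the stolen verticals among the old verticals.

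Second, I would make the geometric side match this. Each interleaving pattern of verticals corresponds to a choice of which parallel copies of the (bent) left vertical carry the ``stolen'' intersection points versus the ``old'' ones; equivalently it is a choice of a $k$-element subset of the $h$ parallels, and the associated generator is the $j$-tuple whose entries on those parallels are the stolen intersection points. The claim then becomes: moving one stolen vertical past one old vertical (a single adjacent transposition in the interleaving word) shifts the $q$-grading by $q^{\pm 2}$, with the sign fixed by the direction of the move. This is exactly an instance of rule (1) of Theorem \ref{mainthm} applied to two generators differing in one coordinate, where the connecting domain $D$ is a single simple disc whose interior contains exactly one of the other intersection points — giving the non-additive contribution $q^{\pm 2}$ (boundary crossing) — and contains none of the distinguished points $\{X^+,X^-,Y\}$, so the additive part $Q$ is trivial. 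Checking that such a domain $D$ is indeed a simple disc containing precisely one relevant intersection point, and no distinguished point, is a local picture argument about two adjacent parallels of the bent vertical and the arc $\alpha$ passing between them; this is the step I expect to require the most care, since one must verify that no spurious intersections (with $\alpha$, with other parallels, or with the distinguished points) enter the enclosed region, and that the orientation of the disc is the one producing $q^{+2}$ for a rightward move.

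Third, with the single-transposition step established, I would induct on the number of inversions of the interleaving word. The reference configuration — all stolen intersection points as far right as possible — corresponds to the word with the maximal (or minimal, depending on convention) number of inversions, hence to the monomial $q^{-k(h-k)}$ after normalization, i.e. the lowest term of ${h\brack k}^+$; each adjacent transposition toward ``all stolen points as far left as possible'' multiplies by $q^{2}$, and the set of configurations reached this way is in bijection with the monomials of ${h\brack k}$ with correct multiplicities by the shuffle description from the first step. Finally, I would note that well-definedness of the resulting grading (independence of the sequence of transpositions used to reach a given configuration) is not an extra burden here: it is the content of Lemma \ref{welldefgrad} already proved, applied to these weight-$h$ generators. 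Assembling these three steps — algebraic identification of summands with shuffles, geometric computation of the single-transposition shift via rule (1), and the inductive bijection with monomials of ${h\brack k}$ — proves the lemma.
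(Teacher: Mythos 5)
Your high-level plan coincides with the paper's: reduce to a single adjacent transposition of a stolen vertical past an old one, show that this shifts the $q$-grading by $q^{2}$, and then induct on inversions using Lemma \ref{welldefgrad} for well-definedness. However, the step you yourself flag as ``requiring the most care'' is exactly where all the work in the paper happens, and your description of it contains two imprecisions that would need to be repaired.

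First, the adjacent transposition $(\dots,A,B,C,D,\dots)\to(\dots,A,C,B,D,\dots)$ changes \emph{two} coordinates of the generator, not one: the intersection point on the $k$th parallel and the intersection point on the $(k+1)$th parallel both change. So you cannot invoke rule (1) of Theorem \ref{mainthm} as a single application to ``two generators differing in one coordinate''; you must apply it twice, with domains $D_1$ and $D_2$ whose interaction (disjoint, or overlapping with matching or opposite orientations, along a boundary or in an interior) has to be controlled exactly as in the proof of Lemma \ref{welldefgrad}. Second, your phrase ``whose interior contains exactly one of the other intersection points — giving the non-additive contribution $q^{\pm 2}$ (boundary crossing)'' is internally inconsistent: by Theorem \ref{mainthm}, an intersection point in $D_s^\circ$ contributes $q^{4}$, while a boundary contribution gives $q^{2}$; you want the latter, so the assertion about the interior is wrong. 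Finally, the claim that the connecting region is ``a single simple disc'' is an oversimplification: the paper's proof distinguishes three cases (two of them with two sub-cases each) according to which external domain $E$ or $F$ outside the local picture at the bend determines the relative gradings, and checks each configuration pictorially. Your step 1 (identifying summands of ${h\brack k}$ with shuffles and inversion statistics) is correct and a good motivation, but it is not needed once the $q^2$-per-transposition fact is established geometrically, which is how the paper proceeds.
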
 
\begin{proof}
We first look at the relevant region where the weighted verticals are bent over. The figure below shows a typical situation. If we assume that it is the result of a top twist, then we see a generator represented by a tuple $(\dots,A,B,C,D,\dots)$ on the left hand side. 

To prove the statement of the lemma, it suffices to show that if an intersection $B$ on the left of the bend and an intersection $C$ on the right of the bend, lying on adjacent verticals, swap verticals such that $C$ moves left, this causes a grading shift of $q^2$. The result of such a swap is shown on the right hand side of the figure. If it is the result of a top twist, the generator is represented by the tuple $(\dots, A,C,B,D,\dots)$. 

\[\vcenter{\hbox{\includegraphics[height=2cm, angle=0]{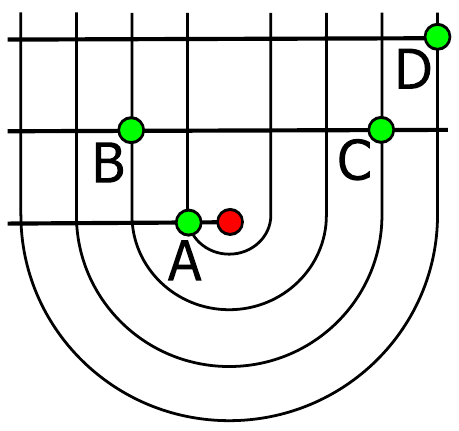}}}  \to \vcenter{\hbox{\includegraphics[height=2cm, angle=0]{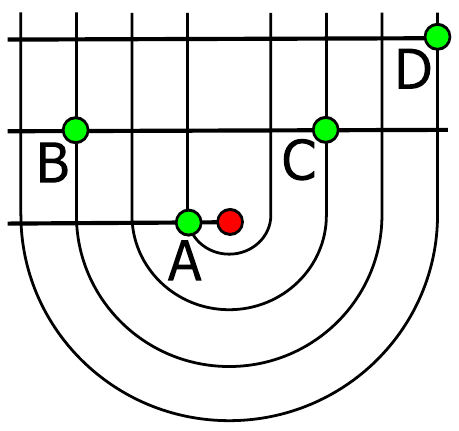}}} \]

We distinguish three cases, two of which have two sub-cases each, depending on which domain ($E$ or $F$) outside the local picture determines relative gradings of generators in the local picture:
\begin{enumerate}
\item \[
\vcenter{\hbox{\includegraphics[height=1.6cm, angle=0]{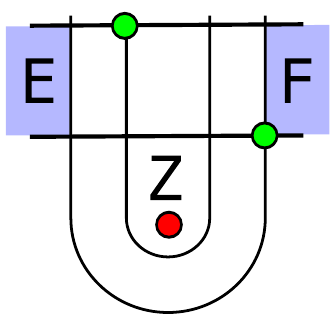}}} = \begin{cases}
q^{-2} E^{-1}\\ 
F \\
\end{cases} \vcenter{\hbox{\includegraphics[height=1.6cm, angle=0]{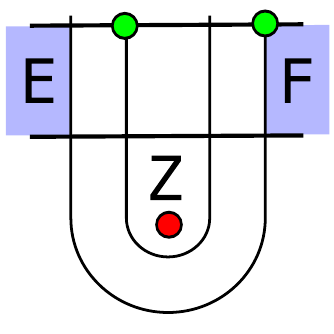}}} =\begin{cases}
Z E^{-1}\\ 
q^2 Z F \\
\end{cases} \vcenter{\hbox{\includegraphics[height=1.6cm, angle=0]{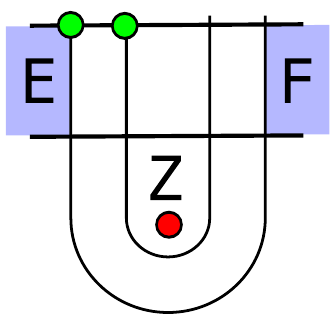}}} =\begin{cases}
E^{-1}\\ 
q^2 F \\
\end{cases} 
 \vcenter{\hbox{\includegraphics[height=1.6cm, angle=0]{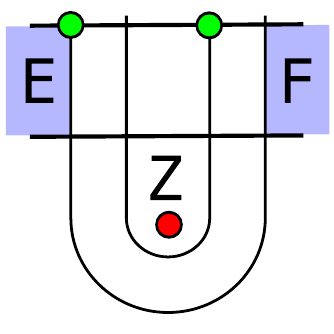}}} =q^2 \vcenter{\hbox{\includegraphics[height=1.6cm, angle=0]{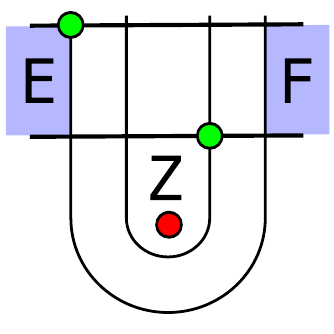}}}
\]
\item \[
\vcenter{\hbox{\includegraphics[height=1.6cm, angle=0]{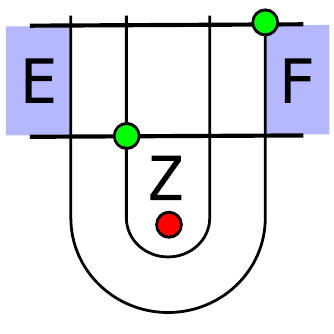}}} = \begin{cases}
q^{2} E\\ 
F^{-1}\\
\end{cases} \vcenter{\hbox{\includegraphics[height=1.6cm, angle=0]{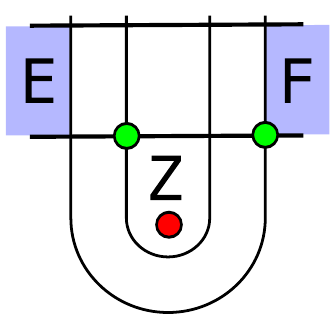}}} =\begin{cases}
q^4 Z E\\ 
q^2 Z F^{-1}\\
\end{cases} \vcenter{\hbox{\includegraphics[height=1.6cm, angle=0]{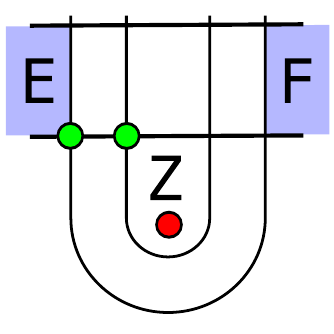}}} =\begin{cases}
q^4 E\\ 
q^2 F^{-1}\\
\end{cases} 
 \vcenter{\hbox{\includegraphics[height=1.6cm, angle=0]{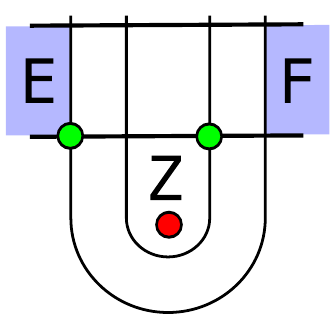}}} =q^2 \vcenter{\hbox{\includegraphics[height=1.6cm, angle=0]{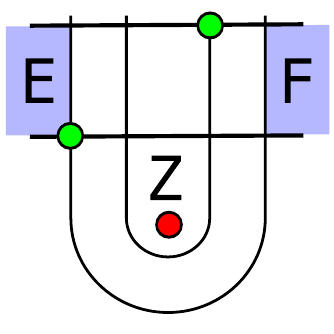}}}
\]
\item \[\vcenter{\hbox{\includegraphics[height=1.6cm, angle=0]{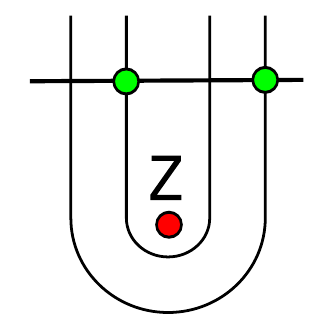}}} = q^2 Z \vcenter{\hbox{\includegraphics[height=1.6cm, angle=0]{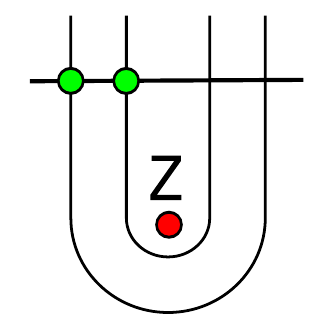}}} = q^2 \vcenter{\hbox{\includegraphics[height=1.6cm, angle=0]{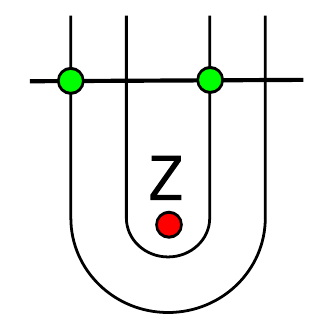}}}\]
\[\]
\end{enumerate} 
\end{proof}

\begin{lem} The simple slides in the geometric picture induce the grading shifts required by the twist rules.
\end{lem}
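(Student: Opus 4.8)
The plan is to reduce the claim to a finite explicit check against the twist rules of section~\ref{twistrules}. A simple slide moves the innermost vertical from the weighted to the unweighted side, so it lowers the weight by one; at the twist step $P_1\to P_2$ currently under consideration the innermost vertical is one of the verticals freshly stolen during that step (when no vertical was stolen it is an old one, and the relevant slide is then already controlled by Theorem~\ref{mainthm} applied to $P_1$). It therefore suffices to verify, for each of the six configurations and for a top twist, that the shift attached by rule~(2) of Theorem~\ref{mainthm} to such a slide equals the ratio $c_{h-1}/c_{h}$ of the base coefficients of the weight-$h$ and weight-$(h-1)$ summands of the corresponding twist rule, where the base coefficient is the coefficient of the lowest-degree term of ${h\brack k}^{+}$ (the internal spread of that polynomial being produced separately by rule~(1) and Lemma~\ref{sortlem}). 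The right-twist rules then follow from the reflection symmetry of section~\ref{twistrules}.

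First I would pin down, in each configuration, which distinguished point the innermost freshly-stolen vertical sweeps across. Using the inductive bending description of the diagrams together with the dictionary between permutations of the labels $X^{+},X^{-},Y$ and web types recorded in Figure~\ref{configs}, one reads off that this point is $Y$ in the $UPs$ and $RIs$ cases, $X^{-}$ in the $UP$ and $RI$ cases, and $X^{+}$ in the $OP$ and $OPs$ cases. Proposition~\ref{Mirror}, extended to complexes of knotted webs, swaps $UP\leftrightarrow RI$, $UPs\leftrightarrow RIs$, $OP\leftrightarrow OPs$, inverts all four gradings, and turns a right twist into a top twist while reversing the orientation of a slide; this identifies the two members of each pair and the two twist directions, so that only three local computations remain.

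Those three shifts I would compute by a local picture argument of exactly the type used in the proof of Lemma~\ref{sortlem}: after translating into Cautis' notation, pushing the innermost vertical across a distinguished point amounts to absorbing an $E$ (or $F$) past the idempotent sitting at that point and then applying an adjunction (relations~ii and~ix) together with relation~xii, which is precisely the mechanism behind the Chuang--Rouquier crossing complexes. Crossing $Y$ (a $j$-colored strand, away from the color jump and from the $i$-colored strand) contributes $t\,q^{-1}$; crossing $X^{-}$ produces the additional factor $s^{-1}=q^{-(i-j)}$ coming from the color difference, hence $t\,q^{-1}s^{-1}$; and crossing $X^{+}$, where the slid rung meets the $i$-colored incoming strand, forces the complementary-label substitution $k\mapsto N-k$ of Lemma~\ref{doubletag}, bringing in $a=q^{N}$ together with the weight- and color-dependent power $q^{2j-1}s$, hence $t\,q^{2j-1}s\,a^{-1}$. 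These are the three shifts asserted in rule~(2), and the $X^{+}$ case, with its simultaneous $a$-, $s$- and $q^{2j-1}$-dependence, is where the bookkeeping is genuinely delicate --- this is the main obstacle.

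It then remains to match the shifts with the twist rules by direct arithmetic, which is routine. For example, $TUP[i,j,k]=\sum_h t^{-h}s^{k}q^{k^{2}+h}{h\brack k}^{+}UPs[i,j,h]$ gives $c_{h-1}/c_{h}=t\,q^{-1}$, matching the $UPs$ slide; $TUPs$ gives $t\,q^{-1}s^{-1}$, matching $UP$; $TOPs$ and $TOP$ give $t\,q^{-1}s^{-1}$ and $t\,q^{-1}$, matching $RI$ and $RIs$; and $TRI$, $TRIs$ both give $c_{h-1}/c_{h}=t\,a^{-1}s\,q^{2j-1}$, matching $OPs$ and $OP$. In each case the ratio is independent of the parent weight $k$, which is exactly the compatibility needed to iterate: the $h-k$ simple slides taking a weight-$k$ parent to a weight-$h$ child reproduce the $h-k$-dependence of the twist coefficients, Lemma~\ref{sortlem} reproduces the ${h\brack k}^{+}$-dependence, and the residual $k$-dependent prefactors $t^{-k}s^{k}q^{k^{2}+k}$ and their analogues are global shifts which, as in section~\ref{twistrules}, we do not track. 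Together with Lemmas~\ref{welldefgrad} and~\ref{sortlem}, this shows that the geometric algorithm observes the twist rules and completes the proof of Theorem~\ref{mainthm}.
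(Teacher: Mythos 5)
Your core argument --- after the normalization by ${h \brack k}^{+}$ (resp.\ ${j-h \brack k-h}^{-}$), compare the ratio $c_{h-1}/c_{h}$ of consecutive coefficients in each twist rule against the shift attached to a simple slide by rule~(2) of Theorem~\ref{mainthm} --- is exactly the paper's proof, and your arithmetic in the final paragraph is correct in every case. The one misdirection is that you treat re-deriving the three shift values $t/q$, $t/(qs)$, $tq^{2j-1}s/a$ from Cautis' relations via a ``local picture argument'' as the main obstacle; those values are simply \emph{stipulated} in rule~(2) as part of the definition of the geometric grading, so the lemma only requires the consistency check you already carry out at the end, and the paper gives no such independent derivation. (Relatedly, your ``only three local computations remain'' slightly overstates what the reflection of Proposition~\ref{Mirror} buys: it pairs each right-twist rule with a top-twist rule, but you must still run through all six top-twist cases, as you in fact do.)
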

\begin{proof}
First note that it is sufficient to check this for simple slides between lowest $q$-grading configurations in the case of top twists and between highest $q$-grading configurations in the case of right twists. In writing the twist rules, we have renormalized the quantum binomial coefficient such that this relevant grading is $0$. Given this, it is straightforward to check that simple slides induce the same grading shifts as described by the twist rules.\\

For example, in 
\[TUP[i,j,k]\cong \sum_{h=k}^j t^{-h} s^{k} q^{k^2+h} {h \brack k}^+ UPs[i,j,h] \]
we see the dependence on $h$ is in a shift of $\frac{q}{t}$ when passing from a weight $h-1$ generator to the weight $h$ generator which is related by a simple slide, as expected for generators of type $UPs$.
\end{proof}

So far we have shown that the geometric algorithm accurately reproduces the relative gradings between the children of each parent. It remains to understand the relative gradings between children of different parents. 

\begin{defi} Each parent has a distinguished child that we call its \emph{clone}. If the parent is represented by the sequences $l,r$ of left and right primary intersections, then the clone is the child that is represented the reverse of $l$ concatenated with $r$. It is the child that arises by stealing zero verticals.  
\end{defi}

\begin{lem}
Clones of equal weight have the same relative gradings as their parents.
\end{lem}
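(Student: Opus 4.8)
The plan is to reduce, using well-definedness of the grading (Lemma~\ref{welldefgrad}, which applies equally to the picture of $P_2$), to the case of two parents differing in a single coordinate, and then to transport the domain that computes their relative grading across the twist, exploiting that a clone steals no verticals.

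So suppose $P_2$ arises from $P_1$ by a top twist (the right--twist case is symmetric) and let $\overline{x},\overline{y}$ be two parents of weight $k$. Relative gradings are computed by iterating rule (1) of Theorem~\ref{mainthm}, and the result is independent of the chosen sequence of single--coordinate changes (Lemma~\ref{welldefgrad}); hence it suffices to treat the case in which $\overline{x}$ and $\overline{y}$ differ in exactly one primary intersection. First I would note that this change of $\overline{x}$ induces a change in exactly one coordinate of $\mathrm{clone}(\overline{x})$: the reversal of the left subsequence in the definition of the clone only relabels which weighted vertical of $P_2$ carries which intersection, so altering the intersection on the $m$-th weighted vertical of $P_1$ corresponds to altering the one on the $(k+1-m)$-th weighted vertical of $P_2$ (and, even more directly, a change in a right primary intersection is unaffected, since there $P_2$ agrees with $P_1$).

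Next I would invoke the inductive description of the diagrams as iterated bendings of a vertical followed by flipping the half--diagram back: this modifies $P_1$ only inside the bending region, and on the flipped half it acts by an explicit homeomorphism $\phi$ taking $\alpha_{P_1}$, the weighted and unweighted verticals, and the distinguished points of $P_1$ onto those of $P_2$. Since no verticals are stolen, $\phi$ identifies $\overline{x}$ (resp.\ $\overline{y}$) with its clone --- the reversal of $l$ in the definition of the clone is precisely the reordering of verticals induced by $\phi$ --- and correspondingly identifies the domain $D=\sum_i n_i D_i$ realizing the single--coordinate change in $P_1$ with a domain realizing the corresponding change between the clones in $P_2$, carrying each simple disc $D_i$ to a simple disc. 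Because $\phi$ preserves the incidence (interior / boundary / exterior) of $D_i$ with $\{X^+,X^-,Y\}$ and with the remaining coordinates, and because the contribution formulas in rule (1) are identical in all six configuration types of Figure~\ref{configs} (for instance the $X^+$-contribution is always $\frac{a^2}{t q^{4j-2} s^2}$), the contribution of $\phi(D_i)$ equals that of $D_i$. The only point still to settle is that the $\mathbb{Z}$-multiplicities of the simple discs are preserved; this comes down to checking that the reversal of planar orientation caused by the flip is exactly cancelled by the reversal of the direction in which the flipped arc of $\alpha$ and the flipped verticals are read off, which is the same reversal encoded in the definition of the clone. Granting this, $\mathrm{clone}(\overline{x})$ and $\mathrm{clone}(\overline{y})$ have the same relative grading as $\overline{x}$ and $\overline{y}$, and iterating over single--coordinate changes completes the proof.

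The step I expect to be the main obstacle is exactly this orientation bookkeeping --- verifying that the reflection implicit in the flip is compensated by the reorderings built into the definition of the clone, so that neither the additive $Q$-grading nor the non--additive $q-Q$ part is inadvertently inverted --- together with pinning down $\phi$ precisely inside the bending region. The clean way to settle both is to check the single--simple--disc case inside the bending region in each of the six configuration types of Figure~\ref{configs}; by the transitivity already invoked, the general statement then follows.
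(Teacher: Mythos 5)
Your proof takes essentially the same approach as the paper's, which is a three-sentence argument asserting that the domains in $P_1$ computing relative gradings between parents "survive the twisting to $P_2$" and hence compute the same relative gradings between their clones. You make precise what "survive" means — by reducing via Lemma~\ref{welldefgrad} to single-coordinate changes, introducing the homeomorphism $\phi$ induced by the bend-and-flip construction, and checking that $\phi$ matches the coordinate reversal in the definition of a clone — and you correctly flag the orientation/multiplicity bookkeeping as the one point requiring a finite check in the configurations of Figure~\ref{configs}; the paper elides all of these details. This is the right reading of "survive," so the proposal is essentially an expanded version of the paper's proof rather than a genuinely different route.
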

\begin{proof}
Clones of equal weight have parents of equal weight. Their relative grading is computed by domains in $P_1$ which survive the twisting to $P_2$. The same domains thus compute the same relative gradings between clones.
\end{proof}

Combining the statement of the lemma with previous results we see that the geometric algorithm correctly computes the relative gradings between all children of parents of a certain weight. The last step in the proof of the main theorem is, therefore, to find the geometrically determined relative gradings of a set of children of parents of all possible weights and compare them with the twist rules. 

\begin{lem}

Let $p_0$  be an arbitrary weight $0$ parent and $c_0$ its clone. Write $p_k$ for the weight $k$ parent that is related to $p_0$ by simple slides and $c_k$ for its clone. Then the geometric algorithm correctly computes the relative gradings of the generators $c_0,\dots, c_j$.
\end{lem}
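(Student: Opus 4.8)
The plan is to complete the induction on the number of twists that runs through the proof of Theorem \ref{mainthm}: the base case is the trivial tangle, whose complex has a single generator, and for the inductive step I let $P_2$ be obtained from $P_1$ by adjoining one more twist. Using the reflection across the $SW$-$NE$ diagonal together with Proposition \ref{Mirror} (exactly as in the derivation of the right-twist rules), I may assume that the new twist is a top twist; then $p_0,\dots,p_j$ are generators of weights $0,\dots,j$ in the complex of $P_1$ joined by a chain of simple slides, and $c_0,\dots,c_j$ are their clones in the complex of $P_2$.

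First I would write down the relative gradings of $c_0,\dots,c_j$ as they are actually assigned by the twist rules of Section \ref{twistrules}, i.e.\ the ``correct'' answer against which the geometric recipe must be checked. Since the $p_k$ are related by simple slides, the induction hypothesis together with the lemma that simple slides reproduce the twist-rule shifts determines $g(p_k)/g(p_{k-1})$ inside the complex of $P_1$: it is the weight-increasing simple-slide shift attached to the type of $P_1$ by rule (2). Pushing the complex of $P_1$ through the top twist, the clone $c_k$ is by definition the summand of the twist of $p_k$ in which no verticals are stolen, namely the $h=k$ term of the relevant twist rule; there the quantum binomial ${k\brack k}$ is $1$, so the grading shift of $c_k$ relative to $p_k$ is simply the monomial in $a,q,s,t$ displayed in that term. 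Chaining these facts, $g(c_k)/g(c_0)$ in the complex of $P_2$ is the product of the $k$ simple-slide shifts coming from $P_1$ with the ratio of these clone monomials for weights $k$ and $0$.

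Next I would compute what the geometric recipe predicts for $c_0,\dots,c_j$ directly in the $P_2$ picture. Here $c_k$ differs from $c_0$ by moving $k$ coordinates from the unweighted to the weighted side, and by Lemma \ref{welldefgrad} the resulting grading shift is independent of the order of moves; I would organize it as $k$ simple slides in $P_2$ --- each across the distinguished point prescribed by the type of $P_2$ --- followed by the rule (1) moves needed to bring the newly weighted coordinates into the configuration of $c_k$, and read off the total shift from rules (1) and (2).

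The final and most substantial step is to verify that the monomial produced by the twist rules equals the one produced by the geometric recipe. Since the type of $P_2$ is the twist-image of the type of $P_1$ recorded in Figure \ref{configs}, and since the twist rules were normalized precisely so that the $q^0$-term of each quantum binomial corresponds to the extremal configuration of stolen verticals, the identity reduces to a finite, case-by-case comparison over the six types. I expect the bookkeeping to be the main obstacle: one must see that the $k$-dependent prefactors occurring in the twist rules --- the powers $q^{k^2}$, $s^{\pm k}$ and $a^{\pm k}$ --- are exactly matched on the geometric side, the $q$-powers arising from the rule (1) sorting moves and the $s$- and $a$-powers from the simple slides across the points $X^-$ and $X^+$. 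Carrying out this accounting in each of the six cases establishes the lemma and, together with the earlier lemmata and Lemma \ref{welldefgrad}, completes the proof of Theorem \ref{mainthm}.
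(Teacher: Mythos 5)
Your plan matches the paper's proof in essence: fix absolute gradings on the complexes for $P_1$ and $P_2$ by requiring that $p_0\to c_0$ carries the twist-rule shift, then verify for each weight $k$ that the geometric recipe reproduces the twist-rule shift from $p_k$ to its clone $c_k$, chaining through the known simple-slide shifts among the $p_k$'s. The paper packages this check as a commutative square and does induction on $k$ in each of the twelve cases (working out TUP and ROP as Figures \ref{figTUP} and \ref{figROP} and omitting the other ten ``very similar'' cases), whereas you telescope the chain to compare $c_k$ directly with $c_0$; these two bookkeepings are equivalent and at the same level of detail.

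The one genuine departure is your proposal to halve the case analysis by using the $SW$--$NE$ reflection together with Proposition \ref{Mirror} to reduce right twists to top twists. The paper does not do this --- it treats the twelve rules separately, giving one example of each flavour. Your reduction is plausible, since the same reflection is already used in the paper to derive the right-twist rules from the top-twist rules, and Proposition \ref{Mirror} says all gradings invert. But it incurs an extra obligation you do not discharge: you must verify that the \emph{geometric} recipe itself is reflection-equivariant --- in particular that the induction hypothesis ``the algorithm is correct for $P_1$'' transfers to the reflected diagram $P_1^r$, with the verticals, special points $\{X^+,X^-,Y\}$, and simple discs all transforming compatibly. That check is not hard but is an additional piece of reasoning; the paper avoids it by simply running the six right-twist cases directly, which costs essentially nothing more.
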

\begin{proof}
We choose temporary absolute gradings on the complexes associated to $P_1$ and $P_2$ and align them by requiring that passing from $p_0$ to its clone $c_0$ shifts grading by the amount described by the twist rules. We then check for each of the twelve cases (corresponding to the twelve rules) inductively on $k$ that the known shift from $p_{k-1}$ to $c_{k-1}$ together with rules (1) and (2) from the statement of Theorem \ref{mainthm} correctly compute the shift from $p_k$ to $c_k$.\\

We give two examples, one for a top twist and one for a right twist, and omit ten very similar cases.

\textbf{Case TUP:}
\[TUP[i,j,k]\cong \sum_{h=k}^j t^{-h} s^{k} q^{k^2+h} {h \brack k}^+ UPs[i,j,h] \]

In this case the parent is $p_k=UP[i,j,k]$ and the clone is the summand for $h=k$ on the right hand side of the twist rule above. In Figure \ref{figTUP} we have drawn the parents $p_{k-1}$ and $p_k$ in the first row, their clones $c_{k-1}$ and $c_k$ on the left and right in the second row and intermediate diagrams, computing the grading shift from $c_{k-1}$ to $c_k$, in the middle. Green dots on thick verticals represent tuples of intersections on tuples of verticals with size written above. The lower left shift comes from sliding the thin vertical to the left. The central lower shift comes from re-ordering the bent verticals. The lower right shift comes from the small disc containing $X^-$. The left vertical and the upper central grading shifts are known and the right vertical grading shift is determined by the commutativity of the diagram. It agrees with the shift described by the rule for $TUP$.

\begin{figure}[h]
\centerline{
\begindc{\commdiag}[20]
\obj(0,40)[1a]{$\vcenter{\hbox{\includegraphics[height=2cm, angle=0]{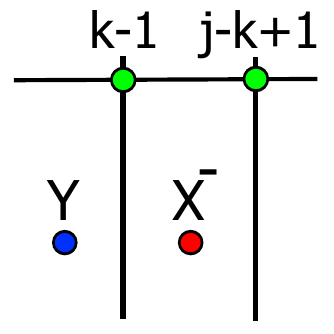}}}$}
\obj(135,40)[1b]{$\vcenter{\hbox{\includegraphics[height=2cm, angle=0]{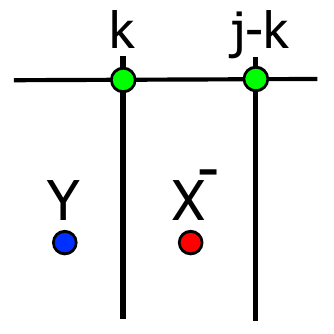}}}$}
\obj(0,0)[2a]{$\vcenter{\hbox{\includegraphics[height=2cm, angle=0]{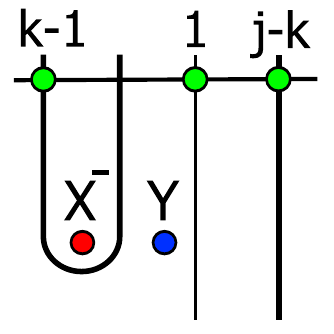}}}$}
\obj(45,0)[2b]{$\vcenter{\hbox{\includegraphics[height=2cm, angle=0]{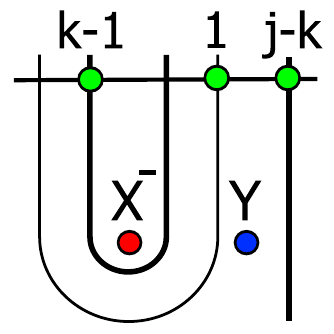}}}$}
\obj(90,0)[2c]{$\vcenter{\hbox{\includegraphics[height=2cm, angle=0]{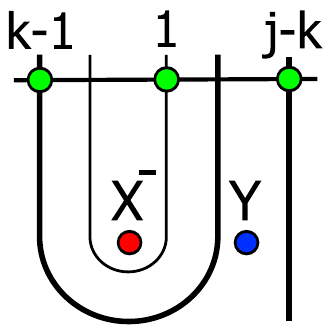}}}$}
\obj(135,00)[2d]{$\vcenter{\hbox{\includegraphics[height=2cm, angle=0]{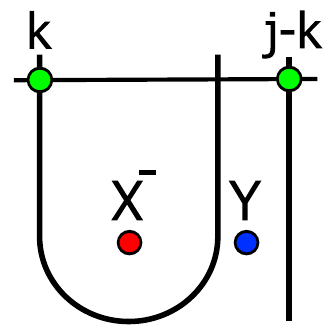}}}$}
\mor{1a}{1b}{$\frac{q s}{t}$}
\mor{1a}{2a}{$\frac{q^{k(k-1)} s^{k-1}}{t^{(k-1)}}$}
\mor{2a}{2b}{$\frac{q}{t}$}
\mor{2b}{2c}{$q^{2(k-1)}$}
\mor{2c}{2d}{$\frac{q^2 s^2}{t}$}
\mor{1b}{2d}{$\frac{q^{(k+1)k} s^k}{t^{k}}$}
\enddc
}
 \caption{}
 \label{figTUP}
\end{figure}

~\\
\textbf{Case ROP:}
For this situation see Figure \ref{figROP}. The known shift is on the right vertical arrow, the new shift is on the left vertical arrow and it is correctly computed by commutativity of the diagram.
 \begin{figure}[h]
\centerline{
\begindc{\commdiag}[20]
\obj(0,40)[1a]{$\vcenter{\hbox{\includegraphics[height=2cm, angle=0]{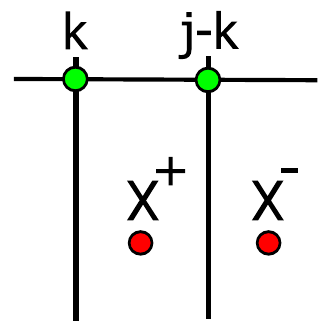}}}$}
\obj(135,40)[1b]{$\vcenter{\hbox{\includegraphics[height=2cm, angle=0]{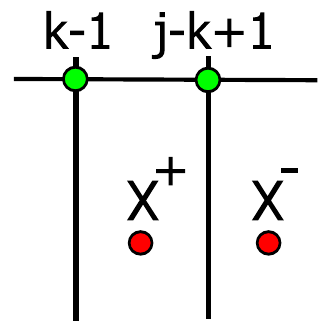}}}$}
\obj(0,0)[2a]{$\vcenter{\hbox{\includegraphics[height=2cm, angle=0]{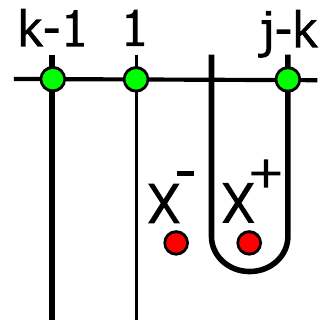}}}$}
\obj(45,0)[2b]{$\vcenter{\hbox{\includegraphics[height=2cm, angle=0]{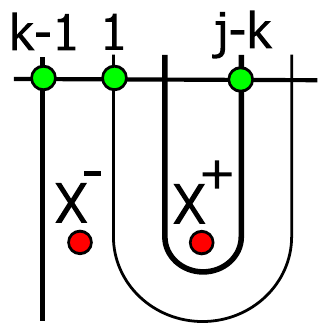}}}$}
\obj(90,0)[2c]{$\vcenter{\hbox{\includegraphics[height=2cm, angle=0]{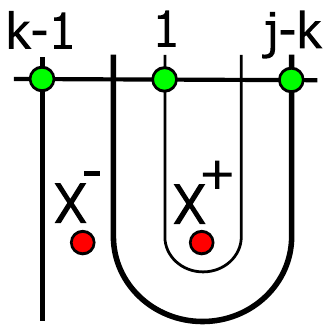}}}$}
\obj(135,0)[2d]{$\vcenter{\hbox{\includegraphics[height=2cm, angle=0]{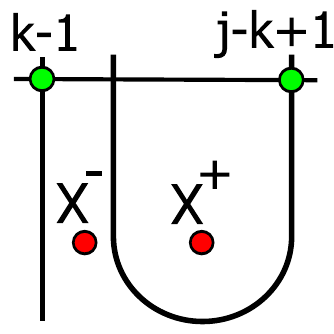}}}$}
\mor{1a}{1b}{$\frac{t q^{2j-1} s}{a}$}
\mor{1a}{2a}{$\frac{a^k q^{-k^2+k}}{ t^{k}}$}
\mor{2a}{2b}{$\frac{t}{q s}$}
\mor{2b}{2c}{$q^{2(k-j)}$}
\mor{2c}{2d}{$\frac{t q^{4j-2} s^2}{a^2}$}
\mor{1b}{2d}{$\frac{a^{k-1} q^{-(k-1)^2+k-1}}{t^{k-1}}$}
\enddc
}
 \caption{}
 \label{figROP}
\end{figure}
\end{proof}

\subsection{Differentials}
We have mentioned in section \ref{categ} 
that the differentials in the crossing complex are essentially uniquely determined as composition of an inclusion and the adjunction between $E$s and $F$s. The same holds for all complexes in the twist rules of section \ref{twistrules} by Lemma 4.3 of \cite{Cau2}. 
The colored HOMFLY complexes are simplifications of tensor products of crossing complexes. In our approach, these simplifications are computed iteratively by adding a crossing at a time via the twist rules. \\

It follows from the proof of Theorem \ref{mainthm} that the component of the differential coming from the last crossing added corresponds to sliding the innermost vertical from left to right, as described in the statement of the theorem. However, this has to be understood as differential that ties together the groups of generators that differ only by re-ordering verticals as in Lemma \ref{sortlem}. The actual differential between generators then can be computed as a composition of an inclusion, the differential on groups of generators and a projection.\\

The differentials coming from previous crossings can be identified with oppositely oriented simple discs in the picture, with boundary on $\alpha$ and verticals and containing one of the points $X^+$, $X^-$ and $Y$. As before, these differentials map between groups of generators that are related by re-ordering verticals. We have only managed to compute these differential explicitly in simple cases, but we expect that there are essentially only three types of morphisms involved, which depend on the special point  $X^+$, $X^-$ or $Y$ that is contained in the corresponding simple disc.

\subsection{Examples}
In this section we give two example computations. First, demonstrate the geometric algorithm we compute the chain groups in the colored HOMFLY complex of the rational tangle $T(3,1)$ with respect to colors $(\Lambda^i, \Lambda^2)$ with $i\geq 2$.\\
Second, we compute the Poincar\'e polynomial of $\mathfrak{sl}_N$ link homology of the $(\Lambda^i, \Lambda^j)$-colored Hopf link with $i>j$, reduced with respect to $\Lambda^i$.\\

\begin{exa}
The following figure shows the intersection points used for generators of weight $0$, $1$ and $2$ in the colored HOMFLY complex of the rational tangle $T(3,1)$.
\[\vcenter{\hbox{\includegraphics[height=3cm, angle=0]{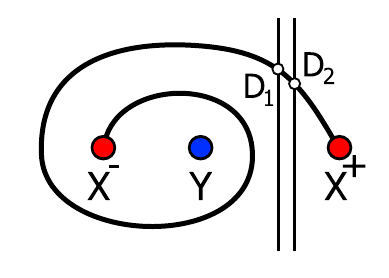}}}\quad \vcenter{\hbox{\includegraphics[height=3cm, angle=0]{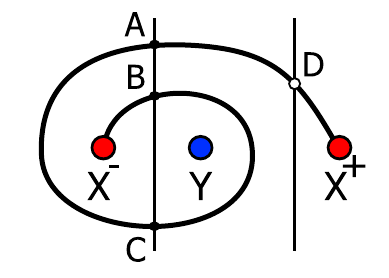}}} \quad \vcenter{\hbox{\includegraphics[height=3cm, angle=0]{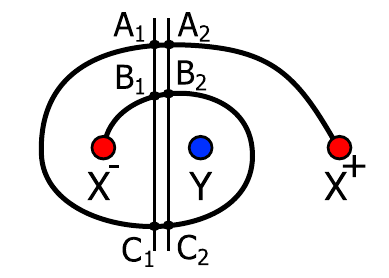}}}\]

The generators of the colored HOMFLY complex of this tangle are pairs of intersection points, one taken from each vertical. Here we have $13$ generators, which are shown in Figure \ref{figTref} together with the expected differentials between them. Vertical and horizontal arrows indicate differentials coming from the first and second crossing respectively, which correspond to the oppositely oriented simple discs containing the special points $Y$ and $X^-$. The diagonal arrows represent the differential coming from the last crossing; in the geometric picture this corresponds to sliding a left vertical to the right across $Y$. \\

 \begin{figure}[h]
\centerline{
\begindc{\commdiag}[15]
\obj(240,90)[DD]{$D_1D_2$}
\obj(160,50)[AD]{$AD$}
\obj(120,50)[CD]{$CD$}
\obj(120,90)[BD]{$BD$}
\obj(80,10)[AA]{$A_1A_2$}
\obj(40,10)[AC]{\begin{tabular}{ c }
  $A_1C_2$  \\
  $C_1A_2$
\end{tabular}}
\obj(0,10)[CC]{$C_1C_2$}
\obj(40,50)[AB]{\begin{tabular}{ c }
  $A_1B_2$  \\
  $B_1A_2$
\end{tabular}}
\obj(0,50)[BC]{\begin{tabular}{ c }
  $B_1C_2$  \\
  $C_1B_2$
\end{tabular}}
\obj(0,90)[BB]{$B_1B_2$}
\mor{AD}{DD}{$~$}[\atleft,0]
\mor{AA}{AD}{$~$}[\atleft,0]
\mor{AC}{CD}{$~$}[\atleft,0]
\mor{AB}{BD}{$~$}[\atleft,0]
\mor{CD}{AD}{$~$}[\atleft,0]
\mor{BD}{CD}{$~$}[\atleft,0]
\mor{AC}{AA}{$~$}[\atleft,0]
\mor{CC}{AC}{$~$}[\atleft,0]
\mor{BC}{AB}{$~$}[\atleft,0]
\mor{AB}{AC}{$~$}[\atleft,0]
\mor{BC}{CC}{$~$}[\atleft,0]
\mor{BB}{BC}{$~$}[\atleft,0]
\enddc
}
 \caption{}
 \label{figTref}
\end{figure}
In order to find the relative gradings of these generators we apply the rules of Theorem $\ref{mainthm}$. The non-additive part of the $q$-grading comes from re-ordering vertical, as shown in Lemma \ref{sortlem}. For example, there is a shift of $q^2$ from $C_1A_2$ to $A_1C_2$. In the figure we have written the lower $q$-grading configurations $C_1A_2$, $B_1A_2$ and $C_1B_2$ below the corresponding other configurations.\\

 The additive part of the grading can be read off from the type of arrows in the figure. Vertical, horizontal and diagonal arrows induces grading shifts of $\frac{t}{q^2}$, $\frac{t}{q^2s^2}$ and $\frac{t}{q}$ respectively. Hereby one has to be careful about identifying the generators between which the simple disc determines the grading shift without non-additive component. For example, in the lowest row in the figure the left arrow induces a shift of $\frac{t}{q^2s^2}$ between $C_1C_2$ and $A_1C_2$ while the right arrow induces a shift of equal magnitude between   $C_1A_2$ and $A_1A_2$.\\
 
 If we normalize the invariant by requiring $D_1D_2$ to lie in grading $a^0q^0s^0t^0$, then the other generators have gradings as shown in the table below. The non-additive part of the grading is written in bold font.

  \begin{center}
  \renewcommand{\arraystretch}{1.5}
    \begin{tabular}{ r | c | c | c | l  }
    
  ~ & $D$ & $A_2$ & $C_2$ & $B_2$ \\
  \hline
$A_1$ & $\frac{q}{t}$ & $\frac{q^2}{t^2}$ & $\frac{\mathbf{q^2} q^4s^2}{t^3}$ & $\frac{\mathbf{q^2} q^6s^2}{t^4}$ \\

$C_1$ & $\frac{ q^3s^2}{t^2}$ & $\frac{q^4s^2}{t^3}$ & $\frac{\mathbf{q^2} q^6s^4}{t^4}$ & $\frac{\mathbf{q^2} q^8s^4}{t^5}$ \\

$B_1$ & $\frac{ q^5s^2}{t^3}$ & $\frac{ q^6s^2}{t^4}$ & $\frac{\mathbf{q^4} q^8s^4}{t^5}$ & $\frac{\mathbf{q^4} q^{10}s^4}{t^6}$ \\
\end{tabular}
  \end{center}
 
\end{exa}

\begin{exa}  
As second example we compute the Poincar\'e polynomial of $\mathfrak{sl}_N$ link homology of the $(\Lambda^i, \Lambda^j)$-colored Hopf link with $i>j$, reduced with respect to $\Lambda^i$. We first consider the colored HOMFLY complex of the tangle $T(2,1)$ but suppress grading shifts:

\begin{figure}[ht]
\centerline{
\begindc{\commdiag}[10]
\obj(240,50)[O11]{$UP[i,j,0]$}
\obj(160,50)[O21]{$UP[i,j,1]$}
\obj(160,90)[O22]{$UP[i,j,1]$}
\obj(80,130)[O33]{$UP[i,j,2]$}
\obj(80,90)[O32]{${2 \brack 1}UP[i,j,2]$}
\obj(80,50)[O31]{$UP[i,j,2]$}
\obj(0,50)[O41]{$\cdots$}
\obj(0,90)[O42]{$\cdots$}
\obj(0,130)[O43]{$\cdots$}
\obj(320,50)[O10]{$TUPs[i,j,0]$}
\obj(320,90)[O20]{$TUPs[i,j,1]$}
\obj(320,130)[O30]{$TUPs[i,j,2]$}
\obj(320,170)[O40]{$~\cdots~$}
\mor{O20}{O10}{$~$}[\atleft,0]
\mor{O30}{O20}{$~$}[\atleft,0]
\mor{O40}{O30}{$~$}[\atleft,0]
\mor{O21}{O11}{$~$}[\atleft,0]
\mor{O31}{O21}{$~$}[\atleft,0]
\mor{O41}{O31}{$~$}[\atleft,0]
\mor{O32}{O22}{$~$}[\atleft,0]
\mor{O42}{O32}{$~$}[\atleft,0]
\mor{O43}{O33}{$~$}[\atleft,0]
\mor{O22}{O21}{$~$}[\atleft,0]
\mor{O33}{O32}{$~$}[\atleft,0]
\mor{O32}{O31}{$~$}[\atleft,0]
\enddc
}
 \caption{}
 \label{figHopf}
\end{figure}
The $k^{th}$ row from the bottom in the left diagram in Figure \ref{figHopf} is the complex $TUPs[i,j,k]$. One way to compute the $\Lambda^i$-reduced $\mathfrak{sl}_N$ link homology of the colored Hopf link would be to close off the $j$-colored strand in all webs in the colored HOMFLY complex on the left side of Figure \ref{figHopf}, replace each of these webs with the isomorphic direct sum of copies of the $i$-colored strand $UP[i,0,0]$, compute the induced differentials and apply Gaussian elimination to cancel all acyclic summands. However, it is much easier to do this one row at a time. For this we close off the $j$-colored strand in the knotted webs $TUPs[i,j,k]$ in the complex shown in the right column in Figure \ref{figHopf} to get a complex of knotted webs $ClTUPs[i,j,k]$. For each of the knotted webs in this chain complex, the following isomorphisms hold up to grading shifts:\footnote{Here we assume that the categorical tangle invariants extend to invariants of knotted webs. This is known for the matrix factorization construction \cite{Wu}, and is expected to extend to the general setting via the uniqueness results in \cite{Cau2}.}
\comm{
\begin{equation*}
ClTUPs[i,j,k] = \vcenter{\hbox{\includegraphics[height=2.5cm, angle=0]{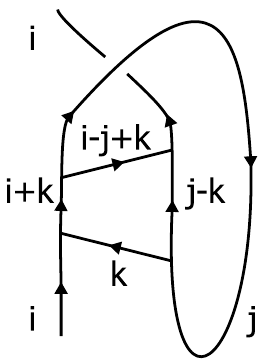}}} \cong \vcenter{\hbox{\includegraphics[height=2.5cm, angle=0]{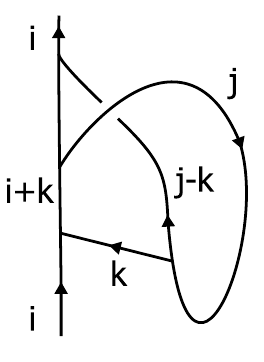}}} \cong\vcenter{\hbox{\includegraphics[height=2.5cm, angle=0]{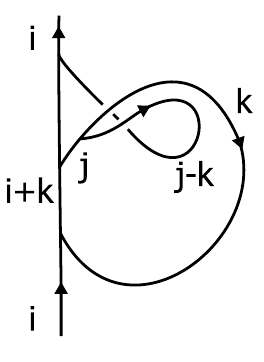}}} \cong\vcenter{\hbox{\includegraphics[height=2.5cm, angle=0]{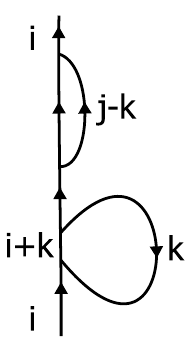}}} \cong {N-i\brack k}{i \brack j-k}\vcenter{\hbox{\includegraphics[height=2.5cm, angle=0]{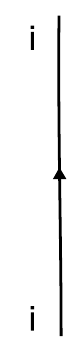}}}   \end{equation*}
}
\begin{align*}
ClTUPs[i,j,k] &= \vcenter{\hbox{\includegraphics[height=2.5cm, angle=0]{hopfcomp1.pdf}}} \cong \vcenter{\hbox{\includegraphics[height=2.5cm, angle=0]{hopfcomp2.pdf}}} \cong\vcenter{\hbox{\includegraphics[height=2.5cm, angle=0]{hopfcomp3.pdf}}}\\
&\cong \vcenter{\hbox{\includegraphics[height=2.5cm, angle=0]{hopfcomp4.pdf}}} \cong {N-i\brack k}{i \brack j-k}\vcenter{\hbox{\includegraphics[height=2.5cm, angle=0]{hopfcomp5.pdf}}} = {N-i\brack k}{i \brack j-k} UP[i,0,0]    
\end{align*}

Thus, each of the complexes $ClTUPs[i,j,k]$ is homotopy equivalent to a complex concentrated in one homological degree only. We next replace each row in the (closed off) colored HOMFLY complex by the corresponding 1-term complex computed above. So far we have disregarded grading shifts, but from the decategorified invariant, it is easy to work out that the remaining terms are concentrated in the highest homological grading in each row, see Figure \ref{figHopf2}, and that all differentials thus must be trivial.

\comm{
\obj(240,0)[O11]{${N-i\brack 0}{i \brack j-0}\vcenter{\hbox{\includegraphics[height=2cm, angle=0]{hopfcomp5.pdf}}}  $}
\obj(160,0)[O21]{$0$}
\obj(160,60)[O22]{${N-i\brack 1}{i \brack j-1}\vcenter{\hbox{\includegraphics[height=2cm, angle=0]{hopfcomp5.pdf}}}  $}
\obj(80,120)[O33]{${N-i\brack 2}{i \brack j-2}\vcenter{\hbox{\includegraphics[height=2cm, angle=0]{hopfcomp5.pdf}}}  $}
}
\begin{figure}[h]
\centerline{
\begindc{\commdiag}[10]
\obj(240,0)[O11]{${N-i\brack 0}{i \brack j-0} UP[i,0,0]  $}
\obj(160,0)[O21]{$0$}
\obj(160,60)[O22]{${N-i\brack 1}{i \brack j-1}UP[i,0,0] $}
\obj(80,120)[O33]{${N-i\brack 2}{i \brack j-2}UP[i,0,0] $}
\obj(80,60)[O32]{$0$}
\obj(80,0)[O31]{$0$}
\obj(0,0)[O41]{$\cdots$}
\obj(0,60)[O42]{$\cdots$}
\obj(0,120)[O43]{$\cdots$}
\mor{O21}{O11}{$~$}[\atleft,0]
\mor{O31}{O21}{$~$}[\atleft,0]
\mor{O41}{O31}{$~$}[\atleft,0]
\mor{O32}{O22}{$~$}[\atleft,0]
\mor{O42}{O32}{$~$}[\atleft,0]
\mor{O43}{O33}{$~$}[\atleft,0]
\mor{O22}{O21}{$~$}[\atleft,0]
\mor{O33}{O32}{$~$}[\atleft,0]
\mor{O32}{O31}{$~$}[\atleft,0]
\enddc
}
 \caption{}
\label{figHopf2}
\end{figure}

 Tracking the $q$-grading through the above computation or comparing with the decategorified invariant shows:
\begin{prop} The Poincar\'e polynomial of $\mathfrak{sl}_N$ link homology of the $(\Lambda^i, \Lambda^j)$-colored Hopf link with $i>j$, reduced with respect to $\Lambda^i$ is, up to multiplication by a monomial:
\[\mathcal{P}^N_{i,j}(Hopf,\Lambda^i)= \sum_{k=0}^j t^{2k} q^{k(2+N)}  {N-i\brack k}{i \brack j-k}.\]
\end{prop}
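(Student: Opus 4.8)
The plan is to realise the Hopf link as the denominator closure of the rational tangle $T(2,1)$ and to extract its $\Lambda^i$-reduced $\mathfrak{sl}_N$ homology from the colored HOMFLY complex of $T(2,1)$, which is assembled from two top twists by the rules of Section~\ref{twistrules}. First I would apply a top twist to the starting object $UP[i,j,0]$: by the rule for $TUP$ (with $k=0$) the resulting chain spaces are $\bigoplus_{k=0}^{j} t^{-k}q^{k}\,UPs[i,j,k]$, organised into a complex with $UPs[i,j,k]$ in homological degree $-k$. Applying a second top twist and expanding each summand $UPs[i,j,k]$ by the rule for $TUPs$ turns this into the double complex drawn on the left of Figure~\ref{figHopf}, whose $k$-th row (up to a grading shift) is precisely the complex $TUPs[i,j,k]$.

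Next I would pass to the $\Lambda^i$-reduced $\mathfrak{sl}_N$ link homology by closing off the $\Lambda^j$-colored strand and simplifying \emph{one row at a time}, rather than working with the whole double complex. Closing the $k$-th row produces a complex $ClTUPs[i,j,k]$ of knotted webs, and using strand reversal (Lemma~\ref{doubletag}), tag-sliding, and the MOY relations \eqref{rel1}--\eqref{rel6} --- in particular the bigon- and circle-removal relations \eqref{rel2} and \eqref{rel3}, which are the source of the quantum binomials --- one reduces it, up to an overall grading shift, to
\[ClTUPs[i,j,k]\ \cong\ {N-i\brack k}{i\brack j-k}\,UP[i,0,0],\]
a complex concentrated in a single homological degree. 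This step relies on the categorified invariant extending to knotted webs, which holds for the matrix-factorization model \cite{Wu} and is expected in general from the uniqueness results in \cite{Cau2}.

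Finally I would reassemble the invariant: replacing each row of the closed-off complex by the one-term complex just found leaves a complex in the remaining (first-twist) direction whose entries are ${N-i\brack k}{i\brack j-k}\,UP[i,0,0]$ for $k=0,\dots,j$, as in Figure~\ref{figHopf2}. Tracking the grading shifts that were suppressed in the twist rules places the surviving term of the $k$-th row in homological degree $2k$ and internal $q$-degree $q^{k(2+N)}$, up to one global monomial. Since each such term sits in the extremal homological degree of its row, the connecting differentials are grading-forced to vanish --- equivalently, the known $\Lambda^i$-reduced colored HOMFLY polynomial of the Hopf link already has $j+1$ terms, so no cancellation is possible. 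Reading off the Poincar\'e polynomial then gives $\sum_{k=0}^{j} t^{2k}q^{k(2+N)}{N-i\brack k}{i\brack j-k}$.

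The main obstacle is the web identity $ClTUPs[i,j,k]\cong{N-i\brack k}{i\brack j-k}\,UP[i,0,0]$ and the accompanying bookkeeping of grading shifts, together with --- on the foundational side --- the appeal to knotted-web invariants; the latter could be sidestepped by resolving all crossings before closing and simplifying the resulting closed-web complex directly, at the cost of a much longer computation. The vanishing of the remaining inter-row differentials is likewise not automatic and is cleanest to argue by comparison with the decategorified invariant.
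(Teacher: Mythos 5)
Your proposal reproduces the paper's own argument step for step: building the double complex from two applications of the top-twist rules, closing off the $\Lambda^j$-colored strand row by row, reducing each $ClTUPs[i,j,k]$ to ${N-i\brack k}{i\brack j-k}\,UP[i,0,0]$ via the same chain of web isomorphisms (with the same appeal to knotted-web invariants and \cite{Wu}), and then killing the residual differentials by comparison with the decategorified invariant. This is essentially identical to the paper's proof, down to the figures and the foundational caveat you cite.
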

\end{exa}

\subsection{Comparison with Bigelow's and Manolescu's picture}
\label{bigelow}
In this section we explain similarities between the geometric picture described in section \ref{geom} and Bigelow's geometric model for the Jones polynomial \cite{Big1} and Manolescu's extension \cite{Man1} to a model for the generators of a chain complex computing Seidel-Smith homology (symplectic Khovanov homology) \cite{SeS}. 

Before going into details about the similarities, we want to mention the most visible differences between the construction in this paper and Bigelow's and Manolescu's construction. First of all, they work with arbitrary knots and links that are presented as closures of braids, while our geometric algorithm is (so far) restricted to rational tangles. Their picture computes generators for uncolored (i.e. $\Lambda^1$-colored) $\mathfrak{sl}_{2}$ chain complexes and it says very little about differentials. Our picture, on the other hand, computes invariants for arbitrary fundamental $\mathfrak{sl}_N$ representations and at least some components of the differential can be read off. Finally, their theory is a reduced one, yielding a one-dimensional invariant for the unknot, while ours is closer to an unreduced theory, see Section \ref{1stproof}. Note that Bigelow \cite{Big2} and Manolescu \cite{Man2} also have corresponding theories for $\mathfrak{sl}_N$, but whether they are related to each other as in the $\mathfrak{sl}_{2}$ case or to the construction here is unclear. \\

We now rephrase our geometric algorithm in the language of  \cite{Big1}. Let $M := D^2 \setminus \{X^+, X^-, Y\}$ be the usual disc in $\mathbb{C}$ with the three special points $X^+$, $X^-$ and $Y$ removed. The weight $h$ part of the colored HOMFLY complex of a colored rational tangle can be interpreted as graded intersection number of the submanifolds $A:= Sym^j(\alpha)\setminus \Delta$ and $V_h:= w_1\times \cdots w_h \times u_1\times \cdots \times u_{j-h}$ in $Conf^j(M):= Sym^j(M)\setminus \Delta$. Here $\Delta$ denotes the appropriate big diagonal. Similarly as in Bigelow's setting, the graded intersection number can be described as algebraic intersection number of lifts of $A$ and $V_h$ to a covering space specified by a surjective homomorphism $\Phi\colon \pi_1(Conf^j(M))\to \mathbb{Z}^4$. Relative gradings of intersection points in this picture can be computed by taking a loop $\gamma$ in $Conf^j(M)$ that starts at one intersection point, travels to the second intersection point along $A$ and returns along $V_h$, then the grading difference is $\Phi([\gamma])$. 
Such a loop $\gamma$ connecting intersection points $(x_1,\dots, x_j)$ and $(y_1,\dots, y_j)$ of $A$ and $V_h$ can be represented by a $j$-tuple of paths $\gamma_k$ starting at $x_k$, proceeding to $y_{\sigma(k)}$ along $\alpha$ and further to $x_{\sigma(k)}$ along the $\sigma(k)$th vertical, where $\sigma \in S_j$ is a permutation. In this picture, $\Phi$ computes a linear combination of winding numbers of the paths $\gamma_k$ around the points $X^+$, $X^-$, $Y$ and around each other.\\

$\Phi$ can be constructed in a similar way as in the papers of Bigelow \cite{Big1} and Manolescu \cite{Man1} to reproduce exactly the behaviour described in Theorem \ref{mainthm}. As an example we explain how to count the winding of arcs around $X^+$ and around each other:

\begin{exa} Define the two homomorphisms:
 \[\Phi_1\colon \pi_1(Conf^j(M))\to \pi_1(Conf^j(D^2)) = Br_{j}\to \mathbb{Z}\] 
 \[\Phi_2\colon \pi_1(Conf^j(M))\to \pi_1(Conf^j(D^2\setminus X^+))\to\pi_1(Conf^{j+1}(D^2)) = Br_{j+1}\to \mathbb{Z}\]
The first maps in both lines are induced by inclusion. The second map in the second line comes from adding the point $X^+$ to the unordered $j$-tuple. The last maps in both lines are the natural abelianization maps. Then $(\Phi_2-\Phi_1)/2 \colon \pi_1(Conf^j(M))\to \mathbb{Z}$ is a homomorphism and it counts the winding of the arcs $\gamma_k$ around $X^+$, while $\Phi_1$ alone counts twice the winding of arcs around each other. 
 \end{exa}
 
An alternative description is that $(\Phi_2-\Phi_1)/2$ and $\Phi_1$ count the winding of $\gamma$ around the divisor $X^+\times Sym^{j-1}(D^2) \subset Sym^j(D^2)$ and around the big diagonal $\Delta\subset Sym^j(D^2)$ respectively.\\

Once one knows how to count winding around the divisors of the special points and the diagonal, it is easy to assemble the correct $\Phi\colon \pi_1(Conf^j(M))\to \mathbb{Z}^4$. For example, the contribution coming from winding around the diagonal, which is exactly the non-additive part $q-Q$ of the $q$-grading, contributes $2 \Phi_1$ to the $q$-coordinate of $\Phi$. 

\begin{exa} 
The main step in the proof of Lemma \ref{sortlem} describes a half twist of arcs around each other, which causes a shift of $q^2$.
\end{exa}

\begin{exa} A oppositely oriented simple disc intersecting just one other primary intersection in its interior corresponds to a full twist of arcs around each other and causes a non-additive $q$-grading shift of $q^4$, in agreement with the statement of Theorem \ref{mainthm}.
\end{exa}

\begin{rem}
We require the path $\gamma$ that is used to compute the winding number around certain divisors to lie in $Conf^j(M)$. In particular, the part of $\gamma$ that lies on $Sym^j(\alpha)$ is required to be disjoint from the diagonal. In the statement of Theorem \ref{mainthm} and the interpretation of differentials in the colored HOMFLY complex, on the other hand, we do use paths $\gamma$ that can have intersections with the diagonal. 

The problematic cases are exactly the ones where a simple disc $D_s$ representing a differential between two generators, has an intersection with another primary intersection along its boundary $\gamma = \partial D_s$. Theorem \ref{mainthm} tells us there should be a contribution of $q^2$ for each such intersection. 

Before we can verify that this contribution comes from winding around the diagonal $\Delta$, we have to push $\gamma$ off $\Delta$. Here (and in the generic case) $\gamma$ only hits the multiplicity $2$ part of the diagonal and, hence, we can restrict to the model case of $Sym^2(\alpha) \subset Sym^2(D^2)$ where we can explicitly  describe the canonical push-off $\gamma'$ of $\gamma$. Figure \ref{figdisc} shows the typical situation of a simple disc $D_s$ (shaded) intersecting another primary intersection (blue dot) along its boundary. 
\begin{figure}[h]
\centerline{
\begindc{\commdiag}[10]
\obj(0,0)[A]{$\vcenter{\hbox{\includegraphics[height=2cm, angle=0]{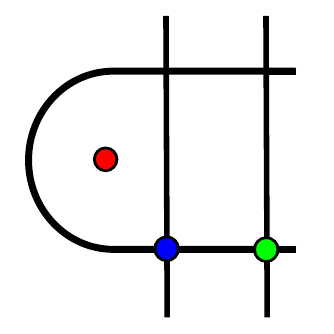}}}$}
\obj(160,0)[B]{$\vcenter{\hbox{\includegraphics[height=2cm, angle=0]{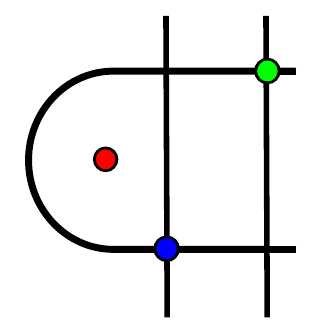}}}$}
\obj(80,30)[C]{$\vcenter{\hbox{\includegraphics[height=1.6cm, angle=0]{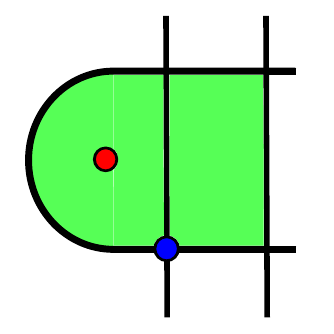}}}$}
\mor{A}{B}{$~$}[\atleft,0]
\enddc
}
\caption{}
\label{figdisc}
\end{figure}

Theorem \ref{mainthm} tells us that the grading difference of the generators shown on both sides of the figure consists of a contribution from winding around the special point (red dot) and a contribution of $q^2$ from the intersection of the simple disc $D_s$ with the other primary intersection (blue dot). The standard loop $\gamma = \partial D_s$ is given by moving the green dot from its position in the left image around the left bend (producing the right image) and back along its vertical while keeping the blue dot fixed. $\gamma$ intersects the diagonal when the two dots coincide.

The canonical push-off $\gamma'$, which is disjoint from the diagonal, is given by the following two arcs. The green dot moves left from its position in the left image to the original position of the blue dot while the blue dot itself moves left and around the bend (which produces the right image with colors swapped) and then down along the vertical (which produces the left image again, but with colors swapped). 

By forgetting the special point (red dot), it is easy to see that the arcs representing $\gamma'$, viewed as a braid in $Br_2$ are just the braid group generator. Thus $\Phi_1(\gamma')=1$ as required for the contribution $q^2$. Also, $\gamma'$ still winds once around the divisor of the special point.
\end{rem}

\section{The color stable HOMFLY polynomial}
\label{conj2}
This section gives two proofs for Conjecture \ref{conjA} on the decategorified level of polynomial HOMFLY invariants. The first one uses skew Howe duality and tries to stay as close to the categorified setting as possible. The second proof uses skein theory and proves Proposition \ref{colshiftprop} for arbitrary links with an unknot component. We define the color stable HOMFLY polynomial of a link with an unknot component and prove that for 2-component links it specializes to the multivariable Alexander polynomial. Finally we compare the color stable HOMFLY polynomial with multivariable link invariants arising from the Lie superalgebras $\mathfrak{sl}_{m|n}$ as described in \cite{GP1}, \cite{GP2} and \cite{GPT}.

\subsection{First proof}
\label{1stproof}
We start by giving a proof for the decategorified Conjecture 1 for rational links that stays as close as possible to the categorified version.\\

Suppose we are given an $(\Lambda^i,\Lambda^j)$-colored rational two-component link that can be written as closure of a positive rational tangle with all-upwards boundary orientations. Then the Poincar\'e polynomial of this tangle decategorifies by setting $t=-1$ to a $\mathbb{Z}[a^{\pm 1},q^{\pm 1},s^{\pm 1}]$-linear combination of webs $UP[i,j,k]$.\\

The next step is to take the closures of the webs $UP[i,j,k]$ and evaluate them to elements of the ground ring, for example by using relations \eqref{rel2} and \eqref{rel3} in the $\mathfrak{sl}_N$ spider category. We use this opportunity to demonstrate, as an alternative, the simplification process described in section \ref{simpl}.\\

The closure of $UP[i,j,k]$ is the skew Howe image of  $E_3^{(j)}F_1^{(i)}E_2^{(k)}F_2^{(k)}E_1^{(i)}F_3^{(j)}1_{\lambda}\in _{\mathcal{A}}\dot{U}(\mathfrak{sl}_{4})$ where $\lambda$ corresponds to the sequence $(N,0,0,N)$. Using the commutation relations for $E$s and $F$s and the fact that some weight spaces for the $\mathfrak{sl}_{4}$ action are trivial, we can simplify this expression:\footnote{For clarity we use as subscript in $1_{(a,b,c,d)}$ the sequence $(a,b,c,d)$ instead of the corresponding weight $\mu$.}
\begin{align*} &E_3^{(j)}F_1^{(i)}E_2^{(k)}F_2^{(k)}E_1^{(i)}F_3^{(j)}1_{(N,0,0,N)}
\cong E_3^{(j)}E_2^{(k)}F_1^{(i)}E_1^{(i)}1_{(N,k,j-k,N-j)}F_2^{(k)}F_3^{(j)} \\
&\cong {N-k \brack i} E_3^{(j)}E_2^{(k)}F_2^{(k)}1_{(N,0,j,N-j)}F_3^{(j)} \cong {N-k \brack i}{j \brack k} E_3^{(j)} F_3^{(j)}1_{(N,0,0,N)}\\ 
&\cong {N-k \brack i}{j \brack k}{N \brack j} 1_{(N,0,0,N)} \cong {N-k \brack i}{j \brack k}{N \brack j} 1_{(0,0,N,N)}    
\end{align*}

In the HOMFLY evaluation we replace ${N+b \brack c}$ by ${b \brack c}_a:= \prod_{k=1}^c \frac{a q^{b-k+1}-a^{-1}q^{-b+k-1}}{q^{k}-q^{-k}}$. The closure of $UP[i,j,k]$ thus evaluates to ${-k \brack i}_a{j \brack k}{0 \brack j}_a = {-k \brack j-k}_a{-i \brack k}_a{0 \brack i}_a$.\\

In order to get the behaviour claimed in Conjecture 1, we have to reduce with respect to the higher color $\Lambda^i$. On the decategorified level, this just means dividing by ${0\brack i}_a$, the invariant of the $\Lambda^i$-colored unknot. We can write the reduced evaluation of the closure of $UP[i,j,k]$ as:
\[{-k \brack j-k}_a{-i \brack k}_a = {-k \brack j-k}_a \prod_{l=1}^k \frac{a q^{-i-l+1}-a^{-1}q^{i+l-1}}{q^{l}-q^{-l}}= {-k \brack j-k}_a \prod_{l=1}^k \frac{a s^{-1} q^{-j-l+1}-a^{-1}s q^{j+l-1}}{q^{l}-q^{-l}}  \]

This shows that in the reduced case, not only the coefficients coming from the grading shifts in the colored HOMFLY complex, but also the evaluation of $UP[i,j,k]$  depends in a controlled way on the higher color $i$. More precisely, we have shown that there exists a three-variable invariant\footnote{Up to multiplication by a monomial.} of two-component rational links, which takes values in $\mathbb{Z}[a^{\pm 1},s^{\pm 1}](q)$, and for any $i\geq j$ it specializes to the higher-color reduction of the $(\Lambda^i,\Lambda^j)$-colored HOMFLY polynomial under setting $s=q^{i-j}$.

\begin{rem}
The method of computing colored HOMFLY polynomial of 2-bridge links explained in this subsection in fact provides explicit q-holonomic formulas in the sense of \cite{Gar}. For more details and a Wolfram Mathematica implementation of this algorithm see \cite{Wed}.
\end{rem}

\subsection{Second proof}
\label{2ndproof}
A different proof of Conjecture 1 on the decategorified level is possible via skein theory. In fact, this proof works for arbitrary colored links $L'$ with a $\Lambda^i$-colored unknot component $U$. Let $L=L'\setminus U$ which we consider to live in a solid torus.

The idea is to compute the colored HOMFLY polynomial of $L'$ in two steps. First one evaluates $L$ in the skein of the annulus onto which the solid torus projects. Second one pairs this again with the $\Lambda^i$ colored unknot. 

\begin{defi} Let $F$ be an open subset of $\mathbb{R}^2$. Then define $S(F)$ to be the free $\mathbb{Z}[a^{\pm 1}](q)$-module spanned by closed webs embedded in $F$, modulo web relations inside $F$. $S(F)$ is called the HOMFLY skein of $F$.

If $G\subset F$ is an inclusion of open subsets of $\mathbb{R}^2$, then there is a canonical homomorphism $S(G)\to S(F)$ by interpreting webs as lying in $F$. The homomorphism $S(F)\to S(\mathbb{R}^2)$ is called evaluation and is denoted by $\langle . \rangle$.
\end{defi}

\begin{exas}~
\begin{enumerate}
\item $S(\mathbb{R}^2)$ is free of rank 1 over $\mathbb{Z}[a^{\pm 1}](q)$ and is spanned by the empty diagram.
\item The skein of the annulus $S(A)$ has the structure of a commutative algebra, where multiplication is given by stacking two annuli inside each other. 
\end{enumerate}
\end{exas}

By projecting a link $L$ lying in $F\times I$ onto $F$ and replacing crossings via the formulas in section \ref{RT}, $L$ can be regarded as an element of the skein $S(F)$. Up to multiplication by a scalar depending on framing, this is well defined. 

\begin{lem} 
As an algebra the skein of the annulus is freely generated by the set $\{d_j \mid j\in \mathbb{Z}\}$ where $d_j$ is given by a $S^{|j|}$-colored longitudinal unknot in counter clockwise (clockwise) orientation if $j>0$ ($j<0$) and $d_0$ is the empty diagram. Here $S^k$ stands for the one-row Young diagram with $k$ boxes; for $\mathfrak{sl}_N$ this corresponds to the $k^{th}$ symmetric power of the standard representation.

Another free generating set is given by $\{\phi_j \mid j\in \mathbb{Z}\}$ where $\phi_j$ is the closure of the $|j|$-strand braid $\sigma_{|j|-1}\cdots \sigma_{1}$, written in standard braid group generators, with orientation counter clockwise (clockwise) if $j>0$ ($j<0$). 
\end{lem}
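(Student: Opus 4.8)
The plan is to reduce the statement to the well-known structure of the oriented HOMFLY skein of the annulus and then recognise the two proposed sets inside it. Write $R := \mathbb{Z}[a^{\pm 1}](q)$ for the ground ring. First I would recall that $S(A)$ splits as a tensor product $S(A) \cong S(A)^{+}\otimes_{R} S(A)^{-}$, where $S(A)^{+}$ (resp.\ $S(A)^{-}$) is the subalgebra spanned by closed webs all of whose edges run counterclockwise (resp.\ clockwise) around the annulus. This is Turaev's description of the annular skein; in the web language of this paper it follows by using relations \eqref{rel5} and \eqref{rel6}, together with \eqref{rel2} and \eqref{rel3}, to isotope and resolve any closed web so that all counterclockwise material sits in an inner sub-annulus and all clockwise material in an outer one, after which the two parts no longer interact. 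The symmetry of the annulus reversing the circular direction exchanges $S(A)^{+}$ and $S(A)^{-}$ and carries $d_{j}\mapsto d_{-j}$ and $\phi_{j}\mapsto \phi_{-j}$, so it suffices to prove that $S(A)^{+}$ is a free polynomial algebra over $R$ on $\{d_{j}: j\geq 1\}$ and, separately, on $\{\phi_{j}: j\geq 1\}$.

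Next I would invoke the standard fact that $S(A)^{+}$ has an $R$-basis $\{Q_{\lambda}\}$ indexed by partitions $\lambda$, where $Q_{\lambda}$ is the closure of the web carrying the Schur idempotent for $\lambda$ in the Hecke algebra (this is essentially contained in Theorem \ref{ckmthm} and the Hecke-algebra presentation of webs; cf.\ Morton--Aiston and Hadji--Morton). Under $Q_{\lambda}\mapsto s_{\lambda}$ the stacking product on $S(A)^{+}$ becomes the Littlewood--Richardson product, so $S(A)^{+}\cong \mathrm{Sym}_{R}=R[h_{1},h_{2},\dots]$, the ring of symmetric functions over $R$. Since the $S^{j}$-colored longitude $d_{j}$ is by definition the closure of the one-row idempotent, $d_{j}=Q_{(j)}=s_{(j)}=h_{j}$, and because $\{h_{j}:j\geq 1\}$ is a free polynomial generating set of $\mathrm{Sym}_{R}$, the set $\{d_{j}:j\in\mathbb{Z}\}$ freely generates $S(A)$. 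This proves the first assertion.

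For the second set, expanding the braid $\sigma_{j-1}\cdots\sigma_{1}$ in the idempotent basis of the Hecke algebra $H_{j}$ and closing it up in the annulus expresses $\phi_{j}$ as an $R$-linear combination of the $Q_{\lambda}$ with $\lambda\vdash j$; in particular $\phi_{j}$ is homogeneous of degree $j$ in $S(A)^{+}$ and the coefficient of $Q_{(j)}$ is a nonzero Laurent polynomial in $q$ (for instance, in a suitable normalisation, $\phi_{2}=[2]\,Q_{(2)}-q^{-1}Q_{(1,1)}$). Hence $\phi_{j}=v_{j}\,h_{j}+(\text{an }R\text{-polynomial in }h_{1},\dots,h_{j-1})$ with $v_{j}\in R^{\times}$, and an upper-triangular change of generators gives $R[\phi_{1},\dots,\phi_{j}]=R[h_{1},\dots,h_{j}]$ for every $j$. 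Since $\{d_{j}\}$ is already known to be a free polynomial generating set, so is $\{\phi_{j}:j\in\mathbb{Z}\}$. Alternatively this last step can be replaced by citing Morton and Hadji--Morton, where exactly these ``geometric power sums'' are shown to freely generate the full HOMFLY skein of the annulus.

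The conceptual content here is entirely classical, so the only genuinely delicate point is the last one: checking that the transition between the $\phi_{j}$ and the basis $\{Q_{\lambda}\}$ is unitriangular over the precise ground ring $R=\mathbb{Z}[a^{\pm 1}](q)$, i.e.\ that the leading coefficients $v_{j}$ are actually invertible there rather than only after passing to $\mathbb{Q}(a,q)$. Everything else is bookkeeping with the web relations and with the dictionary between webs, Hecke-algebra idempotents, and symmetric functions.
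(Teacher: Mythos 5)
Your proposal is correct and follows essentially the same chain of reasoning that the paper sketches: Turaev's splitting $S(A)\cong S(A)^{+}\otimes_{R}S(A)^{-}$, the identification of $S(A)^{+}$ with symmetric functions via the Schur-idempotent basis $\{Q_{\lambda}\}$ (which is Lukac's and Aiston's result), and the dictionary $d_{j}\leftrightarrow h_{j}$. The only real difference in emphasis is the order of the two conclusions: the paper simply cites Turaev directly for the statement that $\{\phi_{j}\}$ freely generates and then cites Lukac for $\{d_{j}\}$, whereas you first establish $\{d_{j}\}$ and then recover $\{\phi_{j}\}$ by an upper-triangular change of generators in the $\{Q_{\lambda}\}$ basis, flagging the invertibility of the leading coefficient as the delicate point. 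That unitriangularity is indeed the actual content of Turaev's argument (the closure of $\sigma_{j-1}\cdots\sigma_{1}$ expands over partitions of $j$ with a leading coefficient that is a nonzero Laurent polynomial in $q$ alone, hence a unit in $R=\mathbb{Z}[a^{\pm 1}](q)$), so the two routes are logically equivalent; yours is somewhat more self-contained while the paper's is more economical.
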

\begin{proof} The second generating set is due to Turaev \cite{Tu}. This also shows that $S(A)$ splits as an algebra into a product $S(A)=S(A)^+ \times S(A)^-$ of isomorphic algebras which are freely generated by $\{\phi_j \mid j\in \mathbb{N}\}$ and $\{\phi_{-j} \mid j\in \mathbb{N}\}$ respectively. Lukac \cite{Luk} showed that $S(A)^{\pm}$ are isomorphic to the ring of symmetric functions on a countably infinite alphabet with the $i^{th}$ complete (elementary) symmetric function corresponding to a $S^i$- ($\Lambda^i$-) colored unknot $d_i$ ($c_i$) with the appropriate orientation. See also \cite{Ais}.
\end{proof}

Given an element $X$ of $S(A)$ and $i\in \mathbb{Z}$ we can get a new element $\psi_i(X)$ of $S(A)$ by linking $X$ with a meridional $\Lambda^i$-colored unknot $c_i$.

\begin{lem} 
For $i\in \mathbb{Z}$ there exist algebra homomorphisms $ t_i \colon S(A)\to \mathbb{Z}[a^{\pm 1}](q)$ given by 
\[ t_i(X)= \frac{\langle \psi_i(X)\rangle}{\langle c_i \rangle}.
\]
\end{lem}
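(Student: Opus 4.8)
The plan is to recognise $t_i(X)$ as the scalar by which a suitable $(1,1)$-tangle acts on the irreducible representation $\Lambda^i V$; the algebra structure on $S(A)$ then gets transported to ordinary multiplication of scalars. First a preliminary remark: $\langle c_i\rangle$ is, up to a monomial, the quantum dimension of $\Lambda^i$, which is invertible in the coefficient ring, so the formula for $t_i$ is meaningful and takes values in $\mathbb{Z}[a^{\pm 1}](q)$; moreover $t_i$ is $\mathbb{Z}[a^{\pm 1}](q)$-linear, since both $X\mapsto\psi_i(X)$ and $\langle\,\cdot\,\rangle$ are linear. Hence it remains to check $t_i(1)=1$ and $t_i(XY)=t_i(X)t_i(Y)$, and by bilinearity we may assume $X$ and $Y$ are single diagrams (the product $XY$ of two diagrams in $S(A)$ being again a diagram, obtained by nesting).

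Next I would invoke the annulus / solid-torus / tangle dictionary. Regard a diagram $X$ in $A$ as a link (or knotted web) $L_X$ in a solid torus $ST$; then $\psi_i(X)$ is $L_X$ together with the core $U$ of the complementary solid torus $S^3\setminus ST$, coloured by $\Lambda^i$, so that $\langle\psi_i(X)\rangle$ is the $\mathfrak{sl}_N$ (equivalently HOMFLY) invariant of $L_X\cup U$. Now cut $U$ open at one point: this presents $L_X\cup U$ as the closure of a $(1,1)$-tangle $T_X$ both of whose endpoints are coloured $\Lambda^i$. Since $\Lambda^i V$ is irreducible (at each $N\gg 0$, or directly in the Hecke algebra / skein-theoretic setting), Schur's lemma forces $T_X=c(X)\cdot\mathrm{id}_{\Lambda^i V}$ for a unique scalar $c(X)$, and taking closures gives $\langle\psi_i(X)\rangle=c(X)\langle c_i\rangle$, i.e.\ $c(X)=t_i(X)$. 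The empty diagram yields the tangle $\mathrm{id}_{\Lambda^i V}$, so $t_i(1)=1$.

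For multiplicativity the crux is that the product in $S(A)$ is realised geometrically by stacking along the component $U$: placing $X$ and $Y$ in the two halves $S^1\times[0,\tfrac12]$ and $S^1\times[\tfrac12,1]$ of the annulus and cutting $U$ at the appropriate point exhibits $L_{XY}\cup U$ as the composite $(1,1)$-tangle $T_X\circ T_Y$ --- equivalently, $L_{XY}\cup U$ is the connected sum $(L_X\cup U)\#_U(L_Y\cup U)$ of $L_X\cup U$ and $L_Y\cup U$ along their common $\Lambda^i$-coloured component $U$. Granting this, applying the Reshetikhin-Turaev functor and Schur's lemma once more gives $c(XY)\,\mathrm{id}_{\Lambda^i V}=T_{XY}=T_X\circ T_Y=c(X)c(Y)\,\mathrm{id}_{\Lambda^i V}$, hence $t_i(XY)=t_i(X)t_i(Y)$, so $t_i$ is an algebra homomorphism. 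I expect the main obstacle to be precisely this geometric identification $T_{XY}=T_X\circ T_Y$: it requires care with the annulus/solid-torus/$(1,1)$-tangle dictionary and with orientations, and with the fact that $\langle\,\cdot\,\rangle$ is only defined up to a framing-dependent scalar, so one must verify that this ambiguity genuinely cancels between the two sides of the stacking (connected-sum) formula rather than producing an extra factor.
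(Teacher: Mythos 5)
Your cut-the-meridian / Schur-scalar strategy is the right one --- cut $U$ open to a $(1,1)$-tangle $T_X$, note $T_X=c(X)\,\mathrm{id}$, close back up, and observe that the nesting product on $S(A)$ becomes composition of $(1,1)$-tangles --- and it is in substance the argument behind the Morton--Lukac citation the paper gives as its proof. However, your opening remark that $\langle c_i\rangle$ is ``invertible in the coefficient ring'' $\mathbb{Z}[a^{\pm 1}](q)$ is false: already $\langle c_1\rangle=(a-a^{-1})/(q-q^{-1})$, and $a-a^{-1}$ is not a unit in $\mathbb{Z}[a^{\pm 1}]$. The claim that $t_i$ lands in $\mathbb{Z}[a^{\pm 1}](q)$, rather than merely in a fraction field, is exactly the nontrivial divisibility $\langle c_i\rangle\mid\langle\psi_i(X)\rangle$; you cannot conjure it from invertibility. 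The repair is already contained in your own argument, read in the right order: the skein module of a square with a single $\Lambda^i$-colored strand running bottom to top is free of rank one over $\mathbb{Z}[a^{\pm 1}](q)$ with basis the identity strand, so $c(X)\in\mathbb{Z}[a^{\pm 1}](q)$, and closing up gives the factorization $\langle\psi_i(X)\rangle=c(X)\langle c_i\rangle$ inside $\mathbb{Z}[a^{\pm 1}](q)$. Derive integrality of $t_i$ from this rank-one statement, not the other way round.

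Your closing worry about framing is unfounded in this lemma. Here $X$, $\psi_i(X)$, the cut tangle $T_X$, and its closure are all concrete diagrams, and the evaluation $\langle\,\cdot\,\rangle\colon S(F)\to S(\mathbb{R}^2)\cong\mathbb{Z}[a^{\pm 1}](q)$ is a well-defined $\mathbb{Z}[a^{\pm 1}](q)$-linear map with no undetermined normalization. The framing-dependent scalar the paper flags arises only when passing from a framed link in $F\times I$ to a diagram in $F$, a step you never take.
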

\begin{proof}
See \cite{MoL} sections 1.4 and 1.5.
\end{proof}

Recall that we have decomposed $L'$ into a $\Lambda^i$-colored unknot $U$ and some remainder $L$ in a solid torus $A\times I$. By taking a projection of $L$ onto the annulus and applying the crossing replacement rules, $L$ evaluates to some element of $S(A)$, which we denote by $\pi(L)$. The colored HOMFLY polynomial of $L'$ is then $\langle \psi_i(\pi(L))\rangle$ and its reduction with respect to color $\Lambda^i$ is $t_i(\pi(L))$. In order to prove Conjecture 1 it suffices to show that $S(A)$ has a generating set that behaves well under color shift.

\begin{prop} There exist functions $p_j\in \mathbb{Z}[a^{\pm 1},s^{\pm 1}](q)$ such that for the generating set $\{d_j\mid j\in \mathbb{Z}\}$ we have:
\[t_i(d_j) = p_j(a, s = q^i,q) \quad \forall i\geq 0.\]
\end{prop}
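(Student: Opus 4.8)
The plan is to compute $t_i(d_j)$ explicitly, since $d_j$ is nothing more than a $S^j$-colored unknot sitting longitudinally inside the annulus. Linking this with a meridional $\Lambda^i$-colored unknot $c_i$ and evaluating produces the colored HOMFLY polynomial of the Hopf link with colors $S^j$ and $\Lambda^i$, and dividing by $\langle c_i\rangle$ gives its reduction with respect to $\Lambda^i$. So concretely, I would first record that
\[
t_i(d_j) = \frac{\langle S^j\text{-},\Lambda^i\text{-colored Hopf link}\rangle}{\langle \Lambda^i\text{-colored unknot}\rangle}.
\]
The right-hand side is a well-known quantity: for the $\mathfrak{sl}_N$ specialization it can be evaluated by the same skew Howe / web-calculus technique demonstrated in the ``first proof'' section for $UP[i,j,k]$, or directly from the Hopf link formula in terms of quantum dimensions and the $S$-matrix. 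In the HOMFLY normalization (replacing ${N+b\brack c}$ by ${b\brack c}_a$) one obtains a closed product formula, and the key structural point is that every occurrence of the higher color $i$ enters only through the ``$a q^{i}$''-type monomials coming from $c_i$, so that setting $s=q^{i}$ isolates a function in $a$, $s$, $q$.

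The key steps, in order: (1) Identify $t_i(d_j)$ with the $\Lambda^i$-reduced HOMFLY invariant of the $(S^j,\Lambda^i)$-colored Hopf link. (2) Compute this invariant via skew Howe duality exactly as in Section~\ref{1stproof}: write the colored Hopf link as a skew Howe image of an element of $_{\mathcal{A}}\dot{U}$, push $E$s past $F$s, use the divided-power relations and the vanishing of extreme weight spaces, arriving at a product of quantum binomial coefficients of the form ${N-k\brack \cdot}$ and ${N\brack \cdot}$ (with appropriate shifts by $i$). (3) Pass to the HOMFLY normalization, replacing ${N+b\brack c}$ by ${b\brack c}_a$, and divide by ${0\brack i}_a$ to reduce with respect to $\Lambda^i$. (4) Observe that the resulting expression, a priori a function of $a$, $q$ and $i$, depends on $i$ only through the combination $q^{i}$ appearing in finitely many factors $\dfrac{aq^{i+c}-a^{-1}q^{-i-c}}{q^{l}-q^{-l}}$; substituting $s=q^{i}$ therefore exhibits it as the evaluation at $s=q^{i}$ of a fixed $p_j\in\mathbb{Z}[a^{\pm1},s^{\pm1}](q)$. (5) Finally, extend multiplicatively: since $\{d_j\}$ freely generate $S(A)$ as an algebra and each $t_i$ is an algebra homomorphism, it suffices to have established the claim on the generators; but the statement as phrased only asks for the generators, so this last remark is just a sanity check.

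I expect the main obstacle to be purely bookkeeping: carefully tracking the grading shifts and the exact shifts inside the quantum binomials so that the $i$-dependence really does collapse to powers of a single variable $s=q^{i}$, rather than a messier dependence. One must be slightly careful about orientations (the cases $j>0$, $j<0$, $j=0$) and about the fact that $d_j$ is colored by a \emph{symmetric} power $S^{|j|}$ while $U$ is colored by an \emph{exterior} power $\Lambda^i$ --- the mixed Hopf link evaluation involves ${N-k\brack i}$-type terms on one strand and ${a+b\brack c}$-type terms encoding the symmetric color on the other. Once the explicit formula is in hand, the color-stability statement reads off immediately, exactly as in the computation of the closure of $UP[i,j,k]$ in Section~\ref{1stproof}, which is the prototype of this argument.
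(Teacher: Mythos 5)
Your high-level plan is the same as the paper's: reduce to the evaluation of $t_i(d_j)$ as the $\Lambda^i$-reduced invariant of the $(S^j,\Lambda^i)$-colored Hopf link, compute it explicitly, and observe that the higher color $i$ enters only through $q^{\pm i}$. Step (1) is exactly right, and step (4) is the crux of why the statement holds. The paper, however, does not re-derive the Hopf link formula: it cites Morton--Lukac \cite{MoL}, Lemma 3.1, which gives
\[
t_i(d_j)=\langle d_j\rangle\,\frac{a-a^{-1}(q^{j}-q^{j-i}+q^{-i})}{a-a^{-1}},\qquad
\langle d_j\rangle=\prod_{k=0}^{|j|-1}\frac{aq^{-k}-a^{-1}q^{k}}{q^{k+1}-q^{-k-1}},
\]
and from there the stability under $s=q^{i}$ is immediate.

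The place your proposal has a real gap is step (2). The skew Howe / web computation of Section~\ref{1stproof} is formulated for strands labelled by \emph{exterior} powers $\Lambda^k$ only; this is what makes the weight-space bookkeeping in $\dot{U}(\mathfrak{sl}_{s})$ work and produce products of quantum binomials like ${N-k\brack i}{j\brack k}{N\brack j}$. The generator $d_j$ carries the \emph{symmetric} power $S^{|j|}$, which is not a fundamental representation and therefore does not appear as a single edge-label in the spider category used there. To handle it via skew Howe you would have to cable the $d_j$-strand, insert a clasp (a categorified Jones--Wenzl type projector for the row partition), and work in a strictly larger calculus; Section~\ref{1stproof} as written gives you none of that. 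Your expected output ``a product of quantum binomial coefficients of the form ${N-k\brack\cdot}$ and ${N\brack\cdot}$'' is also not the correct shape of the answer: the Morton--Lukac formula above is $\langle d_j\rangle$ times a rational correction factor $\frac{a-a^{-1}(q^{j}-q^{j-i}+q^{-i})}{a-a^{-1}}$ which is not a product of Gaussian binomials. So while you correctly flag the $S^j$ versus $\Lambda^i$ asymmetry as a point to be careful about, the proposed computation does not go through as described; one either needs the extra clasp/cabling machinery, or one should simply invoke the known skein-theoretic Hopf link evaluation as the paper does. Once you have the explicit formula in hand, the rest of your argument (steps (3)--(5)) is fine.
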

\begin{proof}
This follows readily from \cite{MoL} Lemma 3.1 where for $i,j>0$ the authors prove the first equality in the following computation (with different notation): 
\[t_i(d_j) = \langle d_j \rangle \frac{a-a^{-1}(q^{j} - q^{j-i} + q^{-i})}{a-a^{-1}} = \langle d_j \rangle \frac{a-a^{-1}(q^{j} - q^{j}s^{-1} + s^{-1})}{a-a^{-1}} .\] 
The right hand side is really polynomial in $a$ because
\[\langle d_j \rangle = \prod_{k=0}^{|j|-1} \frac{a q^{-k}-a^{-1} q^{k}}{q^{k+1} - q^{-k-1}},\] 
which is also proved in \cite{MoL}. The cases for other signs of $i$ or $j$ are similar.
\end{proof}

\begin{rem} Alternatively one can deduce the statement of the proposition for the generating set $\{\phi_j\mid j\in \mathbb{Z}\}$ from the already established decategorified Conjecture 1 for rational links. For this note that $t_i(\phi_j)$ is the $\Lambda^i$-reduced colored HOMFLY polynomial of the $(\Lambda^i,\Lambda^1)$-colored $(2,2j)$ torus link. 
\end{rem}

\subsection{The color stable HOMFLY polynomial and the multivariable Alexander polynomial.}

Let $L' = L \cup U$ be a link with an unknot component $U$ and some arbitrary coloring on $L$.

\begin{defi} 
The \emph{color stable HOMFLY polynomial} $P^{st}(L',U)\in \mathbb{Z}[a^{\pm 1},s^{\pm 1}](q)$ of $L'$ with respect to the unknot component $U$ is the unique element of $\mathbb{Z}[a^{\pm 1},s^{\pm 1}](q)$ satisfying:
 \[P^{st}(L',U)(a,s=q^i,q) = t_i(L)\quad \forall i\geq 0 .\]
\end{defi}

\begin{thm} \label{MVAthm} Let $L' = K \cup U$ be a two-component link with an unknot component $U$ and suppose $K$ is colored by $\Lambda^1$. Then:
\[P^{st}(L',U)(1, u/v, v) = (1-u^2)\Delta(L')(u^2,v^2)\] where $\Delta(L')(x,y)$ is the multivariable Alexander polynomial of $L'$ with $U$ labelled by $x$.   

\end{thm}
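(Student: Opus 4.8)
The plan is to identify $P^{st}(L',U)$ with a suitable specialization of the HOMFLY polynomial of $L'$ and then invoke the classical Torres-type formula expressing the multivariable Alexander polynomial as a specialization of the two-variable HOMFLY polynomial. Recall that by the previous subsection, $P^{st}(L',U)(a,s=q^i,q) = t_i(\pi(K))$ is the $\Lambda^i$-reduced $(\Lambda^i,\Lambda^1)$-colored HOMFLY polynomial of $L'$. First I would unfold the definition: with $U$ colored by $\Lambda^i$ and $K$ by $\Lambda^1$, the reduced invariant $t_i(\pi(K))$ equals $\langle \psi_i(\pi(K))\rangle / \langle c_i\rangle$, the full colored HOMFLY polynomial of $L'$ divided by the invariant of the $\Lambda^i$-colored unknot.

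The key step is to pass to the Alexander specialization. The classical fact (going back to work of Lickorish--Millett and made precise for the HOMFLY skein of the annulus in the references already cited, e.g. \cite{MoL}) is that when one sets $a=1$ and sends the $\Lambda^i$-colored unknot component to the appropriate variable, the HOMFLY polynomial degenerates to the multivariable Alexander polynomial. Concretely, I would write $t_i(\pi(K))$ using the formula from the proof of the previous proposition: evaluating $d_j$-type generators gives expressions of the form $\langle d_j\rangle \cdot \frac{a - a^{-1}(q^j - q^j s^{-1} + s^{-1})}{a - a^{-1}}$; setting $a=1$ forces the bracket factors $\langle d_j\rangle$ to collapse and leaves a controlled rational function in $q$ and $s$. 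Substituting $s = u/v$ and $v = q$ (so that the $\Lambda^1$-color variable becomes $v^2$ and the $U$-color variable becomes $u^2$ after the standard reparametrization $a=q^N$, $s = q^{i-j}$, noting $i-j = i-1$), the right-hand side should reorganize into $(1-u^2)\Delta(L')(u^2, v^2)$. The prefactor $(1-u^2)$ is the familiar normalization discrepancy between the reduced HOMFLY-at-$a=1$ invariant and the Alexander polynomial for a two-component link (it is the Alexander polynomial of the Hopf-type contribution, or equivalently the factor $t-1$ appearing in the Torres conditions when one component is an unknot). I would verify the identification either by checking it on a generating set of $S(A)$ — for which the formula for $t_i(d_j)$ is explicit — and using multiplicativity, or by citing the known HOMFLY-to-Alexander specialization directly and matching normalizations on a single example (e.g.\ the Hopf link).

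The main obstacle I anticipate is bookkeeping of the normalization and variable conventions: matching the $(1-u^2)$ prefactor precisely, getting the substitution $s \leftrightarrow u/v$, $v\leftrightarrow q$ to line up with the conventions for $\Delta(L')$ (which variable is $x$, which is $y$, and the symmetrized vs.\ unsymmetrized form of the Alexander polynomial), and confirming that the $a=1$ limit of the reduced colored invariant is well-defined (no spurious poles). The structural content — that the HOMFLY skein element $\pi(K)$ specializes at $a=1$ to an Alexander-type invariant depending on the meridional color through $s$ alone — follows from the explicit formulas in \cite{MoL} that were already used above, so the proof should be short once the conventions are pinned down.
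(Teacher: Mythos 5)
Your proposal correctly identifies the broad strategy (reduce to a basis of the skein of the annulus, check on generators, exploit multiplicativity of $t_i$), but it has two genuine gaps that the paper's proof must, and does, address.

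First, the factor $(1-u^2)$ is not a ``normalization discrepancy'' that can be pinned down by checking one example such as the Hopf link. The paper's argument hinges on a nontrivial lemma: writing $\pi(K)$ in the free monomial basis $\{\phi_I\}$ built from Turaev's generators $\phi_j$ (closures of $\sigma_{|j|-1}\cdots\sigma_1$), the Alexander-side evaluation satisfies
\[\Delta'(\phi_I) = (1-u^2)^{n_I-1}\prod_{k=1}^{n_I}\Delta'(\phi_{i_{I,k}}),\]
where $n_I$ is the number of factors in the monomial. That is, $\Delta'$ is \emph{not} an algebra homomorphism on $S(A)$: each extra braid closure contributes an extra factor of $(1-u^2)$. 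This is proved using Morton's formula \cite{Mor} for the multivariable Alexander polynomial of a braid closure plus axis, by inspecting the block structure of the associated Burau-type matrix. Your plan to ``use multiplicativity'' works for $t_i$, which \emph{is} an algebra homomorphism, but does not transfer to $\Delta'$ without this lemma. Matching the two on the Hopf link alone (a single-factor monomial, $n_I=1$) would never reveal this accumulating discrepancy, so the alternative of ``citing the known HOMFLY-to-Alexander specialization and matching on one example'' is not sufficient. The equality $(1-u^2)\Delta'(\phi_I)=\prod_k (1-u^2)\Delta'(\phi_{i_{I,k}})$ is exactly what makes the factor $(1-u^2)$ cancel the non-multiplicativity, and it needs proof.

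Second, your choice to work with the $d_j$ generating set (the $S^j$-colored longitudinal unknots) rather than the $\phi_j$ basis is problematic on the Alexander side. The Alexander skein evaluation $\Delta'$ is defined for honest $\Lambda^1$-colored link diagrams in the annulus; $d_j$ for $j\geq 2$ is a colored diagram, so $\Delta'(d_j)$ only makes sense after re-expanding $d_j$ in uncolored diagrams, at which point you are back to working with $\phi_j$ anyway. Moreover, the explicit formula you quote for $t_i(d_j)$ is only half the computation --- you have no comparably explicit handle on $\Delta'(d_j)$. The paper's choice of $\phi_j$ is not cosmetic: it is what makes both sides directly computable, and it is also what lets the base case $(1-u^2)\Delta'(\phi_j) = t_i(\phi_j)$ be discharged by Lemma~\ref{ratMVA}, since $\phi_j$ linked with the meridional unknot is a $(2,2j)$ torus link and hence a rational link. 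You also need the skein-relation matching step (setting $a=1$, $q=v$, $s=u/v$ turns the colored HOMFLY relation at a $\Lambda^1$--$\Lambda^1$ crossing into the Alexander relation for $v^2$-labelled strands) to justify that the coefficients in the $\phi_I$-expansion of $\pi(K)$ are the same for both invariants; you gesture at this but the claim that it is a ``classical fact (Lickorish--Millett)'' conflates the Conway specialization of ordinary HOMFLY with the multivariable statement being proved here.
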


We first prove this for rational links and show how the computation of the multivariable Alexander polynomial ties in with the geometric algorithm.

\begin{lem} The theorem is true for rational links $L'$.
\label{ratMVA}
\end{lem}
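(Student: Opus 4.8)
The plan is to combine the color stable HOMFLY polynomial computation for rational links, which we have already set up via skew Howe duality in Section \ref{1stproof}, with the well-known specialization of the HOMFLY polynomial to the Alexander polynomial. First I would recall that by Lemma \ref{ratMVA}'s hypothesis we may take $L'$ to be the denominator closure of a positive rational tangle $T$ with the component $U$ colored by $\Lambda^i$ and the component $K$ colored by $\Lambda^1$ (so $j=1$), and that in Section \ref{1stproof} we showed that the $\Lambda^i$-reduced $(\Lambda^i,\Lambda^1)$-colored HOMFLY polynomial is obtained from a $\mathbb{Z}[a^{\pm 1},s^{\pm 1}](q)$-valued invariant by the substitution $s=q^{i-1}$ (equivalently $s=q^i$ after a harmless shift). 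Hence $P^{st}(L',U)$ exists for rational $L'$ and equals the reduced evaluation assembled from the closures of the webs $UP[i,1,k]$ for $k=0,1$, weighted by the $t=-1$ specialization of the twist-rule coefficients.

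Next I would carry out the specialization $a=1$, $s=u/v$, $q=v$ on the explicit formula. The reduced evaluation of the closure of $UP[i,1,k]$ was computed to be ${-k \brack 1-k}_a \prod_{l=1}^k \frac{a s^{-1}q^{-1-l+1}-a^{-1}s q^{1+l-1}}{q^l-q^{-l}}$; for $j=1$ this is simply $1$ when $k=0$ and $\frac{a s^{-1}-a^{-1}s}{q-q^{-1}}$ when $k=1$. Setting $a=1$ this becomes $1$ and $\frac{s^{-1}-s}{q-q^{-1}}$ respectively, and under $s=u/v$, $q=v$ the second term becomes $\frac{v/u - u/v}{v-v^{-1}} = \frac{v^2-u^2}{u(v^2-1)}$. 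I would then feed these together with the specialized twist-rule coefficients (at $t=-1$, $a=1$, $s=u/v$, $q=v$) through the recursion for $T(p,q)$, and on the other side run the classical Burde--type recursion (or the Kauffman/Conway skein recursion) for the multivariable Alexander polynomial of two-bridge links with the two meridians given the variables $x=u^2$ and $y=v^2$. Both recursions are governed by the same continued fraction $[a_1,\dots,a_r]$, so it suffices to check agreement on $T(0,1)$ and $T(1,0)$ and that one top twist and one right twist act identically (up to the overall factor $(1-u^2)$) on both sides; this reduces the theorem to a finite matching of $2\times 2$ transfer matrices.

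An equivalent and perhaps cleaner route, which I would at least mention, is to bypass the explicit recursion: the Reshetikhin--Turaev $\mathfrak{sl}_N$ invariant colored by $\Lambda^1$ on $K$ and $\Lambda^i$ on $U$, reduced by the $\Lambda^i$-colored unknot, specializes after $a=q^N\to$ the Alexander setting exactly to the multivariable Alexander polynomial with the $U$-variable recording the color $i$; this is the classical fact that the HOMFLY polynomial at $a=1$ is $(q-q^{-1})$ times Conway's potential, together with its colored refinement for one variable color on an unknot component (which is precisely the $\mathfrak{gl}(1|1)$ specialization). The color stable variable $s=q^i$ then becomes the extra Alexander variable, and the prefactor $(1-u^2)$ arises because our reduced normalization differs from the Conway-normalized one by exactly the unknot factor.

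The main obstacle I expect is bookkeeping of normalizations: tracking the monomial ambiguities (the theorem and the definitions are only stated up to multiplication by a monomial), pinning down the precise relation between the variable $s$ and the second Alexander variable, and getting the $(1-u^2)$ prefactor on the nose rather than up to a unit. Concretely, the delicate point is that $P^{st}$ was defined by the substitution $s=q^i$ for the $\Lambda^i$-colored unknot, whereas in Section \ref{1stproof} the natural variable was $s=q^{i-j}=q^{i-1}$; reconciling these (a shift by $q$) and then matching the resulting two-variable polynomial with the symmetrized multivariable Alexander polynomial $\Delta(L')(u^2,v^2)$, whose normalization is only defined up to $\pm u^a v^b$, is where the care is needed. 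Once the normalization is fixed by evaluating both sides on the simplest case (e.g.\ the Hopf link, $T(2,1)$, where $\Delta = 1$ up to units and our computation in the Hopf-link example of Section \ref{geom} pins down the reduced HOMFLY invariant), the rest is the routine finite check described above.
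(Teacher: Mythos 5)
There is a concrete computational gap at the very start of your argument, and it changes the structure of everything downstream. You claim that for $j=1$ the $\Lambda^i$-reduced evaluation of $UP[i,1,k]$ is ``simply $1$ when $k=0$.'' It is not: the formula from Section \ref{1stproof} gives ${-k \brack j-k}_a\prod_{l=1}^k(\cdots)$, and for $j=1$, $k=0$ this is ${0\brack 1}_a=\frac{a-a^{-1}}{q-q^{-1}}$, an unknot factor, not $1$. Under the specialization $a=1$ this term \emph{vanishes}, and that vanishing is precisely what lets the paper throw away the weight-$0$ generators and reduce to counting intersection points with a single vertical (which then get paired by simple discs into intersections of $\alpha$ with the real axis). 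With your spurious $k=0$ contribution of $1$, the transfer-matrix bookkeeping you envisage would not close. You also dropped the $q^{\mp l}$ factors from the product for $k=1$ (it should be $\frac{as^{-1}q^{-1}-a^{-1}sq}{q-q^{-1}}$ for $j=1$, $k=1$), which changes the specialized value by a non-monomial factor of $(v^2-u^2)/(v(1-u^2))$ and so is not absorbable into a renormalization.

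Beyond that numerical point, your proposed route is genuinely different from the paper's. The paper interprets the 2-bridge diagram as a genus-two Heegaard diagram, extracts a one-relator presentation of $\pi_1$ of the complement, and identifies the summands of the Fox derivative with intersections of $\alpha$ with the real axis graded by winding numbers around $Y$, $X^\pm$ --- i.e.\ it matches the geometric algorithm of Section \ref{geom} against a Fox-calculus computation term by term. Your route (a) instead proposes to match two $2\times 2$ transfer-matrix recursions governed by the continued fraction of $p/q$; this is a plausible strategy and would make for a cleaner algebraic proof, but you have not written down the Alexander-side transfer matrix or verified that its base cases and twist steps agree with the specialized twist-rule coefficients, so it is a plan rather than a proof. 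Your route (b), invoking the $\mathfrak{gl}(1|1)$/Conway specialization directly, is close in spirit to the Geer--Patureau-Mirand comparison in Section 5.4 but is not self-contained here: the claim that the unknot's color $i$ shows up as the second Alexander variable is essentially the content of the lemma, and one should be wary of circularity, particularly since in the paper the rational case (via torus links $\phi_{i}$) is used as the \emph{base case} for the skein-theoretic proof of the full Theorem \ref{MVAthm}. So before either route can stand, fix the $k=0$ evaluation (it vanishes at $a=1$) and the $q^{\pm 1}$ factors, and then actually exhibit the transfer matrices on both sides if you go route (a).
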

\begin{proof} 
Actually we prove: \[(q-q^{-1})P^{st}(L',U)\mid_{a=1,~s=u/v,~ q=v} = (1-v^2)(1-u^2)\Delta(L')(u^2,v^2)\]

We may assume that $L'$ is the closure of a rational tangle with odd length of continued fraction expansion. Since we work with a $(\Lambda^j,\Lambda^1)$-colored tangle, we only need one vertical, left or right, in the geometric picture. We assume that the tangle has all upward boundary orientation and the colored HOMFLY complex has objects $UP[j,1,1]$ and $UP[j,1,0]$. The case of the other orientation with objects $OP[j,1,1]$ and $OP[j,1,0]$ is analogous.\\

In section \ref{2ndproof} we have computed the reduced closures of the objects $UP[j,1,k]$. Their contributions to the left hand side of the above equation are computed as follows:
\[\text{Reduced closure of} 
\begin{cases}
UP[j,1,0] \\
UP[j,1,1]
\end{cases} = 
\begin{cases}
{0\brack 1}_a{-j\brack 0}\\
{-1\brack 0}_a{-j\brack 1}
\end{cases}\to\begin{cases}
a-a^{-1} \\
a q^{-1}s^{-1}-a^{-1}q s
\end{cases}\to \begin{cases}
0\\
u^{-1}(1-u^2)
\end{cases}\]
Here the first arrow is multiplication by $q-q^{-1}$ and the second is the substitution $a=1, s= u/v, q=v$.\\

In the geometric algorithm we, hence, only need to count intersection points with the left vertical. Note that because the continued fraction expansion has odd length, the last twist applied to the diagram was a top twist. Thus the intersection points with the left vertical are paired up by simple discs and their relative grading is $-v^2$. We can replace two paired intersection points by the single intersection point of $\alpha$ with the real axis along the segment of $\alpha$ that joins the pair. The contribution of such a double intersection point is thus $u^{-1}(1-v^2)(1-u^2)$. The relative gradings of such double intersection points can be computed from winding numbers of connecting paths around the special points $Y,X^-,X^+$, which count as $-v^2$, $-u^2$ and $-u^{-2}$ under the substitution. Here a connecting path starts at one double intersection point on the real axis, travels along $\alpha$ to the second double intersection point and returns on the real axis. This shows that, up to multiplication by a monomial, the geometric algorithm for the the modified colored HOMFLY polynomial outputs $(1-u^2)(1-v^2)P(u^2,v^2)$ where $P$ is some two-variable polynomial. It remains to show that $P=\Delta(L')$.\\

A classical way of computing the multivariable Alexander polynomial is via Fox calculus on a presentation of the fundamental group of the link complement. Since our diagrams are essentially genus two Heegaard diagrams of the link complement, we can extract a presentation $\langle u^2, v^2 \mid w =1\rangle$ for its fundamental group by the following procedure. First we have to replace the arc $\alpha$ by the embedded circle $\overline{\alpha}$ which is the boundary of a small neighbourhood of $\alpha$. Starting from any point on $\overline{\alpha}$ the word $w$ is assembled from letters $\{u^2,u^{-2},v^2, v^{-2}\}$ by appending a letter $v^{\pm 2}$ for every intersection with the segment $[-2, -1]$ on the real axis, where the exponent depends on whether $\alpha$ hits the real axis from above or below, and similarly $u^{\pm 2}$ for intersections with the segment $[1,2]$. 
Then the multivariable Alexander polynomial can be extracted from the presentation, by taking the Fox derivative of $w$ with respect to the variable $v^2$ and then dividing by $(1-u^2)$.\\

\textbf{Claim:} The summands produced by the Fox derivative are in bijection with the intersection points of $\overline{\alpha}$ with the segment $[-2,-1]$ on the real axis and their relative gradings are determined by the winding of connecting paths around the special points $Y,X^-,X^+$, which count as $-v^2$, $-u^2$ and $-u^{-2}$. The connecting paths run along $\overline{\alpha}$ from one intersection point to the other and back on the real axis. 
The proof of this claim is an exercise for the reader who is familiar with the Fox derivative.\\

One can further simplify this picture by noting that in our case intersection points of $\overline{\alpha}$ with $[-2,-1]$ always come in pairs that correspond to an intersection of $\alpha$ with $[-2,-1]$. Furthermore, these pairs have relative grading $-u^2$ and hence we expect a factor of $(1-u^2)$ in the result of the Fox derivative --- exactly the factor that has to be cancelled in order to get the multivariable Alexander polynomial. This shows that $\Delta(L')(u^2,v^2)$ can be directly computed by counting intersections of $\alpha$ with $[-2,-1]$ where relative gradings are computed as winding numbers of connecting paths around the special points $Y,X^-,X^+$, which count as $-v^2$, $-u^2$ and $-u^{-2}$, exactly as described by the specialization of the geometric algorithm for the colored HOMFLY complex. Thus $P = \Delta(L')$ and we are done. 
\end{proof}

\begin{rem}
The statement of the lemma can also be interpreted as saying that the geometric algorithm in section \ref{geom} computes the link Floer homology of $L$, because for rational links it contains exactly as much information as its multivariable Alexander polynomial. It would be interesting so see if the color stability of Conjecture 1 could be related to link Floer homology of a more general class of links with unknot components.
\end{rem}

\begin{proof}[Proof of Theorem \ref{MVAthm}]
We prove the theorem in two steps. First we compare the skein relations in the HOMFLY skein of the annulus and in an appropriate Alexander skein of the annulus. In the second step we use Lemma \ref{ratMVA} in the case of $(2,2k)$ torus links to compare the two polynomials on a common basis for the skeins. \\

For the first step pick a crossing $c$ in a diagram of $L'$ that does not involve strands in $U$ and denote by $L^+$, $L^-$ and $L^0$ the diagrams which have a positive crossing, a negative crossing and the oriented resolution of the crossing at position $c$ respectively. Because the involved strands are $\Lambda^1$-colored the colored HOMFLY polynomial $P$ satisfies:
\[a P(L^+)-a^{-1} P(L^-)= (q-q^{-1})P(L^0)\]
After substituting variables $a = 1, q = v, s = u/v$ we get exactly the skein relation for the multivariable Alexander polynomial for crossings whose strands are both labelled by $v^2$:
\[\Delta(L^+)-\Delta(L^-)= (v-v^{-1})\Delta(L^0)\]

From now on we assume that we have substituted variables $a = 1, q = v, s = u/v$ in all expressions. \\

Let $\Delta'(X)$ for a link $X$ in the annulus denote the evaluation of $X$ linked with an unknot via the skein theory of the multivariable Alexander polynomial, with $X$ labelled by $v^2$ and the unknot labelled by $u^2$. By virtue of the identical skein relation in the annulus, we can write $P^{st}(L',U) =  \sum_{I\in A} a_I t_i(\phi_I)$ and 
$\Delta(L) =  \sum_{I\in A} a_I \Delta'(\phi_I)$ with the same coefficients $a_I \in \mathbb{Z}[u^{\pm 1}, v^{\pm 1}]$ and with $\phi_I$ denoting a monomial in braid closures $\phi_i$ in the annulus.\\

\begin{lem} Let $\phi_I=\prod_{k=1}^{n_I} \phi_{i_{I,k}}$
then $\Delta'(\phi_I)= (1-u^2)^{n_I-1} \prod_{k=1}^{n_I} \Delta'(\phi_{i_{I,k}})$. 
\end{lem}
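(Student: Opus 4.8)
The plan is to induct on $n_I$. The case $n_I=1$ is vacuous, so suppose $n_I\ge 2$ and write $\phi_I=\phi_{I'}\cdot\phi_m$ with $\phi_{I'}=\prod_{k=1}^{n_I-1}\phi_{i_{I,k}}$ and $m=i_{I,n_I}$. It then suffices to prove the two-factor identity
\[
\Delta'(X\cdot\phi_m)=(1-u^2)\,\Delta'(X)\,\Delta'(\phi_m)
\]
for an arbitrary link $X$ in the annulus, since applying it with $X=\phi_{I'}$ and feeding in the inductive hypothesis $\Delta'(\phi_{I'})=(1-u^2)^{n_I-2}\prod_{k=1}^{n_I-1}\Delta'(\phi_{i_{I,k}})$ yields the claim. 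Equivalently, one has to show that the renormalised functional $X\mapsto\Delta'(X)\,(1-u^2)^{1-c(X)}$, where $c(X)$ is the number of components of $X$, is multiplicative on the skein $S(A)$; note that multiplication in $S(A)$ stacks the two factors in disjoint layers, so $c$ is additive there.

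First I would prove the two-factor identity geometrically. Stacking $X$ and $\phi_m$ in two disjoint nested layers of the thickened annulus while the meridional unknot $U$ pierces a common meridian disc exhibits $X\cdot\phi_m\cup U$ in $S^3$ as the connected sum, along the common component $U$, of the links $X\cup U$ and $\phi_m\cup U$: the connect-sum sphere is the frontier of a ball separating the two layers, and it meets $U$ in a single trivial arc. The behaviour of the multivariable Alexander polynomial under connected sum along a component is standard: the two summand complements are glued along an annular neighbourhood of $U$, so Mayer--Vietoris for the Alexander modules gives that $\Delta$ of the connected sum is the product of the two summand polynomials times one extra factor depending only on the variable of $U$. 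In the normalisation of $\Delta'$ fixed by Theorem \ref{MVAthm} this factor is exactly $1-u^2$; for the pieces $\phi_m\cup U$ actually occurring here --- which are the $(2,2m)$-torus links --- it can alternatively be read off from the Torres-type computation already carried out, via Lemma \ref{ratMVA} and the Remark following it. Iterating over the $n_I-1$ gluings produces $(1-u^2)^{n_I-1}$.

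An alternative route, staying closer to the rest of the paper, is to use that the meridian maps $t_i\colon S(A)\to\mathbb{Z}[a^{\pm 1}](q)$ are algebra homomorphisms (established above, following \cite{MoL}), so that $t_i(\phi_I)=\prod_{k=1}^{n_I}t_i(\phi_{i_{I,k}})$ holds on the nose. Passing to the specialisation $a=1$, $q=v$, $s=u/v$ that turns $t_i$ into $\Delta'$ (compare Theorem \ref{MVAthm}), one tracks the power of $(a-a^{-1})$ carried by this specialisation: since $\langle c_i\rangle$ has a simple zero at $a=1$, the reduced invariant $t_i$ of a $c$-component link acquires exactly one such factor per component beyond the first, and dividing the exact multiplicativity of $t_i$ by the appropriate powers recovers precisely the $(1-u^2)^{n_I-1}$-corrected multiplicativity of $\Delta'$.

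The one genuinely delicate point in either approach --- and the part I expect to occupy most of the write-up --- is pinning down the normalisation of $\Delta'$ so that the correction is exactly $1-u^2$, with no stray monomial and no extra power of $v$; this is the same normalisation bookkeeping already settled for $2$-component links in Theorem \ref{MVAthm} and Lemma \ref{ratMVA}. Once it is in place, the block (Mayer--Vietoris) structure of the computation is routine, the base values $\Delta'(\phi_m)$ being exactly the $(2,2m)$-torus-link specialisations supplied by Lemma \ref{ratMVA}.
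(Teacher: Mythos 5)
Your first approach is genuinely different from the paper's and, in outline, correct. The paper invokes Theorem 1 of \cite{Mor}, which presents $\Delta'(\phi_I)$ as the characteristic polynomial $\det(I-u^2B(\phi_I))$ of a reduced-Burau-type matrix built from a braid representative; since $\phi_I$ is a disjoint union of braids, $B(\phi_I)$ is block diagonal apart from $n_I-1$ trivial rows, and Laplace expansion peels off the factor $(1-u^2)^{n_I-1}$ directly. You instead observe that stacking $X$ and $\phi_m$ in nested shells of the solid torus and closing with the meridional unknot $U$ realizes $(X\cdot\phi_m)\cup U$ as a connected sum $(X\cup U)\#_U(\phi_m\cup U)$: the separating sphere is the annulus $S^1\times\gamma$ over the arc $\gamma$ dividing the two shells, capped off by two meridian discs of the complementary solid torus, and it meets $U$ in exactly two points. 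Combined with the standard connected-sum formula for the multivariable Alexander polynomial (in Conway normalization $\nabla(L_1\#_U L_2)=(u-u^{-1})\,\nabla(L_1)\nabla(L_2)$, i.e.\ $1-u^2$ up to a unit), this gives the lemma by induction. This is more conceptual than the paper's matrix computation but, as you note, leaves the precise normalization of the factor to be pinned down, and it also relies on the connected-sum identity which the paper sidesteps by going straight to Morton's determinantal formula. Either route is legitimate; the paper's has the advantage of producing the exact monomial-free factor $(1-u^2)^{n_I-1}$ mechanically, at the cost of quoting \cite{Mor}.

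Your second route, however, is circular as written: the assertion that the specialization $a=1,\ q=v,\ s=u/v$ ``turns $t_i$ into $\Delta'$'' is precisely the content of Theorem \ref{MVAthm}, which this lemma is a step in proving. (There is also a minor technical wrinkle that $t_i$ itself is a function of $a,q$ and the discrete index $i$; to even speak of substituting $s=u/v$ one must first interpolate over $i$, which is exactly the color stability statement under discussion. And at $a=1$ both $\langle\psi_i(X)\rangle$ and $\langle c_i\rangle$ vanish, so ``passing to the specialization'' requires the very factoring whose order you are trying to determine.) The lemma must be proved intrinsically as a statement about $\Delta'$, as both the paper's argument and your first approach do.
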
 
\begin{proof}
At the expense of perhaps changing the label $v^2$ into $v^{-2}$ on some components $\phi_{i_{I,k}}$ we may assume that they are coherently oriented and hence $\phi_I$ can be written as a braid closure. The multivariable Alexander polynomial of a braid closure together with its axis can be computed via Theorem 1 in \cite{Mor}.
There $\Delta'(\phi_I)$ is presented as characteristic polynomial $\det(I-u^2 B(\phi_I))$ of a matrix $B(\phi_I)$ which is inductively built from a braid representative for $\phi_I$. It is easy to see that since $\phi_I$ is a disjoint union of $n_I$ braids, the matrix $B(\phi_I)$ has $n_I-1$ rows containing a single entry $1$ and zeros elsewhere. Removing all such rows (and the corresponding columns) via Laplace expansion, we get $\det(I-u^2 B(\phi_I)) = (1-u^2)^{n_I-1} \det(I-u B')$ where $B'$ is of block diagonal form and the blocks are exactly the matrices $B(\phi_{i_{I,k}})$.  
\end{proof}

Using the lemma we have:
\begin{align*}
&(1-u^2)\Delta(L')= (1-u^2)\sum_{I\in A} a_I \Delta'(\phi_I)= \sum_{I\in A} a_I (1-u^2)^{n_I} \prod_{k=1}^{n_I} \Delta'(\phi_{i_{I,k}}) \\
&= \sum_{I\in A} a_I  \prod_{k=1}^{n_I} (1-u^2)\Delta'(\phi_{i_{I,k}}) = \sum_{I\in A} a_I \prod_{k=1}^{n_I} t_i(\phi_{i_{I,k}})  = \sum_{I\in A} a_I t_i(\phi_I)= P^{st}(L',U)  
\end{align*}
Here the key step is that \[ (1-u^2)\Delta'(\phi_{i_{I,k}}) = P^{st}(\phi_{i_{I,k}}\cup U , U) = t_i(\phi_{i_{I,k}})\]
since $\phi_{i_{I,k}}$ linked with a meridional unknot is a $(2,2i_{I,k})$ torus link, for which theorem holds by Lemma \ref{ratMVA}.
\end{proof}

\subsection{Comparison with multivariable link invariants from Lie superalgebras}
Geer, Patureau-Mirand and Turaev, \cite{GP1}, \cite{GP2}, \cite{GPT} define multivariable polynomial link invariants using modified Reshetikhin-Turaev invariants for the Lie superalgebras $\mathfrak{sl}_{m|n}$. We give a brief review of this construction and how it is related to color stability of colored HOMFLY polynomials. 

Reshetikhin-Turaev $\mathfrak{sl}_{m|n}$ invariants are invariants of (framed) oriented tangles labelled by irreducible representations of the Lie superalgebra $\mathfrak{sl}_{m|n}$. They can be defined in a similar way as described in section \ref{RTi} by scanning the tangle in generic position from bottom to top and associating certain maps of $\mathfrak{sl}_{m|n}$ representations to cups, caps and crossings. While the representation theory of $\mathfrak{sl}_{m|n}$ is richer and more complicated than the representation theory of $\mathfrak{sl}_N$, it turns out that for most (to be precise: for so-called \emph{typical}) colorings, the resulting colored link invariants are trivial. The reason for this is that the quantum dimension of these $\mathfrak{sl}_{m|n}$ representations, and thus the invariants of unknots colored by such representations, are zero. 

The solution to this problem, as described in detail in \cite{GP2}, is to cut one component of the link $L$ open and consider it as an oriented two-ended tangle $T_\lambda$, where $\lambda$ indicates the representation on the open strand. The Reshetikhin-Turaev invariant of $T_\lambda$ is a multiple $x(T_\lambda) Id_{\lambda}$ of the identity map on the representation $\lambda$. Here $x(T_\lambda)$ lives in the ground ring $\mathbb{C}[[h]][h^{-1}]$ and $h$ is related to the familiar variable $q$ by $q=\exp(h/2)$. We can think of $x(T_\lambda)$ as an invariant of $L$ that is reduced with respect to the opened link component. Geer and Patureau-Mirand then show that there exist `fake quantum dimensions' $d(\lambda)\in \mathbb{C}[[h]][h^{-1}]$ of representations $\lambda$ that can be used to `fake unreduce' the invariant in a non-trivial\footnote{As noted earlier, unreducing with respect to actual quantum dimensions produces trivial invariants.} way:

\begin{thm} (Theorem 1 in \cite{GP2}) The map $L\mapsto F'(L):= d(\lambda) x(T_\lambda)$ is independent of the choice of cut component and typical representation $\lambda$ and, hence, is a well-defined framed colored link invariant.
\end{thm}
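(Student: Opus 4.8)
The plan is to deduce everything from functoriality of the Reshetikhin--Turaev functor $F$ together with one structural fact about the fake dimensions $d$, namely that $d$ is the \emph{modified dimension} of an ambidextrous trace on the ideal of typical modules. First I would dispose of the easy parts. If $\lambda$ is typical then $\mathrm{End}(\lambda)$ is one–dimensional, so $F(T_\lambda)=x(T_\lambda)\,\mathrm{id}_\lambda$ for a unique $x(T_\lambda)\in\mathbb{C}[[h]][h^{-1}]$; invariance of $T_\lambda$, and hence of $x(T_\lambda)$, under isotopies fixing the two open ends and under changing \emph{where} along a fixed component the cut is made, is immediate from functoriality of $F$. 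For the same reason, once $F'(L)$ is shown to be independent of the cut, it is automatically a framed link invariant. So the entire content is the claim: if two components of $L$ are coloured by typical modules $V$ and $W$, then $d(V)\,x(T_V)=d(W)\,x(T_W)$.

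To prove this I would introduce the trace. Let $\mathcal{I}$ be the ideal of $\mathrm{Rep}\,U_h(\mathfrak{sl}_{m|n})$ consisting of objects that are retracts of $X\otimes P$ with $P$ typical, and for $V\in\mathcal{I}$ let $\mathrm{t}_V\colon\mathrm{End}(V)\to\mathbb{C}[[h]][h^{-1}]$ be the linear functional with $\mathrm{t}_V(\mathrm{id}_V)=d(V)$, normalised by $\mathrm{t}_V(f)=d(V)\langle f\rangle$ whenever $\mathrm{End}(V)$ is one–dimensional and $f=\langle f\rangle\mathrm{id}_V$. The two properties I need are cyclicity, $\mathrm{t}_V(gh)=\mathrm{t}_V(hg)$, and the partial–trace identity
\[
d(V)\,\langle\mathrm{ptr}_W(g)\rangle \;=\;\mathrm{t}_{V\otimes W}(g)\;=\; d(W)\,\langle\mathrm{ptr}_V(g)\rangle
\]
for $V,W\in\mathcal{I}$ and $g\in\mathrm{End}(V\otimes W)$, where $\mathrm{ptr}$ denotes the categorical partial trace coming from the ribbon structure (the left/right ambiguity being absorbed into the word ``ambidextrous'').

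Granting these, the invariance is formal. I would isotope $L$ so that cutting \emph{both} the $V$– and the $W$–coloured components produces a single tangle $\widehat T$ with $g:=F(\widehat T)\in\mathrm{End}(V\otimes W)$, arranged so that re–closing the $W$–strand recovers $T_V$ and re–closing the $V$–strand recovers $T_W$; this is possible because re–closing a cut component is, under $F$, exactly a partial trace, and the two pairs of open ends can always be brought side by side using naturality of $F$. Then
\[
d(V)\,x(T_V)=d(V)\,\langle\mathrm{ptr}_W(g)\rangle=\mathrm{t}_{V\otimes W}(g)=d(W)\,\langle\mathrm{ptr}_V(g)\rangle=d(W)\,x(T_W),
\]
and since any typical–coloured component can be paired with any other in this way, $F'(L)$ does not depend on the chosen cut.

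The step I expect to be the real obstacle is the ambidexterity / partial–trace identity above — equivalently, the very existence of fake dimensions $d$ compatible with both partial traces. This is not formal: it uses that in $\mathrm{Rep}\,U_h(\mathfrak{sl}_{m|n})$ typical modules are projective and the full subcategory they generate is semisimple, and then a non–degeneracy argument — test $g$ against a universal ``open Hopf link'' morphism, compute the resulting scalar explicitly, and use that $\mathbb{C}[[h]][h^{-1}]$ has no zero divisors (so a nonzero pairing is perfect) to pin down the two partial traces as differing precisely by the ratio $d(V)/d(W)$. This is the technical heart of the Geer--Patureau-Mirand construction; once it is in place, the argument above closes cleanly.
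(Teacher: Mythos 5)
The paper does not prove this theorem; it quotes it verbatim from \cite{GP2} (Theorem 1 there) as background for the comparison with the color stable HOMFLY polynomial, so there is no in-paper argument to measure your proposal against. Judged on its own, your outline essentially reproduces the argument of \cite{GP2}: well-definedness of $x(T_\lambda)$ is Schur's lemma plus functoriality of $F$; independence of the choice of cut component reduces, via the doubly-cut tangle $\widehat T$ with $g=F(\widehat T)\in\mathrm{End}(V\otimes W)$, to the compatibility $d(V)\langle\mathrm{ptr}_W(g)\rangle=d(W)\langle\mathrm{ptr}_V(g)\rangle$; and you correctly isolate the technical heart as the ambidexterity of typical modules, which Geer and Patureau-Mirand verify by the explicit open Hopf link computation you indicate. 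The one stylistic remark worth making is that packaging this compatibility as the partial-trace axiom of a modified trace $\mathrm{t}$ on the ideal $\mathcal{I}$ of projectives is the formalism of later work by Geer, Patureau-Mirand and collaborators (with Kujawa and with Virelizier), not of \cite{GP2} itself, where the fake dimensions and the two partial traces are compared directly. Both routes are mathematically equivalent; yours is the modern repackaging of theirs.
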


A great difference between the representation theories of $\mathfrak{sl}_{m|n}$ and $\mathfrak{sl}_N$ is that in the first case isomorphism classes of finite dimensional irreducible representations come in continuous families. To be more precise, for $\mathfrak{sl}_{m|n}$ they are indexed by $(d,z)\in \mathbb{N}^{m+n-2}\times \mathbb{C}$. Supposing that all colors on a link live in the same continuous family, it turns out that the invariants $F'(L)$ change in a very predictable way under varying these colors in their family:

\begin{thm} (Part of Theorem 2 in \cite{GP2})
\label{GPThm2} Let $L$ be a framed link with $k\geq 2$ components in some order, let $d\in \mathbb{N}^{m+n-2}$ and denote by $L(z_1,\dots, z_k)$ the link $L$ with components colored by the $\mathfrak{sl}_{m|n}$ representations indexed by $(d, z_i)$. 

Then there exists a framing independent invariant $M_{\mathfrak{sl}_{m|n}}^d(L) \in \mathbb{Z}[q^{\pm 1},q_1^{\pm 1}, \dots, q_k^{\pm 1}]$ of $L$ such that (up to renormalization) the following identity of Laurent series in $h$ holds:
\[F'(L(z_1, \dots , z_k)) = M_{\mathfrak{sl}_{m|n}}^d(L)(q, q_1 = q^{z_1}, \dots , q_k = q^{z_k}) \]
for all $z_i$ such that $(d,z_i)$ are typical representations. Note that we identify $q= \exp(h/2)$ and $q^z = \exp(z h /2)$.
\end{thm}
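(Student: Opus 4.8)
The plan is to deduce Theorem \ref{GPThm2} — i.e. the statement just quoted from Geer--Patureau-Mirand — by citing their proof, and use the discussion above to explain how it dovetails with the color stability of colored HOMFLY polynomials established in the present paper. Since the statement is literally \emph{Part of Theorem 2 in \cite{GP2}}, the honest ``proof'' consists of two ingredients: first, recalling why such a polynomial normal form exists, and second, identifying the role of typicality.

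\textbf{Step 1: existence of the multivariable polynomial.} The key input is that for $\mathfrak{sl}_{m|n}$ the finite-dimensional typical irreducible representations in a fixed family are parametrized by a continuous parameter $z\in\mathbb{C}$, and the $R$-matrix entries, (co)evaluation maps and ribbon elements all depend on $z$ only through $q^z=\exp(zh/2)$ in a polynomial (Laurent-polynomial) fashion — this is the statement that the braiding and duality morphisms, written in the weight basis, have matrix coefficients in $\mathbb{Z}[q^{\pm 1},q^{\pm z}]$. I would first recall this from \cite{GP2}: each cup, cap and crossing in a diagram for the opened tangle $T_\lambda$ contributes such coefficients, and the composite $x(T_\lambda)$ is therefore a Laurent polynomial in $q$ and the $q^{z_i}$, once we clear the (fake) normalization. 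Multiplying by the fake dimension $d(\lambda)$, which by the explicit formula in \cite{GP2} is again a rational — in fact after renormalization Laurent-polynomial — expression in $q^{z_i}$, gives $F'(L(z_1,\dots,z_k))$ as the specialization at $q_i=q^{z_i}$ of a fixed element $M^d_{\mathfrak{sl}_{m|n}}(L)\in\mathbb{Z}[q^{\pm1},q_1^{\pm1},\dots,q_k^{\pm1}]$.

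\textbf{Step 2: well-definedness and framing independence.} Here I would invoke Theorem 1 in \cite{GP2} (quoted above), which guarantees that $F'(L)=d(\lambda)x(T_\lambda)$ is independent of which component is cut open and defines a genuine invariant of the framed colored link; framing independence of $M^d$ then follows because the only framing dependence in $F'$ is through ribbon twists, which contribute monomial factors in $q$ and $q^{z_i}$ that can be absorbed into the ``up to renormalization'' clause. Finally, the identity $F'(L(z_1,\dots,z_k))=M^d_{\mathfrak{sl}_{m|n}}(L)(q,q^{z_1},\dots,q^{z_k})$ holding for all $z_i$ with $(d,z_i)$ typical is exactly the statement that a Laurent polynomial is determined by its values on the Zariski-dense set of typical parameters, which is how \cite{GP2} phrases it.

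\textbf{Main obstacle.} Since the result is imported verbatim, there is no real obstacle in \emph{proving} it; the substantive content of this subsection is the \emph{comparison} with colored HOMFLY. The thing I would take care to get right — and which I regard as the genuine point — is matching conventions: identifying the superalgebra $\mathfrak{sl}_{1|1}$ (resp.\ $\mathfrak{sl}_{m|1}$) invariants $M^d_{\mathfrak{sl}_{m|n}}$ with the color stable HOMFLY polynomial $P^{st}(L',U)$ of Proposition \ref{colshiftprop} under a dictionary $a\leftrightarrow q^{m-n}$, $s\leftrightarrow q^{z}$ (so that the continuous color parameter $z$ of \cite{GP2} plays exactly the role of the color $i$ in the colored $\mathfrak{sl}_N$ picture, with $s=q^{i-j}$), and checking that Theorem \ref{MVAthm} recovers the $\mathfrak{sl}_{1|1}$ specialization, i.e.\ the multivariable Alexander polynomial. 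Establishing this dictionary precisely — reconciling the ribbon/framing normalizations, the reduction conventions ($\Lambda^i$-reduced versus cutting a component open), and the substitutions $q^z=\exp(zh/2)$ versus $a=q^N$ — is where the care is needed.
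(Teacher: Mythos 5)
Your proposal is correct and matches the paper's handling of this statement: the paper provides no proof at all, since the theorem is explicitly imported verbatim from \cite{GP2}, and the surrounding discussion is devoted to the dictionary with colored HOMFLY polynomials (Proposition \ref{GPprop} and its corollary), exactly as you identify in your final paragraph. Your added sketch of why the Geer--Patureau-Mirand argument works (Laurent-polynomial dependence of $R$-matrix and duality data on $q^z$, Zariski-density of typical parameters for well-definedness) is a reasonable gloss on their proof and does not conflict with anything in the paper, but it is supplementary --- the paper treats this as a black box.
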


This theorem shows that the modified $\mathfrak{sl}_{m|n}$ Reshetikhin-Turaev invariants have very strong stability properties under shifting colors with respect to the continuous parameters $z_i$. In fact, Geer and Patureau-Mirand show that the color stability captured in $M_{\mathfrak{sl}_{m|n}}^d(L)$ is determined by the color stability of specializations of colored HOMFLY polynomials. In Proposition 3.4 of \cite{GP1} they prove that for certain integer values of $z_i$, the invariant $F'(L(z_1,\dots, z_k))$ agrees up to renormalization with the $a=q^{m-n}$ specialization of the  colored HOMFLY polynomial of $L$ labelled by Young diagrams that are determined by the pairs $(d, z_i)$; for details see the proof of Corollary 3.5 in \cite{GP1}. This together with Theorem \ref{GPThm2} shows:

\begin{prop} \label{GPprop}(Corollary 3.5 in \cite{GP1}) The multivariable link invariants $M_{\mathfrak{sl}_{m|n}}^d(L)$ are determined by and can in principle be computed from $a=q^{m-n}$ specializations of colored HOMFLY polynomials of $L$. 
\end{prop}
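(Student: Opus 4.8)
The plan is to combine the two inputs already isolated in the text: Theorem \ref{GPThm2}, which presents $M_{\mathfrak{sl}_{m|n}}^d(L)$ as a genuine Laurent polynomial in $q, q_1,\dots, q_k$ whose substitutions $q_i = q^{z_i}$ recover $F'(L(z_1,\dots,z_k))$ for all typical parameters $z_i$; and Proposition 3.4 of \cite{GP1}, which identifies $F'(L(z_1,\dots,z_k))$ --- up to an explicit renormalization factor --- with the $a=q^{m-n}$ specialization of the colored HOMFLY polynomial of $L$ labelled by the Young diagrams attached to the pairs $(d,z_i)$, whenever the $z_i$ are chosen among a certain distinguished set of integers. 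The deduction is then a polynomial interpolation argument: a Laurent polynomial in finitely many variables is uniquely determined by its values on any Zariski-dense subset of the torus, and the admissible integer tuples $(z_1,\dots,z_k)$ furnish such a subset.

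Concretely, I would first fix $d\in \mathbb{N}^{m+n-2}$ and record from \cite{GP1} the precise subset $S\subset \mathbb{Z}$ of integers $z$ for which $(d,z)$ is typical and for which Proposition 3.4 applies; this set is cofinite, so $S^k\subset \mathbb{Z}^k$ is Zariski-dense. Next, for each $(z_1,\dots,z_k)\in S^k$ I would invoke Proposition 3.4 to write $F'(L(z_1,\dots,z_k)) = r(z_1,\dots,z_k)\, P_{\underline{\lambda}(z)}(L)(a=q^{m-n},q)$, where $\underline{\lambda}(z)$ is the tuple of Young diagrams determined by the $(d,z_i)$ and $r$ is the explicit renormalization; by Theorem \ref{GPThm2} the left-hand side equals $M_{\mathfrak{sl}_{m|n}}^d(L)(q, q^{z_1},\dots,q^{z_k})$. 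Finally, I would argue that knowing these values for all $(z_1,\dots,z_k)\in S^k$ pins down $M_{\mathfrak{sl}_{m|n}}^d(L)$ uniquely in $\mathbb{Z}[q^{\pm 1},q_1^{\pm 1},\dots,q_k^{\pm 1}]$, and that, conversely, the recipe ``run the colored HOMFLY computation at $a=q^{m-n}$ for the tuple $\underline{\lambda}(z)$, multiply by $r$, then interpolate'' is an effective procedure --- establishing both halves of the statement, \emph{determined by} and \emph{can in principle be computed from}.

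The main obstacle is the interpolation/density step: one must be sure that the distinguished integers $z$ supplied by Proposition 3.4 are genuinely enough. Two points need care. First, the colored HOMFLY polynomial entering the identification is labelled by a Young diagram $\lambda(z)$ that itself depends on $z$, so as $z$ ranges over $S$ one is not evaluating a single HOMFLY polynomial at many points but rather a whole family of them; the content of \cite{GP1} is precisely that these assemble coherently into the specialization pattern $q_i=q^{z_i}$ of the \emph{fixed} Laurent polynomial $M_{\mathfrak{sl}_{m|n}}^d(L)$. Second, one must check that the renormalization factor $r(z_1,\dots,z_k)$ is of sufficiently controlled form (a monomial times unknot normalizations) so that dividing it out preserves Laurent-polynomiality; this is where I would lean most heavily on the explicit formulas in the proof of Corollary 3.5 of \cite{GP1}. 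Granting these, the interpolation is routine, since a nonzero Laurent polynomial cannot vanish on a Zariski-dense set, so its values on $S^k$ determine it.
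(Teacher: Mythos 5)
Your proposal follows the same route as the paper: combine Theorem \ref{GPThm2} (which realizes $F'(L(z_1,\dots,z_k))$ as the substitution $q_i=q^{z_i}$ in the fixed Laurent polynomial $M^d_{\mathfrak{sl}_{m|n}}(L)$) with Proposition 3.4 of \cite{GP1} (which identifies $F'$ at certain integer parameters with $a=q^{m-n}$ specializations of colored HOMFLY polynomials). The paper simply cites these two ingredients and states the conclusion, whereas you additionally spell out the interpolation step that makes the deduction watertight --- a useful elaboration, but not a different proof strategy.
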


We expect that for a link $L$ with an unknot component the invariants $M_{\mathfrak{sl}_{m|1}}^0(L)$ are closely related to the color stable HOMFLY polynomial. In the case of $\mathfrak{sl}_{m|1}$ and $d=0$, the Young diagrams constructed in the proof of Proposition \ref{GPprop} represent exterior powers $\Lambda^i$:

\begin{cor} Let $L$ be a $k$-component link.  Then 
\[M_{\mathfrak{sl}_{m|1}}^0(L)(q, q_1 = q^{1+m-{i_1}},\dots, q_k = q^{1+m-{i_k}})\] agrees with the $a=q^{m-1}$ specialization of the $(\Lambda^{i_1},\dots, \Lambda^{i_k})$-colored HOMFLY polynomial of $L$ up to renormalization.
\end{cor}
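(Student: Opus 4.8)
The plan is to specialise Theorem \ref{GPThm2} and Proposition \ref{GPprop} to $n=1$, $d=0$, and then to unwind the dictionary between typical $\mathfrak{sl}_{m|1}$-colourings and Young diagrams that underlies the proof of Corollary 3.5 in \cite{GP1}. First I would recall the parametrisation of the typical simple modules in a fixed continuous family of $\mathfrak{sl}_{m|n}$-representations by pairs $(d,z)\in\mathbb{N}^{m+n-2}\times\mathbb{C}$, together with the Young diagram $\lambda(d,z)$ obtained when $z$ is an integer placing $(d,z)$ in the polynomial range. Setting $n=1$ and $d=0$ kills the discrete data entirely, so the family is parametrised by the single complex number $z$, and for the relevant integers $\lambda(0,z)$ is a single column, i.e. $\lambda(0,z)=\Lambda^{i}$ with $i=i(z)$. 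Tracking the highest-weight conventions of \cite{GP1} pins down the affine relation $z=1+m-i$, equivalently $i=1+m-z$; this bookkeeping is the only genuinely convention-dependent computation and is where I expect the care to be needed.

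With this dictionary, the statement follows by combining two inputs. On one hand, colouring the $\ell$-th component of $L$ by the module indexed by $(0,z_\ell)$ with $z_\ell=1+m-i_\ell$ produces precisely the $(\Lambda^{i_1},\dots,\Lambda^{i_k})$-coloured link, so Theorem \ref{GPThm2} gives
\begin{align*}
F'(L(z_1,\dots,z_k)) &= M^0_{\mathfrak{sl}_{m|1}}(L)\big(q,\, q_1=q^{z_1},\dots,q_k=q^{z_k}\big)\\
&= M^0_{\mathfrak{sl}_{m|1}}(L)\big(q,\, q_1=q^{1+m-i_1},\dots,q_k=q^{1+m-i_k}\big).
\end{align*}
On the other hand, Proposition 3.4 of \cite{GP1}, as used in the proof of Proposition \ref{GPprop}, identifies $F'(L(z_1,\dots,z_k))$, up to an explicit monomial/scalar renormalisation arising from comparing the modified dimensions $d(\lambda)$ with quantum dimensions and from the substitution $q=\exp(h/2)$, with the $a=q^{m-n}=q^{m-1}$ specialisation of the $(\Lambda^{i_1},\dots,\Lambda^{i_k})$-coloured HOMFLY polynomial of $L$. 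Combining the two statements yields the corollary, with the renormalisation being exactly the one already present in Proposition \ref{GPprop}.

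Two small points need attention. The integers $i_\ell$ must be large enough (equivalently $z_\ell$ negative enough) for $(0,z_\ell)$ to be typical while still lying in the range where it corresponds to the polynomial representation $\Lambda^{i_\ell}$; for the remaining finitely many small values the identity is obtained by continuity, using that $M^0_{\mathfrak{sl}_{m|1}}(L)$ is a genuine Laurent polynomial and that the coloured HOMFLY polynomial is defined for all $\Lambda^{i_\ell}$, so that agreement along the infinite family already established forces agreement for all $i_\ell$. The main obstacle is therefore not conceptual but the precise alignment of the highest-weight and Young-diagram conventions of \cite{GP1} with the HOMFLY normalisation used here; everything else is a direct specialisation of results already cited.
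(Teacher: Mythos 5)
Your proposal is correct and matches the paper's (implicit) argument: the paper derives this corollary as an immediate specialization of Theorem \ref{GPThm2} and Proposition \ref{GPprop} to $n=1$, $d=0$, noting that the Young diagrams in the proof of Corollary 3.5 of \cite{GP1} become single columns $\Lambda^i$ with the dictionary $z=1+m-i$ built into the exponents $q_\ell = q^{1+m-i_\ell}$. Your additional observation that the finitely many non-typical values of $i_\ell$ are handled by polynomiality of $M^0_{\mathfrak{sl}_{m|1}}(L)$ is a sensible supplement to the paper's terse treatment, but does not change the route.
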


The strong color stability properties described by $M_{\mathfrak{sl}_{m|n}}^d$ come at the price that (unlike the color stable HOMFLY polynomial) these invariants don't seem to be stable in super-rank $m-n$. In particular, we cannot expect colored HOMFLY polynomials of arbitrary links to be stable under changing color (in a simple way) without specializing $a=q^{m-n}$; this is already indicated by the decategorification of Conjecture \ref{conjB}. However, it is a very interesting question what information about the large color behaviour of HOMFLY type invariants of general links can be inferred from the color stability of related Lie superalgebra invariants.\\

Finally we want to mention another parallel between  $M_{\mathfrak{sl}_{m|1}}^0(L)$ and the color stable HOMFLY polynomial. Geer and Patureau-Mirand prove that their invariants specialize to the multivariable Alexander polynomial in a similar way as the color stable HOMFLY polynomial, see Theorem \ref{MVAthm}: 

\begin{thm}  (Theorem 3 in \cite{GP1})
The invariants $M_{\mathfrak{sl}_{m|1}}^0(L)$ of a $k$-component link $L$ specialize to the Conway potential function $\nabla(L)\in \mathbb{Q}(t_1, \dots, t_k)$ of $L$, which is a refinement of the multivariable Alexander polynomial:
\[\Delta(L)(q_1^{2 m},\dots, q_k^{2 m}) \sim \nabla(L)(q_1^m,\dots, q_k^m) = e^{\sqrt{-1}(m-1)\pi /2} M_{\mathfrak{sl}_{m|1}}^0(L)(q=  e^{\sqrt{-1}\pi /m}, q_1, \dots, q_k ) \]

Here $\sim$ means equality up to renormalization.  
\end{thm}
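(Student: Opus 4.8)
This is Theorem 3 of \cite{GP1}, quoted here only for comparison; in the paper it would simply be cited. For completeness, here is how one would prove it. The base case $m=1$ is classical: the modified Reshetikhin--Turaev invariant of $U_q(\mathfrak{gl}_{1|1})$ --- equivalently the $\mathfrak{sl}_{m|1}$ construction at super-rank $m-1=0$ --- computes the Conway potential function (Kauffman--Saleur, Rozansky--Saleur; the modified/fake-dimension formalism is due to Murakami and Viro). The content for $m>1$ is that the Laurent polynomial $M^0_{\mathfrak{sl}_{m|1}}(L)\in\mathbb{Z}[q^{\pm 1},q_1^{\pm 1},\dots,q_k^{\pm 1}]$, specialised at the $2m$-th root of unity $\zeta=e^{\sqrt{-1}\pi/m}$ while keeping $q_1,\dots,q_k$ formal, collapses onto $\nabla(L)(q_1^m,\dots,q_k^m)$ up to the monomial $e^{\sqrt{-1}(m-1)\pi/2}$ and renormalisation.

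The plan is to reduce this to a skein computation of the type carried out in Section \ref{conj2}. By Proposition \ref{GPprop}, $M^0_{\mathfrak{sl}_{m|1}}(L)$ interpolates the $a=q^{m-1}$ specialisations of the $(\Lambda^{i_1},\dots,\Lambda^{i_k})$-coloured HOMFLY polynomials of $L$ over integer exponents, with $q_j=q^{1+m-i_j}$. At $q=\zeta$ one has $q^m=-1$, hence $a=q^{m-1}=-q^{-1}$, so the object to analyse is the coloured HOMFLY polynomial restricted to the locus $a=-q^{-1}$, $q=\zeta$. One then checks: (i) on $\Lambda^1$-coloured components the skein relation $aP(L^+)-a^{-1}P(L^-)=(q-q^{-1})P(L^0)$ specialises, up to normalisation, to the skein relation of the Conway function in the substituted variables; (ii) exactly as in Section \ref{conj2}, the specialised invariant descends to the HOMFLY skein of the annulus and is multiplicative on its free generators $\{\phi_j\}$; (iii) higher colours are recovered from $\Lambda^1$ by cabling, and the substitution $t_j=q_j^m$ is the one that makes a $\Lambda^i$-coloured meridional unknot match the Conway normalisation. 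Given (i)--(ii), it suffices to verify the identity on a generating set of the annular skein --- the $(2,2k)$ torus links and the unknot --- which for rational links is Lemma \ref{ratMVA}; multiplicativity then forces equality up to a monomial, and evaluating both sides on the unknot (and on the positive Hopf link to fix the $q_j$-dependence), i.e. comparing the fake quantum dimension at $q=\zeta$ with the Conway normalisation, pins down the factor $e^{\sqrt{-1}(m-1)\pi/2}$.

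The main obstacle is the root-of-unity step. Intrinsically it asks that at a $2m$-th root of unity the even subalgebra $U_\zeta(\mathfrak{sl}_m)\subset U_\zeta(\mathfrak{sl}_{m|1})$ act trivially enough on the typical $(0,z)$-family of modules that the braiding factors through the $U_\zeta(\mathfrak{gl}_{1|1})$ braiding up to a scalar; this is a weight-space and character computation, and once available the reduction to the classical $\mathfrak{gl}_{1|1}$ case is immediate. An alternative avoiding analytic specialisation is to stay entirely on the HOMFLY side: show that the $q$-holonomic family of $\Lambda^i$-coloured HOMFLY polynomials at $a=q^{m-1}$ (Section \ref{1stproof}, \cite{Wed}) satisfies, after $q\mapsto\zeta$, the $q$-holonomic recursion of the Conway function, reducing the whole statement to an identity between two explicit $q$-holonomic systems checked on finitely much initial data.
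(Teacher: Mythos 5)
You are right that this theorem is quoted from \cite{GP1} and is not proved in the paper at hand; there is no proof to compare against, only the citation. Your sketch of how one might prove it, however, has several concrete gaps.

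First, the annulus-skein route you propose as the workhorse (points (i)--(ii), leaning on Theorem~\ref{MVAthm} and Lemma~\ref{ratMVA}) is structurally tied to links of the form $L'=L\cup U$ with $U$ an \emph{unknot} component: the decomposition into $\pi(L)\in S(A)$ and the pairing $t_i$ against a meridional $\Lambda^i$-colored unknot presupposes exactly that. A general $k$-component link with arbitrary colors on every component does not sit in that framework, so step (ii) does not go through as stated; you would need an annular or cabling argument on every component simultaneously, which is a much stronger statement than what Section~\ref{conj2} provides. Second, step (iii), ``higher colours are recovered from $\Lambda^1$ by cabling,'' conflates two different mechanisms. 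The variables $q_j$ in $M^0_{\mathfrak{sl}_{m|1}}$ come from the continuous parameter $z_j$ of the typical module, and they meet HOMFLY only at the lattice points $q_j=q^{1+m-i_j}$; the Conway potential variables $t_j$ are attached to components, not to a cabling degree. The substitution $t_j=q_j^m$ is the desired output of the theorem, not something one may freely impose as a normalization, and asserting it via ``cabling'' hides rather than supplies the argument. Third, specializing $a=q^{m-1}$ and $q=\zeta$ with all colors $\Lambda^1$ produces a single scalar per link, not a function of $q_1,\dots,q_k$; the multivariable content sits entirely in the color lattice, so checking the one-variable HOMFLY skein relation in (i) does not by itself pin down a $k$-variable function. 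Finally, you explicitly defer the genuinely hard point --- that at $q=e^{\sqrt{-1}\pi/m}$ the typical $\mathfrak{sl}_{m|1}$ braiding collapses onto the $\mathfrak{gl}_{1|1}$ braiding up to scalar --- by calling it ``the main obstacle.'' That collapse is not an obstacle to be circumvented; it is essentially the entire content of the theorem as proved in \cite{GP1}, via the identification of the $\mathfrak{gl}_{1|1}$ invariant with the Conway potential function. In short: the blind sketch correctly recognizes the citation and correctly names the crucial representation-theoretic reduction, but the HOMFLY-skein route offered as the main plan would not, as written, establish the stated identity for general links.
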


\end{document}